\documentclass[a4paper,reqno]{amsart}

\usepackage[utf8]{inputenc}
\usepackage[T1]{fontenc}
\usepackage[english]{babel}

\usepackage{amsfonts, color}
\usepackage[margin=3.5cm]{geometry}
\usepackage{todonotes}

\theoremstyle{plain}
\begingroup
\newtheorem{theorem}{Theorem}[section]
\newtheorem{lemma}[theorem]{Lemma}
\newtheorem{proposition}[theorem]{Proposition}
\newtheorem{corollary}[theorem]{Corollary}
\endgroup

\theoremstyle{definition}
\begingroup
\newtheorem{definition}[theorem]{Definition}
\newtheorem{remark}[theorem]{Remark}

\endgroup

\theoremstyle{remark}

\DeclareMathOperator{\tr}{tr}
\newcommand{\al}{\alpha}
\newcommand{\be}{\beta}
\DeclareMathOperator{\Div}{div}
\newcommand{\e}{\epsilon}
\newcommand{\gd}{\gamma_d}
\newcommand{\gn}{\gamma_n}
\newcommand{\Gd}{\Gamma_d}
\newcommand{\HH}{\mathcal{H}}
\newcommand{\la}{\lambda}
\newcommand{\LL}{\mathcal{L}}
\newcommand{\Mtwo}{\mathbb{M}_\mathrm{sym}^{2 \times 2}}
\newcommand{\Mthree}{\mathbb{M}_\mathrm{sym}^{3 \times 3}}
\newcommand{\Mn}{\mathbb{M}_{sym}^{n \times n}}

\newcommand{\om}{\omega}
\newcommand{\Om}{\Omega}
\newcommand{\R}{\mathbb{R}}
\newcommand{\wto}{\rightharpoonup}
\renewcommand{\doteq}{:=}
\newcommand{\eps}{\epsilon}
\allowdisplaybreaks

\frenchspacing 

%\mathsurround=1pt
\mathchardef\emptyset="001F
\numberwithin{equation}{section}

\title[Existence and regularity]
{Existence and regularity of solutions for an evolution model of perfectly plastic plates \\}
\author[P. Gidoni, G.B. Maggiani,  and R. Scala]{P. Gidoni, G.B. Maggiani,  and R. Scala}

\address[P. Gidoni]{Centro de Matemática, Aplicações Fundamentais e Investigação Operacional (CMAF-CIO), Universidade de Lisboa,\\
Campo Grande, Edifício C6, 1749-016 Lisboa, Portugal
}
\email{pgidoni@fc.ul.pt}

\address[G.B.~Maggiani]{Dipartimento di Matematica, Universit\`a di Pavia, Via Ferrata 1, 27100 Pavia, Italy}
\email{giovannibattis.maggiani01@universitadipavia.it}

\address[R. Scala]{Centro de Matemática, Aplicações Fundamentais e Investigação Operacional (CMAF-CIO), Universidade de Lisboa\\
	Campo Grande, Edifício C6, 1749-016 Lisboa, Portugal
}
\email{rscala@fc.ul.pt}

\begin{document}

\begin{abstract}
We continue the study of a dynamic evolution model for perfectly plastic plates, recently derived in \cite{MaggianiMora} from three-dimensional Prandtl--Reuss plasticity. We extend the previous existence result by introducing non-zero external forces in the model, and we discuss the regularity of the solutions thus obtained. In particular, we show that the first derivatives with respect to space of the stress tensor are locally square integrable.

\end{abstract}

\maketitle
\noindent {\bf Keywords:} Dynamic evolution, perfect plasticity, Prandtl--Reuss plasticity, thin plates, functions of bounded deformation, functions of bounded hessian.\vspace{2pt}

\noindent {\bf Mathematics Subject Classification (MSC2010):}  74C05, 74K20, 49J45.

\section{Introduction}

This paper studies the dynamic evolution  of a thin plate in Prandtl--Reuss plasticity, continuing the investigation undertaken in \cite{MaggianiMora}, where the authors rigorously derived a reduced model describing such situation.
Here, we pursue the analysis of that model by extending the existence result in \cite{MaggianiMora} to the case of nonzero external forces, and proving a regularity result for the stress field and for the vertical component of the displacement. Indeed, we point out that in  \cite{MaggianiMora} the stress is proved to be square integrable with respect to the space variables, without addressing any higher regularity; here we show $H^1_\mathrm{loc}$-regularity. Moreover, we also prove that the vertical component of the displacement field satisfies $H^1_\mathrm{loc}$-regularity in space uniformly in time.

We remark that an analogous investigation has been previously carried out in the simpler case of quasistatic evolution, namely neglecting inertial effects and assuming that the system is in dynamic equilibrium at every time. Indeed, this paper may be seen as the dynamic counterpart of  \cite{DavMor2}, where the authors analyse the stress regularity for the reduced model derived in \cite{DavMor}, describing the quasistatic evolution of a plastic plate. The regularity of the elastic stress is a classical issue in perfect plasticity, investigated for instance also in \cite{Bensoussan}, \cite{Demyanov1}, and \cite{Demyanov3} (see the related results in \cite{BM18}). For general treatment of quasistatic evolutions of rate-independent systems we refer to \cite{MM05,MieRou}.  

\subsubsection*{Formulation of the model} Before to discuss our regularity result, we briefly describe the mechanical model studied in this paper; more details are provided in Sections \ref{preliminaries} and \ref{sec:problem}.  This model was proposed and rigorously justified in \cite{MaggianiMora}, as the limit of a dynamic evolution for a three-dimensional plate, when the thickness tends to zero. We emphasize however that the limit model is purely three-dimensional, because the dependence of the stress on the variable $x_3$ is in general not trivial; a similar situation appears also in the quasistatic setting  (cf.~\cite[Section 5]{DavMor2} for an explicit example).

An elastoplastic plate is described in the reference configuration by the set
$ \Om \doteq \om \times \left( -\frac{1}{2},\frac{1}{2} \right)$, where $\om\subset\R^2$ is an open, bounded set corresponding to the base of the plate. 
The state of the plate at time $t$ is described by the triplet $(u(t),e(t),p(t))$, where $u(t)$ is the \textit{displacement}, $e(t)$ is the \textit{elastic strain} and $p(t)$ is the \textit{plastic strain}.
We assume that the displacement $u(t)$ is of \emph{Kirchhoff--Love} type: namely that $u_3(t)$ is independent of the transverse variable $x_3$ and there exists $\bar{u}(t): \om \to \R^2$ with
$$ u_\al (t)=\bar{u}_\al (t)-x_3 \partial_\al u_3 (t) \quad \mbox{ in } \Om,  \mbox{ for } \al=1,2. $$
As observed in \cite{Ciarlet}, the physical interpretation of this condition is that straight lines normal to the mid-surface remain straight and normal after the deformation, within the first order. 

The evolution of the system is guided by two main elements: a 
time-dependent Dirichlet boundary datum $w(t)$, prescribing the displacement field on a portion $\Gd=\gd\times\left( -\frac{1}{2},\frac{1}{2} \right)$ of the lateral boundary $\partial \om \times \left( -\frac{1}{2},\frac{1}{2} \right)$, and an external force $\mathcal L$, which in turn acts on the body, an will be decomposed as the sum of vertical and horizontal loads $g$ and $f$.

To characterize the dynamic evolution $[0,T]\ni t\rightarrow (u(t),e(t),p(t))$, we require that,  for every  $t \in [0,T]$, the following conditions (cf1)--(cf5) are satisfied.

\begin{itemize}
	\item[(cf1)] \textit{Kinematic admissibility:} 	\begin{align*}
	&Eu(t)=e(t)+p(t) \;\;\text{ in $\Om$,} \\
	&p(t)=(w(t)-u(t)) \odot \nu_{\partial \Om}  \quad \mathcal{H}^2-\text{a.e. on $\Gd$,}\\
	&e_{i3}(t)=p_{i3} (t)=0\;\;\text{ in $\Om$,  for $i=1,2,3$.} 
	\end{align*}
\end{itemize}
Here $E u(t) \doteq \frac{Du(t)+Du^T (t)}{2}$ is the symmetrized gradient of $u$, $\odot$ is the symmetrized tensor product, $\nu_{\partial \Om}$ is the outer unit normal to $\partial \Om$, and $\mathcal{H}^2$ denotes the two-dimensional Hausdorff measure. The third equation expresses the Kirchhoff--Love structure of the displacement. In view of such property, in the following we identify $e$ and $p$ with their nontrivial components taking values in $\Mtwo$.

 Regarding the second equation, one may expect instead the usual boundary condition  $u(t)=w(t)$ on $\Gd$; indeed, this will be our requirement on the approximating evolution problems. Yet, as happens already in the quasistatic setting, the existence of the solution  $(u(t),e(t),p(t))$ to the limit evolution problem is provided only in the product space 
$$ BD(\Om) \times L^2(\Om; \Mtwo) \times M_b (\Om \cup \Gd; \Mtwo), $$
where $BD(\Om)$ is the space of functions with bounded deformation in $\Om$, and $M_b (\Om \cup \Gd; \Mtwo)$ denotes the space of bounded Borel measures on $\Om \cup \Gd$. For this reason the boundary condition has to be formulated in a weak sense, as above. The mechanical interpretation of the second equation in (cf1) is that $u(t)$ may not attain the boundary condition and, if this is the case, a plastic slip of strength proportional to $w(t)-u(t)$ develops along $\Gd$.

	\begin{itemize}
	\item[(cf2)] \textit{Constitutive law:} $$\sigma (t) =\mathbb{C}_r e(t),$$ where $\mathbb{C}_r$ is the elasticity tensor.
	\item[(cf3)] \textit{Evolution equations:} given $\bar{\sigma}$ and $\hat{\sigma}$ be respectively the stretching and the bending components of $\sigma$ (cf.~Definition \ref{definitionmomentE}), we have $$-\Div \bar{\sigma}(t)=f(t)\text{ in }\Om$$ and $$\ddot{u}_3 (t)-\frac{1}{12} \Div \Div \hat{\sigma}(t)=g(t)\text{ in }\Om,$$ together with corresponding homogeneous Neumann boundary conditions on $\partial \om \setminus \gd$, corresponding to absent boundary tractions. 
	\item[(cf4)] \textit{Stress constraint:} $$\sigma (t) \in K_r \doteq \lbrace \xi \in \Mtwo: \vert \xi \vert_r \leq \al_0 \rbrace, $$
	where
	$$ \vert \xi \vert_r \doteq \sqrt{\vert \xi \vert -\frac{1}{3}(\tr\xi)^2} \quad \mbox{ for every } \xi \in \Mtwo, $$
	 so that $K_r$ is a convex, compact ellipsoid in $\Mtwo$ containing the origin (cf.~Section~\ref{preliminaries}).
\end{itemize}	
This assumption, adopted also in the quasistatic setting in \cite{DavMor2}, follows from the restriction, in the original three dimensional problem, to the case of von Mises yield criterion  (see, e.g., \cite{Lubliner}), namely assuming that the set of admissible stresses  is a cylinder $B_{\al_0}+\mathbb{R} I_{3 \times 3}$, with $B_{\al_0}$ being the ball of radius $\al_0$ centered at the origin of the space of trace-free $\Mthree$ matrices and $I_{3 \times 3}$ being the identity matrix in $\Mthree$.
\begin{itemize}	
	\item[(cf5)] \textit{Flow rule:} $\dot{p}(t)$ belongs to the normal cone to $K_r$ at $\sigma (t)$.
\end{itemize}
It is easy to see that condition (cf5) can be written in an equivalent way as
\begin{itemize}
	\item[(cf5')] \textit{Maximum dissipation principle:} $$H_r (\dot{p}(t))=\sigma (t): \dot{p}(t),$$ where $H_r$ is the support function of $K_r$ (for a precise definition see \eqref{supportfunction}).
\end{itemize}
 We remark that, for the lack of spatial regularity in $p(t)$ mentioned above, this expression  has to be read with the following precise sense. The left-hand side is defined using the theory of convex functions of measures as
$$ \int_{\Om \cup \Gd} H_r \left( \frac{d p(t)}{d \vert p (t) \vert} \right) d \vert p(t) \vert,  $$ 
where $d p(t)/ d \vert p(t) \vert$ is the Radon--Nikodym derivative of $p(t)$ with respect to its total variation $\vert p(t) \vert$. The right-hand side of (cf5') requires an ad-hoc elastic  stress-plastic strain duality notion, that was originally given in \cite{DavMor}, and is summarized here in Section \ref{preliminaries}.

\subsubsection*{Main result}
The main result of the paper is  Theorem \ref{mainresult}, stating the existence of a solution to the dynamic evolution  model discussed above, and additional regularity for the stress field and third component of the displacement; both these variables are shown to be locally $H^1$ with respect to the space variables. More precisely, we prove that for every open set $\om'$ compactly contained in $\om$ there exists a positive constant $C_1(\om')$ such that, for $\alpha=1,2$,
\begin{equation} \label{stima1}
\sup_{t \in [0,T]} \Vert D_\al \sigma (t) \Vert_{L^2\left( \om' \times \left( -\frac{1}{2},\frac{1}{2} \right) \right)} \leq C_1 (\om'),
\end{equation}
and for every open set $\Om'$ compactly contained in $\Om$  there exists a positive constant $C_2(\Om')$ such that
\begin{equation} \label{stima2}
\sup_{t \in [0,T]} \Vert D_3 \sigma (t) \Vert_{L^2\left( \Om'  \right)} \leq C_2 (\Om').
\end{equation}
Analogous estimates hold for the vertical component $u_3$ of the  displacement $u$.
We observe that the estimate \eqref{stima1} is stronger than \eqref{stima2}, since it is global in the direction $x_3$. We remark that all the arguments used are purely local, thus cannot be used to study the behaviour of stress up to the boundary $\partial \Om$. 

Finally, we recall that also the existence result presents some novelty, due to the presence of general external forces in the bulk, not considered in \cite{MaggianiMora}.

\subsubsection*{Strategy of the proof}
The proof is developed in several steps, which can be resumed into two partial existence results of approximating solutions to $(u,e,p)$, which are obtained considering progressive approximations of the flow rule (cf5). 

The key point is the study of the so-called approximating Norton--Hoff problem, see Theorems \ref{TheoremNHoff} and \ref{TheoremNHoff2}. In these theorems we show that for any natural numbers $N\geq4$ there exists a triplet $(u_N,e_N,p_N)$ solving (cf1)--(cf3) and the condition
\begin{equation}
\dot p_N(t)=D\phi_N(\sigma(t)),
\end{equation}
where $\phi_N$ is a convex potential approximating the indicator function of  $K_r$ as $N\rightarrow+\infty$.
 These results, together with a priori estimates on the norms of the solutions independent of $N$, can be considered of individual interest and are one of the  main novelties of this work.

In turn, these Norton--Hoff problems are studied by considering a second approximation of flow rule, obtained by truncating the potential $\phi_N$ at a certain threshold. The corresponding approximating solutions, obtained in Lemma \ref{Lemmalambda}, depend on a truncation parameter $\lambda>0$, which will be sent to $+\infty$ at the limit. The proof of Lemma \ref{Lemmalambda} is carried out by standard time discretization and using an implicit Euler scheme. 

In Proposition \ref{propositionregular} we recover additional regularity in the Norton--Hoff problems, specifically for the stress and the third component of the displacement. The main result is obtained by letting  $N\rightarrow+\infty$,  showing that the approximating solutions converge to the solution of the original problem (cf1)--(cf5), and that the additional regularity is preserved at the limit.

\subsubsection*{Comparison with the quasistatic case}
As mentioned above, Theorem \ref{mainresult} can be seen as the dynamic correspondent of the regularity result of \cite{DavMor2} for the quasistatic case, and the two approaches present several analogies and a similar overall structure. However, the introduction of inertial effects produces new difficulties, requiring a different proof for the existence of the approximating solutions, and the use of several ad-hoc techniques and part integration formulas. 
In particular we remark that, due to the inertial terms, the kinematic admissibility and the evolution equations are now coupled, so that the argument adopted in the quasistatic context cannot be repeated.

Differences between the quasistatic and dynamic models may be found also in the requirements on the  external data.
In \cite{DavMor2} the boundary Dirichlet datum $w$ is taken in the space $H^1([0,T]; H^1(\Om; \R^3)\cap KL (\Om))$.
In the dynamic framework such regularity is no longer sufficient. Indeed, we need a control also on the second derivative in time of $w$ and the third derivative of $w_3$. Moreover, in order to treat the spatial regularity of the stress we need a local control of the $H^2$ norm of $w$ (see \eqref{regularityw} for the precise assumption). 
Moreover, we have to assume an uniform safe-load condition on the external forces. Again, due to the presence of inertia, we need a control on the velocity and acceleration of the external forces via the safe-load variable. These are stronger hypotheses than the ones in the quasistatic case, but at the presence stage seem necessary in the dynamic one.

\medbreak

The paper is organized as follows. In Section 2 we recall some mathematical preliminaries. In Section 3 we give some mechanical preliminaries and describe the setting of the problem. In Section 4 we present the existence and regularity results for the approximating problems. By Lemma \ref{Lemmalambda} and then Theorem \ref{TheoremNHoff} we prove the existence of solutions to the Norton--Hoff problem. In Proposition \ref{propositionregular} we prove the  a priori estimates for the space derivatives of the stress. Finally we prove that these estimates hold true for the original problem in Theorem \ref{mainresult}, where the existence of a solution to (cf1)--(cf5) is stated.

\section{Mathematical preliminaries and notations}

\subsubsection*{Measures} 
We denote with $\LL^n$ the Lebesgue measure on $\R^n$, and with 
$\HH^{n-1}$ the $(n-1)$-dimensional Hausdorff measure.
For any given  Borel set $B\subset\R^n$, we denote  the space of bounded Borel measures on $B$ with values in a finite dimensional Hilbert space $X$ as $M_b(B;X)$; such space is 
endowed with the norm $\|\mu\|_{M_b} \doteq |\mu|(B)$, where $|\mu|\in M_b(B;\R)$ denotes the variation of the measure $\mu$. 
For every measure $\mu \in M_b(B;X)$, by Lebesgue decomposition Theorem there exist a measure $\mu^a$, absolutely continuous with respect to the Lebesgue measure  $\LL^n$, and a measure $\mu^s$ singular with respect to $\LL^n$, such that $\mu=\mu^a+\mu^s$.  When $\mu^s=0$, we identify $\mu$ with its density with respect to $\LL^n$. 

Moreover, if  $B$ is locally compact in the relative topology, 
by Riesz--Markov Theorem we can identify $M_b(B;X)$ with the topological dual space of $C_0(B;X)$. We recall that 
$C_0(B;X)$ is the space of  continuous functions $f\colon B\to X$ such that, for every $\e>0$, the set $\{ |f| \geq \e \}$ is compact. Using this property, we define the weak$^*$ topology of $M_b(B;X)$. We will denote the dual pairings between measures and continuous functions, as well as between other couples of spaces, as usual, by~$\langle \cdot, \cdot \rangle$.

\subsubsection*{Functions with bounded deformation} 
Let us denote with $\Mn$ the space of $n\times n$ real symmetric matrices. Given an open set $U \subset \R^n$, we define the space $BD(U)$ of functions with bounded deformation as the space whose elements are the functions $u \in L^1(U;\R^n)$ with symmetric gradient (in the sense of distributions) $Eu \doteq \frac12(Du+Du^T)$ belonging to $M_b(U; \Mn)$. 
The space $BD(U)$, endowed with the norm
$$
\| u \|_{BD} \doteq \| u \|_{L^1}+\| Eu \|_{M_b},
$$
is a Banach space. 
We refer to \cite{Temam} for the properties of the space $BD(U)$; we recall here the most relevant for our purposes.

If the set $U$ is bounded with Lipschitz boundary, then the space $BD(U)$ is continuously embedded in $L^{n/(n-1)}(U; \R^n)$ and compactly embedded in $L^p(U; \R^n)$, for every $p<n/(n-1)$. 

A sequence $(u_k)_k$ is said to converge weakly$^*$ in $BD(U)$ to $u$  if $u_k\wto u$ weakly in $L^1(U; \R^n)$ and $Eu_k \wto Eu$ weakly$^*$ in $M_b(U; \Mn)$. Every bounded sequence in $BD(U)$ admits a weakly$^*$ converging subsequence.

Furthermore, to every function $u \in BD(U)$ we can associate a trace, still denoted by $u$, belonging to $L^1(\partial U ; \R^n)$. Given a subset $\Gamma$ of $\partial U$ with positive $\mathcal H^{n-1}$-measure, we have
$$
\| u \|_{BD} \leq C( \| u \|_{L^1(\Gamma)}+ \| Eu \|_{M_b}).
$$
where the constant $C>0$, depends only on $U$ and $\Gamma$

\subsubsection*{Functions with bounded Hessian} 
Given an open set $U\subset \R^n$, we define the space $BH(U)$ of functions with bounded Hessian as the space whose elements are the functions $u \in W^{1,1}(U)$, with Hessian $D^2 u$ (in the sense of distributions) belonging to $M_b(U; \Mn)$. The space $BH(U)$,  endowed with the norm
$$
\| u \|_{BH} \doteq \| u \|_{W^{1,1}}+\| D^2 u \|_{M_b},
$$
is a Banach space. We refer to \cite{Demengel1} for the main properties of $BH(U)$.

According to the regularity of $U$, we can deduce several properties of $BH(U)$
If $U$ has the cone property, then the space $BH(U)$ coincides with the space of functions in $L^1(U)$ whose Hessian belongs to $M_b(U; \Mn)$. If $U$ is bounded with Lipschitz boundary, the space $BH(U)$ can be embedded into $W^{1,n/(n-1)}(U)$. Moreover, if $U$ is bounded with $C^2$ boundary, then  the traces of $u$ and $\nabla u$, still denoted by $u$ and $\nabla u$ are well defined for every $u \in BH(U)$; moreover we have  $u \in W^{1,1}( \partial U)$, $\nabla u \in L^1(\partial U; \R^n)$, and $\frac{\partial u}{ \partial \tau}= \nabla u \cdot \tau \in L^1(\partial U)$ for every $\tau$ tangent vector to $\partial U$. Finally, if $n=2$, then $BH(U)$ embeds into the space $C(\overline{U})$, of continuous functions on $\overline{U}$. 

\subsubsection*{Maximal monotone operators} Let $X$ be a Banach space and let $X'$ be its dual. Let  $A:X \to X'$ be an operator, possibly multivalued, and let $ D(A) \doteq \lbrace x \in X: Ax \neq   \emptyset \rbrace$ be its domain. $A$ is \textit{monotone} if 
\begin{equation*}
\langle y_1-y_2,x_1-x_2 \rangle \geq 0 \mbox{ for every } x_1,x_2 \in D(A) \mbox{ and } y_1 \in A x_1, y_2 \in A x_2. 
\end{equation*}
A monotone operator is said to be \textit{maximal} if it satisfies the following property: if $(x,y) \in X \times X'$ are such that 
\begin{equation*}
\langle y-\eta, x-\xi \rangle \geq 0 \mbox{ for every } \xi \in D(A) \mbox{ and } \eta \in A \xi, 
\end{equation*}
then $y \in Ax$. We also recall the following useful property of maximal monotone operators (see \cite[Chapter II, Lemma 1.3]{Barbu}):
\begin{proposition} \label{mintytrick}
Let $A:X \to X'$ be a maximal monotone operator, possibly multivalued. We assume that
\begin{align*}
&y_k \in Ax_k, \\
&x_k \wto x \mbox{ weakly in } X, \\
&y_k \wto y \mbox{ weakly* in } X', \\
&\limsup_{k \to +\infty} \langle y_k,x_k \rangle \leq \langle y,x \rangle.
\end{align*}
Then $y \in Ax$.
\end{proposition}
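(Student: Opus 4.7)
The plan is to reduce the claim to a direct application of the maximality definition, using the hypothesis to pass to the limit in the monotonicity inequality. Fix an arbitrary test pair $(\xi, \eta) \in X \times X'$ with $\eta \in A\xi$. By monotonicity of $A$ applied to $(x_k, y_k)$ and $(\xi, \eta)$ we have
$$ \langle y_k - \eta, x_k - \xi \rangle \geq 0 \quad \text{for every } k, $$
and the goal is to deduce the limiting inequality $\langle y - \eta, x - \xi \rangle \geq 0$, since this is exactly what maximality requires to conclude $y \in Ax$.

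To pass to the limit, I would expand the pairing as
$$ \langle y_k - \eta, x_k - \xi \rangle = \langle y_k, x_k \rangle - \langle y_k, \xi \rangle - \langle \eta, x_k \rangle + \langle \eta, \xi \rangle. $$
The three terms involving either $\xi$ or $\eta$ alone are handled by the assumed weak and weak$^*$ convergences: $\langle y_k, \xi\rangle \to \langle y, \xi\rangle$ because $y_k \wto y$ weakly$^*$ in $X'$ and $\xi\in X$ is fixed, while $\langle \eta, x_k\rangle \to \langle \eta, x\rangle$ because $x_k \wto x$ weakly in $X$ and $\eta \in X'$ is fixed. The remaining term $\langle y_k, x_k\rangle$ involves the pairing of two weakly converging sequences, which in general does not pass to the limit; here, however, we are only seeking an inequality, and the fourth hypothesis $\limsup_k \langle y_k, x_k\rangle \leq \langle y, x\rangle$ is tailored exactly to control this term from above.

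Taking $\limsup$ on both sides of the expanded identity therefore yields
$$ 0 \leq \limsup_{k\to +\infty} \langle y_k - \eta, x_k - \xi\rangle \leq \langle y, x\rangle - \langle y, \xi\rangle - \langle \eta, x\rangle + \langle \eta, \xi\rangle = \langle y - \eta, x - \xi \rangle. $$
Since $(\xi, \eta)$ was an arbitrary element of the graph of $A$, the defining inequality for maximality is satisfied by $(x, y)$, and maximality of $A$ gives $y \in Ax$. The only point requiring a little care is the asymmetry between the assumed $\limsup$ bound and the equality one would hope for: but since the monotonicity inequality only needs to be preserved as a one-sided bound, $\limsup$ suffices, and no further refinement is needed. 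There is no real obstacle here; the statement is essentially a packaging of the standard Minty trick in a form convenient for the applications later in the paper.
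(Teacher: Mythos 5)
Your argument is correct and is precisely the standard Minty trick; the paper does not give its own proof of this proposition but simply cites \cite[Chapter~II, Lemma~1.3]{Barbu}, and the argument given there is the same one you reconstruct. The only step that deserved a second look, namely splitting $\limsup\langle y_k-\eta, x_k-\xi\rangle$ into the three convergent terms plus $\limsup\langle y_k,x_k\rangle$, is handled correctly since the convergent part contributes its limit to the $\limsup$, and the remaining term is controlled from above by the hypothesis.
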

For the general properties of maximal monotone operators we refer, e.g.,  to \cite{Barbu} and  \cite{Brezis}.

\section{Mechanical preliminaries and the setting of the problem}\label{preliminaries}

\subsubsection*{Reference configuration.}
In the reference configuration, we consider a thin plate described by the set $\Om \doteq \om \times \left(-\frac{1}{2},\frac{1}{2} \right)$, where $\om$ is  an open, connected and bounded set with  boundary of class $C^2$. We denote the points in  $\Om$ as $x=(x',x_3)$, where $x'=(x_1,x_2)\in\omega$ and $x_3\in \left(-\frac{1}{2},\frac{1}{2} \right)$. 

We partition the boundary $ \partial \om$ into two disjoint open subsets $\gd,\gn$, with common boundary $\partial_{| \partial \omega} \gd$; we write $\Gamma_d:=\gamma_d\times\left(-\frac{1}{2},\frac{1}{2} \right)$ and $\Gamma_n:=\gamma_n\times\left(-\frac{1}{2},\frac{1}{2} \right)$. Moreover we assume that 
$\mathcal H^1(\gamma_d)>0$ and $\partial_{| \partial \omega} \gd=\{ P_A,P_B \}$, where $P_A$ and $P_B$ are two points of $\partial \omega$.

\subsubsection*{Symmetric matrices.}
Let $\Mtwo$ be the vectorial space of symmetric $2\times2$-matrices.
Let $\vert \cdot \vert_r$ be an anisotropic norm on $\Mtwo$ given by
$$ \vert \xi \vert_r \doteq \sqrt{\vert \xi \vert^2-\frac{1}{3}\left( \tr \xi \right)^2} \mbox{ for every } \xi \in \Mtwo.$$
This norm satisfies
\begin{equation} \label{normarequiv}
\frac{1}{\sqrt{3}} \vert \xi \vert_r  \leq \vert \xi \vert \leq \vert \xi \vert_r  \mbox{ for every } \xi \in \Mtwo.
\end{equation}
The scalar product associated to $\vert \cdot \vert_r$ is
$$ \left( \xi, \zeta \right)_r \doteq \xi: \eta-\frac{1}{3}\tr \xi \tr \zeta\;\; \mbox{ for every } \;\xi,\eta \in \Mtwo. $$
A direct computation shows that the dual norm $\mid \cdot \mid_*$ of $\mid \cdot \mid_r$ is
\begin{equation} \label{dualnorm}
\vert \xi \vert_* \doteq \sup_{\vert \eta \vert_r \leq 1} \xi : \eta = \sqrt{\vert \xi \vert^2+\left( \tr \xi \right)^2} \mbox{ for every } \xi \in \Mtwo.
\end{equation}
Moreover, it is easy to check that for every $\xi \in \Mtwo$
\begin{eqnarray} \label{normadualnormar}
&\vert \xi \vert_* = \vert \xi + (\tr\xi)I \vert_r, \\  
& \vert \xi \vert_r = \vert \xi -\frac{1}{3} (\tr\xi)I \vert_* \label{normarnormadual}. 
\end{eqnarray}

\subsubsection*{The elasticity  tensor and its inverse.} 

We denote by $\mathbb{C}_r$ the \textit{elasticity tensor}, which we recall to be a symmetric positive definite linear operator $\mathbb{C}_r: \Mtwo \to \Mtwo$. Morever we denote by $\mathbb{A}_r: \Mtwo \to \Mtwo$ its inverse $\mathbb{A}_r \doteq \mathbb{C}_r^{-1}$. It follows that there exist two constants $\alpha_\mathbb{A}$ and $\beta_\mathbb{A}$, with $0 < \alpha_\mathbb{A} \leq \beta_\mathbb{A}$,  such that
\begin{equation} \label{Acoercivity}
\al_\mathbb{A} \vert \xi \vert_r^2 \leq \frac{1}{2} \mathbb{A}_r \xi: \xi \leq \be_\mathbb{A} \vert \xi \vert_r^2  \mbox{ for every } \xi \in \Mtwo. 
\end{equation}
In particular it holds
\begin{equation} \label{Acoercivity2}
 \vert \mathbb{A}_r \xi \vert \leq 2 \be_\mathbb{A} \vert \xi \vert \mbox{ for every } \xi \in \Mtwo. 
\end{equation}

\subsubsection*{Dissipation potential.}

Let $K$ be a closed convex set of $\mathbb M^{3\times3}_\text{sym}$ such that there exist two constants $r_H$ and $R_H$, with $0 < r_K \leq R_K$, such that
$$ \lbrace \xi \in \mathbb M^{3\times3}_\text{sym}: \vert \xi \vert \leq r_K \rbrace \subseteq K \subseteq\lbrace \xi \in \mathbb M^{3\times3}_\text{sym}: \vert \xi \vert \leq R_K \rbrace. $$
The set $K_r\subset\Mtwo$ represents the set of admissible stresses in the reduced problem and can be characterised as follows:
\begin{equation} \label{daKaKridotto}
\xi \in K_r \quad \Leftrightarrow \quad \begin{pmatrix}
\xi_{11} & \xi_{12} & 0 \\
\xi_{12} & \xi_{22} & 0 \\
0 & 0 & 0
\end{pmatrix}-\frac{1}{3} (\tr\xi)I_{3 \times 3} \in K,
\end{equation}
(see \cite[Section~3.2]{DavMor}).
In particular we note that if 
$$ K= \left\lbrace \xi \in \Mthree: \vert \xi-\frac{1}{3} \tr \xi I_{3 \times 3} \vert \leq \al_0 \right\rbrace $$
for some $\al_0>0$, then \eqref{daKaKridotto} gives
\begin{equation} \label{formaKr}
K_r \doteq \left\lbrace \xi \in \Mtwo: \vert \xi \vert_r \leq \al_0 \right\rbrace.
\end{equation}
We define the set
$$ \mathcal{K}_r (\Om) \doteq \lbrace \sigma \in L^2(\Om; \Mtwo): \sigma (x) \in K_r \mbox{ for a.e. } x \in \Om \rbrace . $$

The \textit{plastic dissipation potential} is given by the support function of $K_r$, namely $H_r\colon \Mtwo \to [0,+\infty)$  defined as
\begin{equation} \label{supportfunction}
 H_r (\xi) \doteq \sup_{\sigma \in K_r} \sigma : \xi \;\;\;\;\mbox{ for every } \xi \in \Mtwo.
\end{equation}
It follows that $H_r$ is convex and positively one-homogeneous and there are two constants $0<r_H<R_H$ such that 
\begin{equation}
r_H \vert \xi \vert \leq H_r (\xi) \leq R_H \vert \xi \vert\;\;\;\; \mbox{ for every } \xi \in \Mtwo.
\end{equation}
Therefore $H_r$ satisfies the triangle inequality 
\begin{equation}
H_r (\xi+\zeta) \leq H_r (\xi)+H_r (\zeta)\;\;\;\; \mbox{ for every } \xi,\zeta \in \Mtwo.
\end{equation}
With this definition it is easy to deduce the important property
$$K_r=\partial H_r(0),$$
where $\partial H_r(0)$ is the classical subdifferential of $H_r$ at the origin.

\subsubsection*{Kirchhoff--Love admissible triples} 
We now focus on the admissible configurations of the plate. Regarding the displacement $u$, we require 
it to be in the set $KL(\Omega)$ of Kirchhoff--Love displacements:
$$
KL(\Omega) \doteq \{ u \in BD(\Omega): \ (Eu)_{i3}=0, \ i=1,2,3 \}.
$$
We have the following alternative characterization of Kirchhoff--Love displacements:  $u\in KL(\Omega)$ if and only if $u_3 \in BH(\omega)$ and there exists $\bar{u} \in BD(\omega)$ such that
$$
u_\alpha(x) =\bar{u}_\alpha (x')-x_3 \partial_\alpha u_3(x') \quad \mbox{ for } x=(x',x_3) \in \Omega, \ \alpha=1,2.
$$
The terms $\bar{u}$, $u_3$ are called the \textit{Kirchhoff--Love components} of $u$. 

For any given $w \in H^1(\Omega;\R^3) \cap KL(\Omega)$, used to prescribe the Dirichlet boundary condition, we define the set $\mathcal{A}_{KL}(w)$ of \textit{Kirchhoff--Love admissible triplets},  as the class of all triplets
$$
(u,e,p) \in KL(\Omega) \times L^2(\Omega; \mathbb M^{3\times3}_\text{{\rm sym}}) \times M_b (\Omega \cup \Gd; M^{3\times3}_\text{{\rm sym}})
$$
such that
$$
\begin{array}{c}
Eu=e+p \quad \mbox{ in } \Omega, \qquad
p=(w-u)\odot \nu_{\partial \Omega}\HH^2 \quad \mbox{ on } \Gd,\medskip\\
e_{i3}=0 \quad \mbox{ in } \Omega, \qquad p_{i3}=0 \quad \mbox { in } \Omega \cup \Gd, \qquad i=1,2,3.
\end{array}
$$
 In view of this latter property, in the following, given $(u,e,p)\in \mathcal{A}_{KL}(w)$,
we will always identify $e$ with a function in $L^2(\Omega; \Mtwo)$ and $p$ with a measure in $M_b(\Omega \cup \Gd; \Mtwo)$.

To provide a useful characterisation of admissible triplets in $\mathcal{A}_{KL}(w)$, let us first recall the definitions of zeroth and first order moments.

\begin{definition} \label{definitionmomentE}
Let $f \in L^2(\Omega;\Mtwo)$. We define $\bar{f}$, $\hat{f} \in L^2(\omega;\Mtwo)$ and $f_{\perp} \in  L^2(\Omega; \Mtwo)$ as the following orthogonal  components (with respect to the scalar product of  $L^2(\Omega; \Mtwo)$) of $f$:
$$
\bar{f}(x') \doteq \int_{-\frac{1}{2}}^{\frac{1}{2}} f(x',x_3)\, dx_3, \qquad
\hat{f}(x') \doteq 12 \int_{-\frac{1}{2}}^{\frac{1}{2}}x_3 f(x',x_3)\,dx_3
$$
for a.e.\ $x' \in \omega$, and
$$
f_\perp(x) \doteq f(x)-\bar{f}(x')-x_3\hat{f}(x')
$$
for a.e.\ $x \in \Omega$.
We name $\bar{f}$ the \textit{zero-th order moment} of $f$ and $\hat{f}$ the \textit{first order moment} of~$f$.
\end{definition}

\begin{definition} \label{definitionmomentP}
Let $q \in M_b(\Omega \cup \Gd;\Mtwo)$. We define $\bar{q}$, $\hat{q} \in M_b(\omega \cup \gd;\Mtwo)$ 
and $q_{\perp} \in  M_b(\Omega \cup \Gd;\Mtwo)$ as follows:
$$
\int_{\omega \cup \gd} \varphi: d\bar{q}\doteq \int_{\Omega \cup \Gd} \varphi: dq , \qquad
\int_{\omega \cup \gd} \varphi: d\hat{q} \doteq 12\int_{\Omega \cup \Gd} x_3 \varphi: dq
$$
for every $\varphi \in C_0(\omega \cup \gd; \Mtwo)$,
and
$$
q_{\perp} \doteq q-\bar{q} \otimes \LL^1-\hat{q} \otimes x_3 \LL^1,
$$
where $\otimes$ is the usual product of measures, and $\mathcal L^1$ is the Lebesgue measure restricted to the third component of $\R^3$. We name $\bar{q}$ the \textit{zero-th order moment} of $q$ and $\hat{q}$ the \textit{first order moment} of $q$.
\end{definition}

We are now ready to state the following characterisation of  $\mathcal{A}_{KL}(w)$.

\begin{proposition}[{cf.~\cite[Proposition 4.3]{DavMor}}] \label{charKLterne}
Let $w \in H^1(\Omega;\R^3) \cap KL(\Omega)$ and let $(u,e,p) \in  {KL}(\Omega) \times L^2(\Omega; \Mtwo) \times M_b(\Omega \cup \Gd; \Mtwo)$. Then  $(u,e,p) \in \mathcal{A}_{KL}(w)$ if and only if the following three conditions are satisfied:
\begin{itemize}
\item[(i)] $E\bar{u}=\bar{e}+\bar{p}$ in $\omega$ and $\bar{p}=(\bar{w}-\bar{u}) \odot \nu_{\partial \omega}\HH^1$ on $\gd$;\smallskip
\item[(ii)] $D^2 u_3=-( \hat{e}+\hat{p})$ in $\omega$, $u_3=w_3$ on $\gd$, and $\hat{p}=(\nabla u_3-\nabla w_3) \odot \nu_{\partial \omega}\HH^1$ on $\gd$;\smallskip
\item[(iii)] $p_\perp=-e_\perp$  in $\Omega$ and $p_\perp=0$ on $\Gd$.
\end{itemize}
\end{proposition}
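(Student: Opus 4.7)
My plan is to use the Kirchhoff--Love structure of $u$ and $w$ to rewrite both sides of the admissibility equations as polynomials of degree at most one in $x_3$ plus an orthogonal residue, and then match the resulting components against the moment decompositions of $e$ and $p$ provided by Definitions~\ref{definitionmomentE}--\ref{definitionmomentP}.

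For the bulk identity $Eu = e+p$ in $\Om$, the Kirchhoff--Love form $u_\alpha = \bar u_\alpha - x_3 \partial_\alpha u_3$ gives the planar block $(Eu)_{\alpha\beta} = (E\bar u)_{\alpha\beta} - x_3 \partial^2_{\alpha\beta} u_3$; this is affine in $x_3$ with zero-th moment $E\bar u$, first moment $-D^2 u_3$, and vanishing orthogonal part. Writing $e = \bar e + x_3 \hat e + e_\perp$ and, for the bulk part of $p$, $p = \bar p \otimes \LL^1 + \hat p \otimes x_3 \LL^1 + p_\perp$, and exploiting the $L^2$-orthogonality of these three components (together with the uniqueness of the corresponding measure-theoretic decomposition for $p$), the equation $Eu = e+p$ in $\Om$ splits into the three bulk identities $E\bar u = \bar e + \bar p$, $D^2 u_3 = -(\hat e + \hat p)$, and $p_\perp = - e_\perp$ of (i)--(iii).

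For the boundary identity $p = (w-u) \odot \nu_{\partial \Om}\HH^2$ on $\Gd$, I would first note that on $\Gd$ the outer normal reads $\nu_{\partial \Om} = (\nu_{\partial \om},0)$, so the $(i,3)$-components of $(w-u) \odot \nu_{\partial \Om}$ reduce to $\tfrac 12 (w_3 - u_3) (\nu_{\partial\om})_i$; combined with $p_{i3} = 0$ on $\Gd$ this forces $u_3 = w_3$ on $\gd$. With that trace equality in hand, the planar block becomes the explicit affine function of $x_3$ given by $(\bar w - \bar u) \odot \nu_{\partial\om} - x_3\,\nabla(w_3-u_3) \odot \nu_{\partial\om}$, viewed as an $\HH^2$-measure on $\Gd$. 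Testing against $\varphi(x')$ on $\gd$ and using $\int_{-1/2}^{1/2} dx_3 = 1$, $\int_{-1/2}^{1/2} x_3\,dx_3 = 0$ and $12\int_{-1/2}^{1/2} x_3^2\,dx_3 = 1$ recovers the boundary formulas for $\bar p$ and $\hat p$ in (i) and (ii); the fact that the reconstructed boundary measure coincides with $(\bar p \otimes \LL^1 + \hat p \otimes x_3 \LL^1)|_{\Gd}$ then forces $p_\perp = 0$ on $\Gd$, completing (iii).

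The converse direction runs the same steps backwards: from (i)--(iii), the Kirchhoff--Love reconstructions of $u$ and $w$ together with the moment decompositions of $e$ and $p$ make the bulk identity equivalent to the three bulk equations, and the boundary datum on $\Gd$ is reassembled from the zero-th and first moment formulas on $\gd$ plus the trace equality $u_3 = w_3$. The step I expect to require the most care is the boundary analysis: one must make sense of the Kirchhoff--Love-type moment decomposition of the singular boundary measure $p$ on $\Gd$ and match it consistently with the extensions of $\bar p$ and $\hat p$ from $\gd$ to $\Gd$, which is precisely what the product structure in Definition~\ref{definitionmomentP} is engineered to deliver.
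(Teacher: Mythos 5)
The paper does not prove this proposition: it is stated as an imported result, quoting \cite[Proposition~4.3]{DavMor}, so there is no internal proof to compare your argument against. On its own merits, your proposed proof is correct and is the natural one. For the bulk equation $Eu=e+p$, the Kirchhoff--Love structure gives $Eu = E\bar u\otimes\LL^1 - D^2u_3\otimes x_3\LL^1$, so its zeroth moment is $E\bar u$, its first moment is $-D^2u_3$, and its $\perp$-part vanishes; matching moments yields the bulk parts of (i)--(iii). On $\Gd$, the vanishing of the $(\alpha,3)$-entries of $(w-u)\odot\nu_{\partial\Om}$ (needed because $p$ has been identified with a $\Mtwo$-valued measure) forces $u_3=w_3$ on $\gd$, after which the planar block is explicitly affine in $x_3$ and yields the boundary formulas for $\bar p$, $\hat p$, and $p_\perp=0$; the converse runs the same reasoning backwards.

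One piece of phrasing worth tightening: the split of $Eu=e+p$ into its moment components does not really rest on \emph{$L^2$-orthogonality}, which is unavailable when $p$ is only a bounded measure. What actually makes the split legitimate is the linearity of $q\mapsto(\bar q,\hat q,q_\perp)$ together with the identities $\overline{\hat q\otimes x_3\LL^1}=0$, $\widehat{\bar q\otimes\LL^1}=0$, and $\overline{q_\perp}=\widehat{q_\perp}=0$, which follow directly from $\int_{-1/2}^{1/2}x_3\,dx_3=0$ and $12\int_{-1/2}^{1/2}x_3^2\,dx_3=1$ and are exactly what Definition~\ref{definitionmomentP} is built to deliver. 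You flag the ``uniqueness of the measure-theoretic decomposition'' in a parenthesis, but this, not orthogonality, is the mechanism that carries the measure side of the argument; making that explicit would remove the only loose step.
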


\subsubsection*{Stress-strain duality}  
We define the set $\Sigma(\Omega)$ of \textit{admissible stresses} as
$$
\Sigma(\Omega) \doteq \{ \sigma \in L^\infty(\Omega;\Mtwo): \ \Div\, \bar{\sigma} \in L^2(\omega;\R^2), \ \Div\,\Div\,\hat{\sigma} \in L^2(\omega) \}.
$$

We define the space of \textit{admissible plastic strains} $\Pi_{\Gd}(\Omega)$ as the set of all the measures $p \in M_b(\Omega \cup \Gd; \Mtwo)$ for which there exists a triplet $(u,e,w) \in BD(\Omega) \times L^2(\Omega; \Mtwo) \times 
(H^1(\Omega;\R^3)\cap KL(\Omega))$ such that $(u,e,p) \in  \mathcal{A}_{KL}(w)$.

For any 
 $\sigma \in \Sigma (\Omega)$, since $\bar\sigma\in L^\infty(\omega;\Mtwo)$, the trace $[ \bar{\sigma} \nu_{\partial\omega}] \in L^\infty(\partial\omega; \R^2)$ of its zeroth order moment normal component  can be properly defined as
\begin{equation} \label{traccia0mom}
\langle[ \bar{\sigma} \nu_{\partial \omega}], \psi\rangle
\doteq \int_{\omega} \bar{\sigma} : E \psi \,dx'+\int_{\omega} \Div\,\bar{\sigma} \cdot \psi \,dx'
\end{equation} 
for every $\psi \in W^{1,1}(\omega;\R^2)$. 

Let us denote with $T(W^{2,1}(\omega))$  the space of the  traces on $\partial\om$ of functions in $W^{2,1}(\omega)$, and with $(T(W^{2,1}(\omega)))'$ its dual space. 
Since $\hat{\sigma} \in L^\infty(\omega;\Mtwo)$, also the traces $b_0(\hat{\sigma}) \in ( T(W^{2,1}(\omega)))'$
and $b_1(\hat{\sigma}) \in L^\infty(\partial \omega)$ of the first order moment of any 
$\sigma \in \Sigma (\Omega)$ can be properly defined as

\begin{equation} \label{traccia1mom}
-\langle b_0(\hat{\sigma}),\psi\rangle+\big\langle b_1(\hat{\sigma}), \frac{\partial \psi}{\partial \nu_{\partial \omega}} \big\rangle \doteq 
\int_{\omega} \hat{\sigma}: D^2 \psi\, dx'-\int_\omega \psi\,\Div\,\Div\, \hat{\sigma}\, dx'
\end{equation}
for every $\psi \in W^{2,1}(\omega)$. Moreover, if 
If $\hat{\sigma} \in C^2(\overline{\omega}, \Mtwo)$, we have
$$
\begin{array}{l}
b_0(\hat{\sigma})= \Div\, \hat{\sigma} \cdot \nu_{\partial \omega}+\dfrac{\partial}{\partial \tau_{\partial \omega}} \left( \hat{\sigma} \tau_{\partial\omega} \cdot \nu_{\partial\omega} \right), \smallskip\\
b_1(\hat{\sigma})=\hat{\sigma} \nu_{\partial \omega} \cdot \nu_{\partial \omega},
\end{array}
$$
where $\tau_{\partial \omega}$ denotes the tangent vector to $\partial \omega$.  

We remark that, since $[ \bar{\sigma} \nu_{\partial\omega}] \in L^\infty(\partial\omega; \R^2)$ and  $b_1(\hat{\sigma}) \in L^\infty(\partial \omega)$, the null-trace conditions $[ \bar{\sigma} \nu_{\partial\omega}]=0$ and $b_1(\hat{\sigma})=0$ on $\gn$ are clearly defined. On the other hand, for $b_0(\hat{\sigma})$, we say that $b_0(\hat{\sigma})=0$ on $\gn$ holds if $\langle b_0(\hat{\sigma}),\psi\rangle=0$
for every $\psi\in W^{2,1}(\omega)$ such that $\psi=0$ on~$\gd$.

\begin{definition}
Given any $h \in H^{-\frac{1}{2}}(\partial \om; \R^2)$ and $m=(m_0,m_1) \in H^{-\frac{3}{2}}(\partial \om; \R^2) \times H^{-\frac{1}{2}}(\partial \om) $ we define $\Theta(\gn,h,m)$ as the class of all $\sigma \in \Sigma (\Om)$ such that
$$ \langle [\bar{\sigma} \nu_{\partial \om}]-h,\varphi \rangle=0 $$
for every $\varphi \in H^{\frac{1}{2}}(\partial \om; \R^2)$ with $\varphi=0$ on $\gd$, and 
$$ \langle b_0(\hat{\sigma}) -m_0,\psi_0 \rangle=\langle b_1(\hat{\sigma}) -m_1,\psi_1 \rangle=0 $$
for every $\psi_0 \in H^{\frac{3}{2}}(\partial \om)$ with $\psi=0$ on $\gd$, and  every $\psi_1 \in H^{\frac{1}{2}}(\partial \om)$ with $\psi_1=0$ on $\gd$. 
\end{definition}

As discussed in the introduction, since the plastic strain exists only as a measure, to properly use the stress-strain duality between $\Sigma(\Omega)$ and $\Pi_{\Gd}(\Omega)$ we need first to consider a suitable notion of duality pairing (cf.~\cite{DavMor}).

For every $\sigma \in \Sigma(\Omega)$ and $\xi \in BD(\omega)$, we  define the distribution $[\bar{\sigma}: E \xi]$ on $\omega$ as
$$
\langle [ \bar{\sigma}: E \xi ], \varphi \rangle \doteq -  \int_\omega \varphi \, \Div\,\bar{\sigma} \cdot \xi\, dx'-\int_\omega \  \bar{\sigma}: (\nabla \varphi \odot \xi) \,dx'
$$
where $ \varphi \in C_c^\infty(\omega)$. From \cite[Theorem~3.2]{KohnTemam} it follows that $[ \bar{\sigma}: E \xi ] \in M_b(\omega)$ and its variation satisfies
$$
|[\bar{\sigma}: E \xi ] | \leq \| \bar{\sigma} \|_{L^\infty} | E \xi |\quad \mbox{ in } \omega.
$$
For every $\sigma \in \Sigma(\Omega)$ and $p \in \Pi_{\Gd}(\Omega)$, we set the measure $[\bar{\sigma}: \bar{p} ] \in M_b(\omega \cup \gd)$ as
$$
[\bar{\sigma}: \bar{p} ] \doteq\begin{cases}
[\bar{\sigma}: E\bar{u} ]-\bar{\sigma}: \bar{e} & \mbox { in } \omega, \smallskip \\
[ \bar{\sigma} \nu_{\partial \omega} ] \cdot (\bar{w}-\bar{u})\,\HH^1 & \mbox { on } \gd.
\end{cases}
$$
For every $\sigma \in \Sigma(\Omega)$ and $v \in BH(\omega)$, we set the distribution $[ \hat{\sigma}: D^2 v ]$ on $\omega$ as
$$
\langle [ \hat{\sigma}: D^2 v ], \psi \rangle \doteq \int_\omega \psi v \,\Div\,\Div\, \hat{\sigma} \,dx'-2 \int_\omega \hat{\sigma}: (\nabla v \odot \nabla \psi ) \,dx'-\int_\omega v \hat{\sigma}:D^2 \psi \,dx'
$$
for every $\psi \in C_c^\infty(\omega)$.
From \cite[Proposition~2.1]{Demyanov2} it follows that $[ \hat{\sigma}: D^2 v ] \in M_b(\omega)$ and its variation satisfies
$$
| [ \hat{\sigma}: D^2 v ] | \leq \| \hat{\sigma} \|_{L^\infty} | D^2 v | \quad \mbox{ in } \omega.
$$
Given $\sigma \in \Sigma(\Omega)$ and $p \in \Pi_{\Gd}(\Omega)$, we define the measure $[\hat{\sigma}: \hat{p} ] \in M_b(\omega \cup \gd)$ as
$$
[\hat{\sigma}: \hat{p} ] \doteq\begin{cases}
-[\hat{\sigma}: D^2 u_3 ]-\hat{\sigma}: \hat{e} & \mbox { in } \omega, \smallskip \\
b_1(\hat{\sigma})\dfrac{\partial (u_3-w_3)}{\partial \nu_{\partial \omega}}\, \HH^1 &\mbox { on } \gd.
\end{cases}
$$

Finally, we combine the notions above for zeroth and first order moments to define, for every $\sigma \in \Sigma(\Omega)$ and $p \in \Pi_{\Gd}(\Omega)$, the measure $[ \sigma: p ]_r \in M_b(\Omega \cup \Gd)$ as
$$
[ \sigma: p ]_r \doteq [ \bar{\sigma}: \bar{p} ] \otimes \LL^1+ \frac{1}{12} [ \hat{\sigma}: \hat{p} ] \otimes \LL^1-\sigma_\perp:e_\perp.
$$

We are now ready to define the stress-strain duality pairings as
$$
\langle \bar{\sigma}, \bar{p} \rangle \doteq [ \bar{\sigma}: \bar{p}  ] ( \omega \cup \gd ), \qquad
\langle \hat{\sigma}, \hat{p} \rangle \doteq  [ \hat{\sigma}: \hat{p} ](\omega \cup \gd )
$$
and
\begin{equation}\label{dual-def}
\langle \sigma,p \rangle_r \doteq [ \sigma: p ]_r( \Omega \cup \Gd )
= \langle \bar{\sigma}, \bar{p} \rangle +\frac{1}{12}\langle \hat{\sigma}, \hat{p} \rangle -\int_\Omega \sigma_\perp:e_\perp\, dx .
\end{equation}

To conclude, we recall the following integration by parts formula.
\begin{proposition}[{see \cite[Proposition~3.5]{DavMor2}}]
Let $\sigma \in \Sigma(\Omega)$, $w\in H^1(\Omega; \R^3)\cap KL(\Omega)$, and $(u,e,p)\in\mathcal{A}_{KL}(w)$. Then
\begin{eqnarray*}
\lefteqn{\int_{\Omega \cup \Gd} \varphi\, d[ \sigma: p]_r+\int_\Omega \varphi \sigma: ( e-Ew)\, dx} \\
& = & -\int_\omega \bar{\sigma}:(\nabla \varphi \odot (\bar{u}-\bar{w}))\, dx'-\int_\omega \Div\, \bar{\sigma} \cdot \varphi(\bar{u}-\bar{w}) \,dx' 
\\
&& {}+\langle  [ \bar{\sigma} \nu_{\partial \omega}], \varphi(\bar{u}-\bar{w}) \rangle+ \frac{1}{12} \int_\omega \hat{\sigma}:(u_3-w_3)D^2\varphi\, dx'  
\\
&& {}+\frac{1}{6} \int_\omega \hat{\sigma}:(\nabla \varphi \odot (\nabla u_3 -\nabla w_3)) \,dx'- \frac{1}{12} \int_\omega \varphi(u_3-w_3) \Div\, \Div\, \hat{\sigma}\, dx' 
\\
&& {}+\frac{1}{12} \langle b_0 (\hat{\sigma}), \varphi(u_3-w_3) \rangle-\frac{1}{12} \langle b_1 (\hat{\sigma}), \frac{\partial (\varphi(u_3-w_3))}{\partial \nu_{\partial \omega}} \rangle
\end{eqnarray*}
for every $\varphi \in C^2(\overline{\omega})$.
\end{proposition}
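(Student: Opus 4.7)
My approach is to decompose the duality measure via \eqref{dual-def},
\[
\int_{\Om\cup\Gd}\varphi\,d[\sigma:p]_r = \int_{\om\cup\gd}\varphi\,d[\bar\sigma:\bar p] + \tfrac{1}{12}\int_{\om\cup\gd}\varphi\,d[\hat\sigma:\hat p] - \int_\Om\varphi\,\sigma_\perp:e_\perp\,dx,
\]
treat the stretching, bending, and transverse contributions separately, and combine them at the end with the moment decomposition of $\int_\Om\varphi\sigma:(e-Ew)\,dx$. Since $\varphi=\varphi(x')$ annihilates cross-moment terms and $w\in KL(\Om)$ forces $w_\perp=0$, the latter splits as $\int_\om\varphi\bar\sigma:(\bar e-E\bar w)\,dx'+\tfrac{1}{12}\int_\om\varphi\hat\sigma:(\hat e+D^2 w_3)\,dx'+\int_\Om\varphi\sigma_\perp:e_\perp\,dx$, the transverse piece of which cancels the orthogonal contribution of $[\sigma:p]_r$ immediately.

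For the stretching contribution I substitute $[\bar\sigma:\bar p]=[\bar\sigma:E\bar u]-\bar\sigma:\bar e$ on $\om$ and $[\bar\sigma:\bar p]=[\bar\sigma\nu_{\partial\om}]\cdot(\bar w-\bar u)\HH^1$ on $\gd$, and then extend the distributional definition of $[\bar\sigma:E(\bar u-\bar w)]$ from test functions in $C_c^\infty(\om)$ to $\varphi\in C^2(\overline{\om})$ by an approximation argument based on \eqref{traccia0mom}. This yields the identity
\[
\int_\om\varphi\,d[\bar\sigma:E(\bar u-\bar w)] = \langle[\bar\sigma\nu_{\partial\om}],\varphi(\bar u-\bar w)\rangle - \int_\om\Div\bar\sigma\cdot\varphi(\bar u-\bar w)\,dx' - \int_\om\bar\sigma:(\nabla\varphi\odot(\bar u-\bar w))\,dx',
\]
delivering the first three terms of the right-hand side, while the residual $-\bar\sigma:(\bar e-E\bar w)$ cancels the zeroth-moment piece of $\int_\Om\varphi\sigma:(e-Ew)\,dx$ after invoking $E\bar u=\bar e+\bar p$ from Proposition~\ref{charKLterne}(i). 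The bending contribution is handled by the analogous substitution $[\hat\sigma:\hat p]=-[\hat\sigma:D^2 u_3]-\hat\sigma:\hat e$ on $\om$ together with the $b_1(\hat\sigma)$ term on $\gd$, combined with the extended version of \eqref{traccia1mom} applied to $\psi=\varphi(u_3-w_3)$. Expanding $D^2(\varphi(u_3-w_3))=(u_3-w_3)D^2\varphi+2\nabla\varphi\odot\nabla(u_3-w_3)+\varphi D^2(u_3-w_3)$ and multiplying by the prefactor $\tfrac{1}{12}$ generates the four interior $\hat\sigma$-terms with their correct $\tfrac{1}{12}$ and $\tfrac{1}{6}$ coefficients, while the boundary side of \eqref{traccia1mom} produces the pairings with $b_0(\hat\sigma)$ and $b_1(\hat\sigma)$; the residual $-\hat\sigma:\hat e$ cancels after invoking $D^2 u_3=-(\hat e+\hat p)$ from Proposition~\ref{charKLterne}(ii).

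The main technical obstacle is the rigorous extension of the distributional identities for $[\bar\sigma:E\xi]$ and $[\hat\sigma:D^2 v]$ from smooth $\xi,v$ and $\varphi\in C_c^\infty(\om)$ to $\xi\in BD(\om)$, $v\in BH(\om)$, and $\varphi\in C^2(\overline{\om})$: this requires a density/approximation argument together with careful passage to the limit in the boundary pairings, exploiting the regularities $[\bar\sigma\nu_{\partial\om}]\in L^\infty(\partial\om;\R^2)$, $b_1(\hat\sigma)\in L^\infty(\partial\om)$, $b_0(\hat\sigma)\in(T(W^{2,1}(\om)))'$, the $C^2$-smoothness of $\partial\om$, and the fact that $\bar w\in H^1(\om;\R^2)$ and $w_3\in H^2(\om)$. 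A secondary bookkeeping subtlety is tracking the algebraic cancellation between the $\gd$-portion of the boundary pairing $\langle[\bar\sigma\nu_{\partial\om}],\varphi(\bar u-\bar w)\rangle$ and the $\gd$-contribution of $[\bar\sigma:\bar p]$ (and similarly for $[\hat\sigma:\hat p]$), so that the eight boundary/interior terms on the right-hand side appear with exactly the right signs and numerical prefactors.
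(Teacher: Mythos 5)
The paper does not actually prove this statement; it is imported verbatim from \cite[Proposition~3.5]{DavMor2}, so there is no internal proof to compare against. That said, your outline is the natural one and, to my knowledge, matches the strategy used in \cite{DavMor2}: split $[\sigma:p]_r$ into zeroth/first moments and transverse part via \eqref{dual-def}; use $\varphi=\varphi(x')$ together with the Kirchhoff--Love structure of $w$ (so that $\overline{Ew}=E\bar w$, $\widehat{Ew}=-D^2w_3$, $(Ew)_\perp=0$) to split $\int_\Om\varphi\sigma:(e-Ew)\,dx$; cancel the perpendicular pieces; and then apply the $BD$ and $BH$ integration-by-parts formulas built into \eqref{traccia0mom}--\eqref{traccia1mom}, via approximation of $\bar u-\bar w$ and $u_3-w_3$ in the respective spaces.

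One caution, however, on the boundary bookkeeping, which you describe as a ``secondary subtlety'' but which is where an error would most likely slip in. Carrying out the $BD$ parts formula gives
\begin{equation*}
\int_\om\varphi\,d[\bar\sigma:E(\bar u-\bar w)] = -\int_\om\varphi\,\Div\bar\sigma\cdot(\bar u-\bar w)\,dx'-\int_\om\bar\sigma:(\nabla\varphi\odot(\bar u-\bar w))\,dx'+\int_{\partial\om}\varphi\,[\bar\sigma\nu_{\partial\om}]\cdot(\bar u-\bar w)\,d\HH^1,
\end{equation*}
and the $\gd$-contribution $\int_\gd\varphi[\bar\sigma\nu_{\partial\om}]\cdot(\bar w-\bar u)\,d\HH^1$ of $[\bar\sigma:\bar p]$ then \emph{annihilates} the $\gd$-part of this boundary integral, leaving only the $\gn$-part; the same happens in the bending piece, where the $\gd$-contribution $\frac{1}{12}\int_\gd\varphi\,b_1(\hat\sigma)\frac{\partial(u_3-w_3)}{\partial\nu_{\partial\om}}\,d\HH^1$ cancels the $\gd$-part of $-\frac{1}{12}\langle b_1(\hat\sigma),\frac{\partial(\varphi(u_3-w_3))}{\partial\nu_{\partial\om}}\rangle$ (using $u_3=w_3$ on $\gd$). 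So the cancellation does not simply ``deliver the terms with the right signs''; it removes the $\gd$-parts entirely, and the boundary pairings that survive in the statement must be read as living on $\gn$. This interpretation is the one that makes Corollary \ref{formulazzaintparti} correct, since $\sigma\in\Theta(\gn,0,0)$ only forces the normal traces to vanish on $\gn$, not on $\gd$. You should state this explicitly and carry it through: otherwise, taking $\langle[\bar\sigma\nu_{\partial\om}],\varphi(\bar u-\bar w)\rangle$ as a pairing over all of $\partial\om$ would leave an uncancelled $\int_\gd\varphi[\bar\sigma\nu_{\partial\om}]\cdot(\bar w-\bar u)\,d\HH^1$ term and the identity would fail whenever the trace of $\bar u$ differs from that of $\bar w$ on $\gd$.
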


Since we shall consider homogeneous Neumann boundary condition for the stress, it is convenient to state the previous result in this setting.
\begin{corollary}\label{formulazzaintparti}
Let $\sigma \in \Sigma(\Omega)$ such that $\sigma\in \Theta(\gamma_n,0,0)$, $w\in H^1(\Omega; \R^3)\cap KL(\Omega)$, and $(u,e,p)\in\mathcal{A}_{KL}(w)$. Then
\begin{align*}
\int_{\Omega \cup \Gd} \varphi\, d[ \sigma: p]_r&+\int_\Omega \varphi \sigma: ( e-Ew)\, dx= -\int_\omega \bar{\sigma}:(\nabla \varphi \odot (\bar{u}-\bar{w}))\, dx'\\
&-\int_\omega \Div\, \bar{\sigma} \cdot \varphi(\bar{u}-\bar{w}) \,dx'+ \frac{1}{12} \int_\omega \hat{\sigma}:(u_3-w_3)D^2\varphi\, dx'\\  
&+\frac{1}{6} \int_\omega \hat{\sigma}:(\nabla \varphi \odot (\nabla u_3 -\nabla w_3)) \,dx'- \frac{1}{12} \int_\omega \varphi(u_3-w_3) \Div\, \Div\, \hat{\sigma}\, dx',
\end{align*}
for every $\varphi \in C^2(\overline{\omega})$.
 
\end{corollary}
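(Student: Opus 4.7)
The plan is to deduce the statement directly from the general integration-by-parts formula of the preceding proposition, by showing that the three boundary pairings on $\partial\omega$ appearing on its right-hand side,
\[
\langle [\bar{\sigma} \nu_{\partial \omega}], \varphi(\bar{u}-\bar{w}) \rangle,\qquad \tfrac{1}{12}\langle b_0 (\hat{\sigma}), \varphi(u_3-w_3) \rangle,\qquad \tfrac{1}{12}\langle b_1 (\hat{\sigma}), \tfrac{\partial (\varphi(u_3-w_3))}{\partial \nu_{\partial \omega}} \rangle,
\]
all vanish under the two structural hypotheses that $\sigma \in \Theta(\gamma_n,0,0)$ and $(u,e,p) \in \mathcal{A}_{KL}(w)$. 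Once this is achieved, the remaining bulk terms are exactly those in the statement.

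The easiest case is the $b_0(\hat\sigma)$ pairing. Admissibility item~(ii) of Proposition~\ref{charKLterne} gives $u_3 = w_3$ as traces on $\gamma_d$, hence the test function $\varphi(u_3 - w_3)$, which belongs to $T(W^{2,1}(\omega))$ thanks to the $BH$-regularity of $u_3$, vanishes on $\gamma_d$. By the very definition of the class $\Theta(\gamma_n,0,0)$, such a test annihilates $b_0(\hat\sigma)$, and the pairing is zero.

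The remaining two pairings are slightly more subtle, and where I expect the main bookkeeping to lie: the traces $\bar{u}-\bar{w}$ and $\partial_\nu(u_3 - w_3)$ need not vanish on $\gamma_d$, precisely because admissibility allows a plastic slip along $\gamma_d$ of strength $\bar w - \bar u$ (item (i)) and a jump in the normal derivative of strength $\nabla u_3-\nabla w_3$ (item (ii)). To handle them, split each pairing into its $\gamma_n$ part, which is immediately zero since $[\bar{\sigma} \nu_{\partial \omega}]$ and $b_1(\hat{\sigma})$ belong to $L^\infty(\partial\omega)$ and vanish on $\gamma_n$ by the Neumann condition, and its $\gamma_d$ part. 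The $\gamma_d$ contribution must be matched against the boundary component of $[\sigma:p]_r$ on $\Gamma_d$, using the explicit formulas $[\bar{\sigma}:\bar{p}] = [\bar\sigma\nu_{\partial\omega}]\cdot (\bar w - \bar u)\mathcal{H}^1$ and $[\hat{\sigma}:\hat{p}] = b_1(\hat\sigma)\tfrac{\partial(u_3-w_3)}{\partial\nu_{\partial\omega}}\mathcal{H}^1$ on $\gamma_d$, together with the product rule $\partial_\nu(\varphi(u_3-w_3)) = \varphi \partial_\nu(u_3-w_3) + (u_3-w_3)\partial_\nu\varphi$ (whose second summand vanishes on $\gamma_d$ since $u_3=w_3$ there). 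This confirms that the proposition's formula reduces, under the Neumann hypothesis, precisely to the identity in the statement; the main obstacle is purely the careful sign and factor bookkeeping induced by the $\otimes \mathcal{L}^1$ structure in the definition of $[\sigma:p]_r$ on $\Gamma_d$.
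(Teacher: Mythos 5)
Your treatment of the $b_0(\hat\sigma)$ pairing is correct, as is the observation that the $\gamma_n$ parts of the remaining two boundary pairings vanish under the Neumann condition. The gap is in the proposed ``matching'' of their $\gamma_d$ parts against the $\Gamma_d$ contribution of $[\sigma:p]_r$: that mechanism cannot close the argument, because both the Proposition and the Corollary carry the \emph{same} left-hand side $\int_{\Omega\cup\Gd}\varphi\,d[\sigma:p]_r$, so the quantity $\int_{\Gd}\varphi\,d[\sigma:p]_r$ is present identically on both sides of what you are trying to prove and is not ``available'' to absorb anything on the right. If you actually carry out the bookkeeping you deferred, you will find, using $u_3=w_3$ on $\gd$ for the product rule, that
\begin{align*}
\langle[\bar\sigma\nu_{\partial\omega}],\varphi(\bar u-\bar w)\rangle\big|_{\gd}
-\tfrac{1}{12}\langle b_1(\hat\sigma),\partial_\nu(\varphi(u_3-w_3))\rangle\big|_{\gd}
= -\int_{\Gd}\varphi\,d[\sigma:p]_r,
\end{align*}
with a sign that makes ``moving it across'' double the term rather than eliminate it.

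The correct reading is that, in the Proposition's identity, the $\gd$ portions of the boundary pairings and the $\Gd$ contribution of the left-hand side are already in compensation with one another; they form an internal accounting device built into the stress--strain duality, not an extra contribution that the Corollary can dispose of. Once this is recognised, the \emph{genuine} boundary contribution in the Proposition lives only on $\gn$, and it is precisely those $\gn$-restrictions of all three pairings that vanish when $\sigma\in\Theta(\gn,0,0)$, giving the Corollary. So the strategy (apply the Proposition, kill the boundary terms using the Neumann hypothesis and the trace equalities from Proposition~\ref{charKLterne}) is exactly what the paper intends, but the step you flag as ``sign and factor bookkeeping'' is not routine and, done the way you suggest, would not go through; you need to argue that the surviving boundary contribution is supported on $\gn$, not cancel it against the left-hand side.
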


\subsection*{The functions $\Phi_N$ and $\Psi_\la$}

Let $N \in \mathbb{N}$, $N \geq 4$ be a fixed parameter and let $\alpha_0>0$. We consider the function $\phi_N: \Mtwo \to [0,+\infty)$ given by
$$ \phi_N(\xi) \doteq \frac{1}{N \alpha_0^{N-1}} \vert \xi \vert_r^N \mbox{ for every } \xi \in \Mtwo. $$
The function $\phi_N$ is convex and of class $C^1$ with differential
$$ D \phi_N(\xi)=\frac{1}{\alpha_0^{N-1}} \vert \xi \vert_r^{N-2} \left( \xi -\frac{1}{3}(\tr \xi ) I_{2 \times 2} \right) \mbox{ for every } \xi \in \Mtwo.$$
For every $\la>0$ we define $\psi_\lambda: \Mtwo \to [0,+\infty)$ as
$$ \psi_\la (\xi) \doteq \frac{1}{N \al_0^{N-1}}(\vert \xi \vert_r^N \wedge \la^N )+\frac{1}{2 \al_0^{N-1}} \la^{N-2}(\vert \xi \vert_r^2-\la^2)^+\mbox{ for every } \xi \in \Mtwo,$$
where $(z)^+:=z\wedge 0$ denotes the positive part of $z$.
 Therefore
$$ D \psi_\la (\xi) =\frac{1}{\al_0^{N-1}} (\vert \xi \vert_r^{N-2} \wedge \la^{N-2} ) \left( \xi -\frac{1}{3} ( \tr \xi) I_{2 \times 2} \right) \mbox{ for every } \xi \in \Mtwo. $$
Hence, it is easy to see that
\begin{eqnarray} \label{Dpsilambdalipschitz}
&\vert D \psi_\la (\xi) \vert \leq \frac{\la^{N-2}}{\al_0^{N-1}} \vert \xi \vert, \\ \label{scalarDpsilambda}
&D \psi_\la  (\xi): \zeta= \frac{1}{\al_0^{N-1}} (\vert \xi \vert_r^{N-2} \wedge \la^{N-2} ) (\xi,\zeta)_r, \\ \label{DpsilambdaNN-1}
&\vert D \psi_\la (\xi) \vert^{N/(N-1)} \leq \frac{1}{\al_0} D \psi_\la (\xi): \xi.
\end{eqnarray}
Note that \eqref{Dpsilambdalipschitz} implies that $D\psi_\la$ is Lipschitz continuous.
%Finally, we denote by $N'$ the conjugate exponent of $N$, namely $N'\doteq \frac{N}{N-1}$. 

Let us also introduce the functions 
$$\Phi_N:L^N(\Om; \Mtwo)\rightarrow [0,+\infty),$$ 
and
$$\Psi_\lambda: L^2(\Om; \Mtwo) \to [0,+\infty),$$ 
defined by
\begin{align}\label{PhiPsi}
 \Phi_N(\eta):=\int_\Om\phi_N(\eta(x))dx\;\;\;\;\;\mbox{ for every } \eta \in L^N(\Om; \Mtwo),\\
 \Psi_\lambda(\eta):=\int_\Om\psi_\lambda(\eta(x))dx\;\;\;\;\;\mbox{ for every } \eta \in L^2(\Om; \Mtwo).\label{PhiPsi2}
\end{align}

\subsubsection*{The conjugate of $\psi_\la$.}

In this subsection we compute the conjugate function of $\psi_\la$. First of all we recall this definition.
\begin{definition}
Let $X$ be a Banach space and let $f: X \to \left( -\infty, +\infty \right] $ with $f \not\equiv +\infty$. The \textit{conjugate function} $f^*: X^* \to \left( -\infty, +\infty \right] $ of $f$ is defined as
\begin{equation*}
f^* (x) \doteq \sup_{y \in X} \left\lbrace \langle x,y \rangle-f(y) \right\rbrace,
\end{equation*}
for every $x \in X^* $.
\end{definition}

We need the following three lemmas.

\begin{lemma}\label{14giugno_a}
 The function $D\psi_\lambda:\Mtwo \rightarrow \Mtwo$ is injective.
\end{lemma}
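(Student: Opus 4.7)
The plan is to exploit the explicit formula
$$ D \psi_\la (\xi) =\frac{1}{\al_0^{N-1}} \bigl( \vert \xi \vert_r^{N-2} \wedge \la^{N-2} \bigr) \bigl( \xi -\tfrac{1}{3} ( \tr \xi) I_{2 \times 2} \bigr), $$
by factoring it as $D\psi_\la(\xi)=c(\xi)\,T\xi$, where $c(\xi)\doteq\al_0^{-(N-1)}(\vert\xi\vert_r^{N-2}\wedge\la^{N-2})\ge 0$ is a scalar and $T\colon\Mtwo\to\Mtwo$ is the linear map $T\xi\doteq\xi-\tfrac13(\tr\xi)I_{2\times 2}$.

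The first step is to check that $T$ is a linear isomorphism of $\Mtwo$ by exhibiting an explicit inverse. A direct computation gives $\tr(T\xi)=\tr\xi-\tfrac23\tr\xi=\tfrac13\tr\xi$, so $T\xi+(\tr(T\xi))I_{2\times 2}=\xi$; hence $T^{-1}\mu=\mu+(\tr\mu)I_{2\times 2}$. Applying $T^{-1}$ to the identity $D\psi_\la(\xi)=D\psi_\la(\eta)$ then reduces the problem to showing that $c(\xi)\xi=c(\eta)\eta$ implies $\xi=\eta$.

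Next I would dispose of the degenerate case. Since $\vert\cdot\vert_r$ is a genuine norm (cf.~\eqref{normarequiv}), $\xi\ne 0$ gives $c(\xi)>0$, and so $c(\xi)\xi=0$ forces $\xi=0$. In particular $D\psi_\la$ vanishes only at the origin, so we may assume $\xi,\eta\ne 0$; then $c(\xi)\xi=c(\eta)\eta$ is a common nonzero vector, which yields $\eta=s\xi$ for some real $s>0$.

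It remains to force $s=1$, and here I expect the only mild obstacle, due to the piecewise structure of $c$. I would split into three cases according to the position of $\vert\xi\vert_r$ and $\vert\eta\vert_r=s\vert\xi\vert_r$ relative to $\la$. If both lie in $[0,\la]$, the equation becomes $\vert\xi\vert_r^{N-2}\xi=s^{N-1}\vert\xi\vert_r^{N-2}\xi$, giving $s^{N-1}=1$ and hence $s=1$ (using $N\ge 4$ and $s>0$); if both exceed $\la$, the common value $c(\xi)=c(\eta)=\al_0^{-(N-1)}\la^{N-2}$ reduces the equation to $\xi=s\xi$, so $s=1$; in the mixed case (say $\vert\xi\vert_r\le\la<\vert\eta\vert_r$) one obtains $\vert\xi\vert_r^{N-2}=s\la^{N-2}$, but $\vert\eta\vert_r>\la\ge\vert\xi\vert_r$ forces $s>1$, whence $\vert\xi\vert_r>\la$, a contradiction. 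In every case $s=1$ and $\xi=\eta$.
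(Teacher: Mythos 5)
Your proof is correct and follows essentially the same route as the paper's, resting on the same factorization $D\psi_\lambda(\xi)=c(\xi)\,T\xi$ with $T\xi=\xi-\tfrac13(\tr\xi)I_{2\times2}$ and the same three-way casework on whether $|\xi|_r$, $|\eta|_r$ fall below or above $\lambda$. The only minor variation is that you strip off the linear factor by applying its explicit inverse $T^{-1}\mu=\mu+(\tr\mu)I_{2\times2}$, reducing to the collinearity relation $c(\xi)\xi=c(\eta)\eta$, whereas the paper extracts $|\xi|_r$ by taking the dual norm $|\cdot|_*$ of both sides and invoking the identity $|T\xi|_*=|\xi|_r$ from~\eqref{normarnormadual}.
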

\begin{proof}
 Let $\xi_1,\xi_2\in \Mtwo$ and assume that
\begin{equation}  \label{dpsilambdaxi1dpsilambdaxi2}
  D\psi_\lambda(\xi_1)=D\psi_\lambda(\xi_2).
\end{equation} 
We will prove that $\xi_1=\xi_2$. Let us distinguish three cases:
 \begin{itemize}
  \item[(i)]  Assume $|\xi_1|_r,|\xi_2|_r\leq\lambda$. Then taking the norm $|\cdot|_*$ of both members of the equality \eqref{dpsilambdaxi1dpsilambdaxi2} and owing to \eqref{normarnormadual},  we infer 
$$|\xi_1|^{N-1}_r=|\xi_2|^{N-1}_r,$$ 
and then, from \eqref{dpsilambdaxi1dpsilambdaxi2}, we obtain 
$$\xi_1-\frac{1}{3} (\tr\xi_1)I_{2\times2}=\xi_2-\frac{1}{3}(\tr\xi_2)I_{2\times2}.$$ 
From this it easily follows that $\xi_1=\xi_2$.
  \item[(ii)] Assume $|\xi_1|_r\leq\lambda<|\xi_2|_r$. In such a case taking the norm $|\cdot|_*$ of both members of \eqref{dpsilambdaxi1dpsilambdaxi2} we infer 
$|\xi_1|^{N-1}_r=\lambda^{N-2}|\xi_2|_r>\lambda^{N-1},$
 which is a contradiction since by hypothesis  $|\xi_1|^{N-1}_r\leq\lambda^{N-1}$.
  \item[(iii)] Assume $|\xi_1|_r,|\xi_2|_r>\lambda$. In such a case we have 
$$\lambda^{N-2}|\xi_1|_r=\lambda^{N-2}|\xi_2|_r$$ 
from which $|\xi_1|_r=|\xi_2|_r$, and then we conclude as in case (i).
 \end{itemize}

\end{proof}
\begin{lemma}\label{14giugno_b}
Let $X$ be a  reflexive Banach space.
 Let $\Psi:X\rightarrow \mathbb R$ be convex and of class $C^1$. If $D\Psi:X\rightarrow X'$ is injective, then $\Psi^*$ is univalued and in particular differentiable, namely $\partial \Psi^*=D\Psi^*$. 
\end{lemma}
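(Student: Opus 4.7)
The plan is to exploit the classical Fenchel duality between $\Psi$ and $\Psi^{*}$. Since $X$ is reflexive and $\Psi$ is convex, continuous, and finite-valued on $X$, the function $\Psi^{*}$ is convex, proper and lower semicontinuous on $X'$, and the biconjugate identity $\Psi^{**} = \Psi$ holds. In particular, one has the well-known equivalence
\[
y \in \partial\Psi(x) \quad \Longleftrightarrow \quad x \in \partial \Psi^{*}(y),
\]
which follows from the equality case in the Fenchel--Young inequality $\Psi(x) + \Psi^{*}(y) \geq \langle y, x\rangle$.

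First I would fix $y \in X'$ and show that $\partial \Psi^{*}(y)$ contains at most one point. Indeed, if $x_1, x_2 \in \partial \Psi^{*}(y)$, the equivalence above gives $y \in \partial \Psi(x_1) \cap \partial \Psi(x_2)$; but $\Psi$ is of class $C^{1}$, hence $\partial \Psi(x_i) = \{D\Psi(x_i)\}$, and so $D\Psi(x_1) = y = D\Psi(x_2)$. The injectivity of $D\Psi$ then forces $x_1 = x_2$, proving that $\partial\Psi^{*}$ is single-valued on its domain.

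Second, I would upgrade single-valuedness to differentiability. For a convex lower semicontinuous function on a Banach space it is a standard fact (see, e.g., \cite{Brezis}) that single-valuedness of the subdifferential at a point $y$ of the interior of $\mathrm{dom}\,\Psi^{*}$ is equivalent to Gâteaux differentiability of $\Psi^{*}$ at $y$, and in that case the unique element of $\partial\Psi^{*}(y)$ coincides with $D\Psi^{*}(y)$. Combined with the previous step this yields the desired identity $\partial\Psi^{*} = D\Psi^{*}$ wherever the subdifferential is nonempty.

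I expect the only subtle point to be the upgrade from univalent subdifferential to Gâteaux differentiability, and the fact that this identification is meaningful only on $\mathrm{dom}\,\partial\Psi^{*}$; outside this set both sides are empty, so the statement trivializes. The reflexivity of $X$ enters only through the biconjugate identity $(\Psi^{*})^{*} = \Psi$, used implicitly to guarantee that the duality between $\partial\Psi$ and $\partial\Psi^{*}$ runs in both directions.
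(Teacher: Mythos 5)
Your proposal is correct and follows essentially the same path as the paper: invoke the Fenchel duality equivalence $y \in \partial\Psi(x) \Leftrightarrow x \in \partial\Psi^*(y)$, use that $\partial\Psi = \{D\Psi\}$ since $\Psi$ is $C^1$, conclude from injectivity of $D\Psi$ that $\partial\Psi^*$ is single-valued, and then pass from single-valuedness to differentiability. The paper's version is terser and states the duality equivalence as a known fact without spelling out the Fenchel--Young argument, and it likewise glosses over the (mildly delicate) passage from single-valuedness of the subdifferential to Gâteaux differentiability, a point you appropriately flag as subtle.
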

\begin{proof}
 It is well-known from convex analysis that $y=D\Psi(x)$ if and only if $x\in\partial \Psi^*(y)$. Suppose that there are two elements $x_1,x_2\in\partial \Psi^*(y)$, then $y=D\Psi(x_1)=D\Psi(x_2)$ implies $x_1=x_2$. This shows that $\partial \Psi^*$ is univalued, then $\Psi^*$ is differentiable and $D  \Psi^*= \partial \Psi^*$.
\end{proof}

\begin{lemma}\label{14giugno_c}
 Let $X$ be a  reflexive Banach space and let $\Psi:X\rightarrow \mathbb R$ be convex, of class $C^1$, with $D \Psi:X\rightarrow X'$ injective, and satisfying the condition
 \begin{equation} \label{coerccond}
\lim_{\|x\|\rightarrow\infty}\frac{\Psi(x)}{\|x\|}= +\infty. 
 \end{equation}
Let $F:X'\rightarrow \mathbb R$ be a convex function of class $C^1$ satisfying the following condition: for all $x\in X$, $y\in X'$ we have that $y=D\Psi(x)$ if and only if $x=DF(y)$. Then $F=\Psi^*+C$ for some constant $C\in\mathbb R$. 
\end{lemma}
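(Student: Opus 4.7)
The goal is to show that $DF$ and $D\Psi^*$ coincide on all of $X'$; from this, a standard one-variable argument will give $F - \Psi^* = C$ for some constant, since a real-valued Fr\'echet-differentiable function with everywhere-vanishing derivative on a Banach space is constant.

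The first step is to establish that $D\Psi \colon X \to X'$ is surjective. Fix $y \in X'$ and consider the functional $x \mapsto \Psi(x) - \langle y, x \rangle$. It is convex and continuous, hence weakly lower semicontinuous, and by the coercivity assumption \eqref{coerccond} it satisfies
$$ \Psi(x) - \langle y, x \rangle \geq \|x\|_X \left( \frac{\Psi(x)}{\|x\|_X} - \|y\|_{X'} \right) \to +\infty $$
as $\|x\|_X \to \infty$. In particular $\Psi^*(y) < +\infty$, so the infimum is finite, and any minimizing sequence is norm-bounded. By reflexivity of $X$ we extract a weakly convergent subsequence $x_n \wto x_0$, and weak lower semicontinuity yields that $x_0$ is a minimizer. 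Since $\Psi$ is Fr\'echet differentiable, the first-order optimality condition at $x_0$ reads $D\Psi(x_0) = y$.

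Next, Lemma \ref{14giugno_b} applied to $\Psi$ gives that $\Psi^*$ is differentiable on $X'$ with $\partial \Psi^* = D\Psi^*$; moreover, the classical Fenchel--Young equality for a convex $C^1$ function yields the equivalence $y = D\Psi(x) \Leftrightarrow x = D\Psi^*(y)$. Combining this with the hypothesis on $F$, namely $y = D\Psi(x) \Leftrightarrow x = DF(y)$, and using the surjectivity just proved, for every $y \in X'$ we can choose $x \in X$ with $y = D\Psi(x)$ and conclude $D\Psi^*(y) = x = DF(y)$. Hence $D\Psi^* \equiv DF$ on $X'$.

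Finally, for arbitrary $y_1, y_2 \in X'$, the scalar function $g(t) \doteq (F - \Psi^*)(y_1 + t(y_2 - y_1))$ is differentiable on $[0,1]$ with $g'(t) = \langle DF - D\Psi^*, y_2 - y_1\rangle \equiv 0$, so $g$ is constant and $F(y_2) - \Psi^*(y_2) = F(y_1) - \Psi^*(y_1)$. This proves $F = \Psi^* + C$. I expect the main obstacle to be the surjectivity step: in the infinite-dimensional setting it cannot be concluded from injectivity of $D\Psi$ alone, and it is precisely here that the superlinear growth \eqref{coerccond} and the reflexivity of $X$ enter in an essential way.
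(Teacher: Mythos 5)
Your proof is correct and follows essentially the same route as the paper's: both arguments show that $DF$ and $D\Psi^*$ agree on the range of $D\Psi$ and then invoke surjectivity of $D\Psi$, which is exactly where the coercivity \eqref{coerccond} and reflexivity are used. The only difference is that the paper delegates the surjectivity step to a citation of \cite[Proposition 2.6, Chapter II]{Barbu}, while you carry it out directly by minimizing $x \mapsto \Psi(x) - \langle y, x\rangle$ via the direct method; your version is a little more self-contained but not substantively different.
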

\begin{proof}
 Since $D\Psi^*$ is univalued by Lemma \ref{14giugno_b}, it holds $y=D\Psi(x)$ if and only if $x=D\Psi^*(y)$. Therefore, if $y=D\Psi(x)$ we also have $x=DF(y)$ and thus $DF(y)=D\Psi^*(y)$. In particular we obtain that $DF$ and $D\Psi^*$ coincide on the range of $D\Psi$. We then conclude if we show that $D\Psi$ is surjective. But this follows from hypothesis \eqref{coerccond} and, e.g., \cite[Proposition 2.6, Chapter II]{Barbu}. 
\end{proof}
We are now ready to compute $\psi_\la^*$.

\begin{proposition} \label{propoconjugate}
Let $\la>0$. The conjugate function of $\psi_\la$ is 
\begin{equation} \label{conjugateofDpsila}
\psi_\la^* (y) = F_\lambda(y):= \frac{N-1}{N} \left( \vert y \vert_*^{\frac{N}{N-1}} \wedge \frac{\la^N}{\al_0^N} \right) +\frac{\al_0^{N-1}}{2 \la_0^{N-2}} \left( \vert y\vert_*^2-\frac{\la^{2N-2}}{\al_0^{2N-2}} \right)^+ 
\end{equation}
for every $y \in \Mtwo$.
\end{proposition}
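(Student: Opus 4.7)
The plan is to apply Lemma~\ref{14giugno_c} with $\Psi = \psi_\la$ and $F = F_\la$. For this I must verify: (a) $\psi_\la$ is convex and $C^1$, with $D\psi_\la$ injective and satisfying the coercivity condition \eqref{coerccond}; (b) $F_\la$ is convex and $C^1$; (c) for every $\xi, y \in \Mtwo$, $y = D\psi_\la(\xi)$ if and only if $\xi = DF_\la(y)$. Once these are established, Lemma~\ref{14giugno_c} gives $F_\la = \psi_\la^* + C$ for some constant, and the value $C = 0$ follows from $F_\la(0) = 0$ together with $\psi_\la^*(0) = -\inf \psi_\la = 0$, using that $\psi_\la \geq 0$ and vanishes at the origin.

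The hypotheses in (a) are straightforward. The function $\psi_\la$ coincides with $\phi_N$ on $\{|\xi|_r \leq \la\}$ and with an affine function of $|\xi|_r^2$ on $\{|\xi|_r > \la\}$, with matching values and derivatives on the sphere $|\xi|_r = \la$; since both pieces are convex and $C^1$, so is $\psi_\la$. Injectivity of $D\psi_\la$ is precisely Lemma~\ref{14giugno_a}, and the lower bound $\psi_\la(\xi) \geq \tfrac{\la^{N-2}}{2\al_0^{N-1}}|\xi|_r^2 - C$ for $|\xi|_r > \la$, combined with \eqref{normarequiv}, yields \eqref{coerccond}. For (b), $F_\la$ depends on $y$ only through $|y|_*$; a direct check shows that its two pieces fit together at the threshold $|y|_* = \la^{N-1}/\al_0^{N-1}$ with matching values and derivatives and that the resulting radial profile has monotone increasing derivative, so it is convex and $C^1$. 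Since $|\cdot|_*$ is a norm, $F_\la$ inherits convexity and $C^1$ regularity in $y$.

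The core step is (c), which I would settle by direct computation using $D|y|_*^2 = 2(y + (\tr y)I)$ and the chain rule. For $|\xi|_r \leq \la$, setting $y = D\psi_\la(\xi) = \al_0^{-(N-1)}|\xi|_r^{N-2}(\xi - \tfrac{1}{3}(\tr\xi)I)$ and applying \eqref{normarnormadual} gives $|y|_* = |\xi|_r^{N-1}/\al_0^{N-1} \leq \la^{N-1}/\al_0^{N-1}$, so $y$ falls exactly in the regime where only the first branch of $DF_\la$ is active; taking the trace of the defining relation to solve for $\tr\xi$ in terms of $\tr y$ and substituting back recovers $\xi$ precisely in the form prescribed by $DF_\la(y)$. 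The complementary case $|\xi|_r > \la$ gives $|y|_* > \la^{N-1}/\al_0^{N-1}$, activates the positive-part term of $DF_\la$, and is treated by the same trace-based inversion. The reverse implication follows either by symmetry of the algebra or, more conceptually, from injectivity of $D\psi_\la$ and continuity of both maps. The main obstacle is the bookkeeping in this last step: one must keep careful track of the correspondence between the primal regimes $|\xi|_r \leq \la$ and $|\xi|_r > \la$ and the dual regimes $|y|_* \leq \la^{N-1}/\al_0^{N-1}$ and $|y|_* > \la^{N-1}/\al_0^{N-1}$, and correctly handle the $\tfrac{1}{3}$-trace corrections that underlie the identity \eqref{normarnormadual} between $|\cdot|_r$ and $|\cdot|_*$.
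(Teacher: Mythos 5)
Your proposal is correct and follows essentially the same route as the paper: you compute $DF_\lambda$ explicitly, verify the two-sided relation $y = D\psi_\lambda(\xi) \Leftrightarrow \xi = DF_\lambda(y)$ by taking the norms $|\cdot|_r$ and $|\cdot|_*$ and using \eqref{normadualnormar}--\eqref{normarnormadual}, then invoke Lemmas~\ref{14giugno_a} and~\ref{14giugno_c} and fix the additive constant by evaluating at the origin. The only addition you make relative to the paper is the explicit sanity check that $\psi_\lambda$ and $F_\lambda$ are convex and $C^1$ at the gluing thresholds and that $\psi_\lambda$ is superlinear — hypotheses that the paper's proof implicitly takes for granted — and a slightly informal but sound remark that the reverse implication in the gradient equivalence follows from the forward one together with injectivity of $D\psi_\lambda$.
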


\begin{proof}
From \eqref{conjugateofDpsila} it can be computed
$$ D F_\la (y)= \al_0 \left( \vert y \vert_*^{\frac{2-N}{N-1}} \wedge \frac{\la^{2-N}}{\al_0^{2-N}}  \right) \left( y + ( \tr y ) I_{2 \times 2} \right) $$
for every $y \in \Mtwo$.
 It is clear that $\psi_\la$ satisfies \eqref{coerccond}.  Let $x,y \in \Mtwo$. We want to prove that  $x=DF_\lambda (y)$ if and only if $y=D \psi_\la (x)$. Then, assume $x=DF_\lambda (y)$, namely,
\begin{equation}
x=
\begin{cases}
\al_0 \left| y \right|  _*^{\frac{2-N}{N-1}} \left( y+(\tr y) I_{2 \times 2} \right) \mbox{ if } \left| y \right|  _* \leq \frac{\la^{N-1}}{\al_0^{N-1}}, \\
\frac{\al_0^{N-1}}{\la^{N-2}} \left( y+(\tr y) I_{2 \times 2} \right), \mbox{ if } \left| y \right|  _* \geq \frac{\la^{N-1}}{\al_0^{N-1}}.
\end{cases}
\end{equation}
We take the norm $\vert \cdot \vert_r$ of this quantity. Owing to \eqref{normadualnormar} we get 
\begin{equation}
\vert x \vert_r=
\begin{cases}
\al_0 \left| y \right|  _*^{\frac{1}{N-1}}  \mbox{ if } \left| y \right|  _* \leq \frac{\la^{N-1}}{\al_0^{N-1}}, \\
\frac{\al_0^{N-1}}{\la^{N-2}} \left| y \right|  _*, \mbox{ if } \left| y \right|  _* \geq \frac{\la^{N-1}}{\al_0^{N-1}}.
\end{cases}
\end{equation}
Inverting this expression we arrive to 
\begin{equation}
\vert y \vert_*=
\begin{cases}
\frac{1}{\al_0^{N-1}} \left| x \right|  _r^{N-1}  \mbox{ if } \left| x \right|  _r \leq \la, \\
\frac{1}{\al_0^{N-1}} \la^{N-2} \left| x \right|  _r ,  \mbox{ if } \left| x \right|  _r \geq \la.
\end{cases}
\end{equation} 
It follows from \eqref{normarnormadual} that this is equivalent to $y=D \psi_\la (x)$, since 
$$a=b+(\tr b) I_{2 \times 2}\;\;\;\text{ if and only if }\;\;\;b=a-\frac13(\tr a) I_{2 \times 2},$$
for any $a,b\in\Mtwo$.
Now, thanks to Lemma \ref{14giugno_a} all the hypotheses of Lemma \ref{14giugno_c} are satisfied. Consequently  $F_\la= \psi_\la^*+C$. Since $F_\la (0)= \psi_\la^*(0)=0$, we have that $C=0$. This concludes the proof.
\end{proof}

As a consequence of Proposition \ref{propoconjugate} we deduce that the conjugate function of $\Psi_\lambda$ in \eqref{PhiPsi2} is
\begin{align}
 \Psi_\lambda^*(\eta):=\int_\Om F_\lambda(\eta(x))dx\;\;\;\;\;\mbox{ for every } \eta \in L^2(\Om; \Mtwo).
\end{align}

\section{The dynamic evolution problem: regularity} \label{sec:problem}

In this section we introduce the approximate problem to the dynamic model. We prove some preliminary lemmas before Theorem \ref{TheoremNHoff} which states the existence of a solution to the Norton--Hoff approximation. Then in Proposition \ref{propositionregular} we study the regularity of the obtained solutions.

The Dirichlet datum  of the problem is realized by a prescribed boundary displacement $w$ with the following regularity
\begin{align} \label{regularityw}
 &w \in W^{2,1}([0,T]; H^1(\Om; \R^3) \cap KL (\Om)) \cap W^{1,1}([0,T]; H^2_{\text{loc}}(\om\times[-\frac12,\frac12];\R^3)),\nonumber\\
 &\mbox{ with } w_3 \in W^{3,1}([0,T];L^2(\om))\cap W^{2,1}([0,T];H^1_{\text{loc}}(\om)).
\end{align} 
Here the space $H^2_{\text{loc}}(\om\times[-\frac12,\frac12];\R^3)$ is  the space of maps $w\in H^1(\Om;\R^3)$ such that for all $\om'\subset\subset\om$ the function $w$ belongs to $ H^2(\om'\times (-\frac12,\frac12);\R^3)$.

The total external load is a function $\mathcal L:[0,T]\rightarrow L^2(\Om;\R^3)$ which we decompose as horizontal and vertical forces $f$ and $g$, namely 
$$\langle \mathcal L(t),\varphi\rangle :=\langle f(t),\bar\varphi\rangle+\langle g(t), \varphi_3\rangle.  $$
We assume that the external forces satisfy the following uniform safe-load condition. Namely, we suppose the existence of $\varrho:[0,T]\rightarrow L^2(\Om;\Mtwo)$ such that
\begin{align}\label{regularityvarrho}
 \varrho\in W^{2,\infty}([0,T];L^\infty(\Om;\Mtwo))\cap L^2([0,T]; W^{2,\infty}_\text{loc}(\om\times[-\frac12,\frac12];\Mtwo)),
\end{align}
and satisfying, for all $t\in[0,T]$, 
\begin{align}\label{regularityvarrho*}
&-\Div \bar\varrho(t)=f(t),\;\;\;\;\;\;\;-\frac{1}{12}\Div\Div \hat\varrho(t)=g(t),\nonumber\\
& |\varrho(t)|_r\leq \al_0(1-\gamma),
\end{align}
for some $\gamma>0$ fixed and independent of $t$. The space $W^{2,\infty}_\text{loc}(\om\times[-\frac12,\frac12];\Mtwo)$ is the subspace of $ W^{1,\infty}(\Om;\Mtwo)$ whose elements belong to $ W^{2,\infty}(\om'\times (-\frac12,\frac12);\Mtwo)$ for all $\om'\subset\subset\om$. Finally we make the following technical assumption on the external vertical force $g$, namely
\begin{align}\label{regularityg}
g \in W^{1,1}([0,T]; L^2(\om)).
\end{align}
Notice that such condition does not follow from the safe-load condition.  We assume for simplicity that there are no external loads on the Neumann boundary $\Gamma_n$.

Finally let us suppose that the initial data $u_0$, $\sigma_0$, and $(v_0)_3$, satisfy the following properties
\begin{align}\label{initialdata}
 &u_0 \in H^1(\Om; \R^3)\cap KL(\Om),\nonumber\\
 &\sigma_0\in L^\infty(\Om; \Mtwo) \text{ is such that }-\Div \bar{\sigma}_0= f(0)\;\text{ and }\; -\frac{1}{12}\Div \Div \hat{\sigma}_0= g(0) \mbox{ in } \om, \nonumber\\
&\sigma_0 \in \mathcal{K}_r(\Om) \cap \Theta(\gn,0,0), \text{ and }\nonumber\\
 &\text{there exists } \hat v_0 \in H^1(\Om;\R^3)\cap KL(\Om)\text{ such that }(v_0)_3=(\hat v_0)_3.
\end{align}
We will denote $ \hat v_0$ simply by $v_0$. Moreover we set $p_0:=Eu_0-\mathbb A_r\sigma_0\in L^2(\Om;\Mtwo)$.

Now we can state and prove the following result.
\begin{lemma} \label{Lemmalambda}
Let $T>0$, $K_r$ be of the form \eqref{daKaKridotto}, assume \eqref{regularityw}, and  that  $f$ and $g$ satisfy \eqref{regularityvarrho}-\eqref{regularityg}. Assume that the initial conditions satisfy \eqref{initialdata}. 
 Then for every integer $N \geq 4$ and $\lambda>0$ the problem
\begin{equation}
\begin{cases} \label{NHLambdaproblem}
\mathbb{A}_r \dot{\sigma} (t)+D \psi_\la(\sigma(t))= E \dot{u} (t) \mbox{ in } \Om, \\
-\Div \bar{\sigma}(t)= f(t), \quad \ddot{u}_3(t)-\frac{1}{12} \Div \Div \hat{\sigma}(t)= g(t) \mbox{ in } \om, \\
\sigma(t) \in \Theta (\gn,0,0), \\
u(t)=w(t) \mbox{ on } \Gd.
\end{cases}
\end{equation}
for a.e. $t\in[0,T]$, has a unique solution 
\begin{align*}
(\sigma^\la, u^\la) \in  H^1([0,T];L^2(\Om; \Mtwo)) \times 
  H^1([0,T]; H^{1}(\Om; \R^3)\cap KL(\Om)),
\end{align*}
with
\begin{equation*}
u_3^\la \in  H^2([0,T]; L^2 (\om)),
\end{equation*}
such that $$(u^\la(0),\sigma^\la(0))=(u_0,\sigma_0) \text{ and }\dot{u}_3^\la(0)=(v_0)_3.$$
Moreover the following estimate holds true
\begin{align} 
&\Vert \dot{u}_3^\la \Vert_{L^\infty(L^2)}^2+\Vert \sigma^\la  \Vert_{L^\infty(L^2)}^2+ \Vert D \psi_\la ( \sigma^\la)  \Vert_{L^{N'}(L^{N'})}^{N'}\leq C,\label{firstlambdaestimate} 
\end{align}
for a constant $C>0$ independent of $\la$ and $N$.
\end{lemma}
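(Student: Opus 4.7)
The plan is to follow the implicit Euler time-discretization announced in the introduction, coupled to a variational formulation of each incremental problem. Fix $n\in\N$, set $\tau=T/n$, $t_k=k\tau$, and $(u^0,p^0,v_3^0)\doteq(u_0,Eu_0-\mathbb A_r\sigma_0,(v_0)_3)$. Inductively, I would define $(u^k,p^k)$ as the unique minimizer of the strictly convex functional
\begin{equation*}
F_k(u,p)\doteq\frac{1}{2\tau^2}\int_\omega(u_3-u_3^{k-1}-\tau v_3^{k-1})^2\,dx'+\frac{1}{2}\int_\Omega\mathbb C_r(Eu-p):(Eu-p)\,dx+\tau\Psi_\lambda^*\!\left(\tfrac{p-p^{k-1}}{\tau}\right)-\int_\Omega(f(t_k)\cdot\bar u+g(t_k)u_3)\,dx
\end{equation*}
over $u\in H^1(\Omega;\R^3)\cap KL(\Omega)$ with $u=w(t_k)$ on $\Gd$ and $p\in L^2(\Omega;\Mtwo)$, then set $\sigma^k\doteq\mathbb C_r(Eu^k-p^k)$ and $v_3^k\doteq(u_3^k-u_3^{k-1})/\tau$. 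Coercivity of $F_k$ follows from the (at least) quadratic growth of $\Psi_\lambda^*$ (Proposition \ref{propoconjugate}), Korn's inequality on the planar part, and the $H^2$-clamped Poincar\'e inequality for $u_3$ combined with the inertial term; strict convexity grants uniqueness. The Euler--Lagrange conditions in $p$ and in $u$ produce respectively the discrete flow rule $(p^k-p^{k-1})/\tau=D\psi_\lambda(\sigma^k)$ (equivalent, via $\sigma^k=\mathbb C_r(Eu^k-p^k)$, to the first line of \eqref{NHLambdaproblem}) and the discrete momentum equations, the natural boundary conditions providing $\sigma^k\in\Theta(\gn,0,0)$.

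For the uniform bound \eqref{firstlambdaestimate}, I would establish a discrete energy identity by testing the constitutive/flow relation against $\sigma^k-\varrho(t_k)$ and the discrete momentum equations against $(u^k-u^{k-1})-(w(t_k)-w(t_{k-1}))$, then summing over $k$. After spatial integration by parts via Corollary \ref{formulazzaintparti}, only terms featuring time-derivatives of $w$ and $\varrho$ remain on the right-hand side, which are controlled using \eqref{regularityw}, \eqref{regularityvarrho}, \eqref{regularityg} after discrete Abel summation-by-parts in time. The key ingredient is a safe-load coercivity: combining \eqref{DpsilambdaNN-1}, \eqref{scalarDpsilambda}, and $|\varrho|_r\le\al_0(1-\gamma)$, one finds pointwise
\begin{equation*}
D\psi_\lambda(\sigma){:}(\sigma-\varrho)\ge \gamma\, D\psi_\lambda(\sigma){:}\sigma-C_\gamma\ge \gamma\al_0|D\psi_\lambda(\sigma)|^{N'}-C_\gamma,
\end{equation*}
where $C_\gamma$ absorbs the contribution of the region $\{|\sigma|_r<\al_0\}$; this is precisely what produces the $L^{N'}$-control in \eqref{firstlambdaestimate}. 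A discrete Gronwall argument then closes the bound, uniformly in $\tau$, $\lambda$, and $N$.

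Finally, I would construct piecewise-affine and piecewise-constant interpolants of the discrete sequence and extract weakly convergent subsequences thanks to the uniform estimates; the linear equations in \eqref{NHLambdaproblem} pass to the limit at once. The nonlinear term $D\psi_\lambda(\sigma)$, being the gradient of the convex potential $\psi_\lambda$, defines a maximal monotone operator, so Proposition \ref{mintytrick} yields the desired identification provided the $\limsup$ inequality
\begin{equation*}
\limsup_\tau\int_0^T\!\!\int_\Omega D\psi_\lambda(\tilde\sigma^\tau){:}\tilde\sigma^\tau\,dx\,dt\le\int_0^T\!\!\int_\Omega\eta{:}\sigma\,dx\,dt
\end{equation*}
is verified; this follows by rearranging the discrete energy identity and using weak lower semicontinuity of the kinetic and elastic energies. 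The initial conditions are preserved thanks to the $H^1$-in-time regularity. Uniqueness is a standard consequence of the monotonicity of $D\psi_\lambda$ combined with the quadratic coercivity of $\mathbb A_r$ and of the inertial term, via Gronwall applied to the difference of two solutions. I expect the main obstacle to be exactly the safe-load estimate of the previous paragraph: one must simultaneously control the dissipative $L^{N'}$-contribution uniformly in $\lambda$ and $N$ (crucial for the subsequent Norton--Hoff limit $N\to\infty$) and the dynamic inertial term, while the discrete summation-by-parts transfers all time-regularity requirements onto the boundary datum $w$ and the safe-load field $\varrho$.
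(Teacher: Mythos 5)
Your proposal follows the same implicit-Euler time discretization, incremental minimization of the same convex functional, discrete flow rule and momentum equations, uniform a priori estimates, interpolation, and passage to the limit via Minty's trick (Proposition \ref{mintytrick}) as the paper's proof; the only variation is in the treatment of the safe-load term, where the paper absorbs $\langle \bar\varrho_k/\alpha_0, D\psi_\lambda(\sigma_k)\rangle$ by Young's inequality while you derive the equivalent pointwise coercivity $D\psi_\lambda(\sigma):(\sigma-\varrho)\geq\gamma\alpha_0|D\psi_\lambda(\sigma)|^{N'}-C_\gamma$. This is essentially the same approach, and the proposal is correct.
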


Moreover we can prove the following additional regularity of solution of the problem \eqref{NHLambdaproblem}.

\begin{lemma}
Let $(u^\la,\sigma^\la,p^\la)$ be the solution of Lemma \ref{Lemmalambda}. Then there exists a constant $C>0$ independent of $\la$ and $N$ such that 
\begin{align} 
&\Vert \dot{\sigma}^\la \Vert_{L^2 (L^2)}^2+ \Vert \ddot{u}_3^\la  \Vert_{L^2 (L^2)}^2 \leq C\label{secondlambdaestimate}.
\end{align}
Moreover, if we assume that  the function $f$ is independent of time $t$, then 
$$(\sigma^\la, u_3^\la) \in  W^{1,\infty}([0,T];L^2(\Om; \Mtwo)) \times  W^{2,\infty}([0,T]; L^2 (\om))$$ and
there exists a constant $C>0$ independent of $\la$ and $N$ such that 
\begin{align} 
&\Vert \dot{\sigma}^\la \Vert_{L^\infty (L^2)}^2+ \Vert \ddot{u}_3^\la  \Vert_{L^\infty (L^2)}^2\leq C\label{secondlambdaestimate*}
\end{align}
\end{lemma}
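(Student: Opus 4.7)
The estimates follow from a coupled a~priori energy bound on the time derivatives $(\dot\sigma^\la, \ddot u_3^\la)$, obtained by formally differentiating the system \eqref{NHLambdaproblem} once in time and testing the resulting flow rule with $\dot\sigma^\la$. Rigorously, one works with the second-order time-difference quotients $\tau_h \sigma^\la(t):= h^{-1}(\sigma^\la(t+h)-\sigma^\la(t))$, using the monotonicity of $D\psi_\la$ in place of the convexity of $\psi_\la$ to discard the dissipation contribution, and then passes to the limit $h\to 0^+$ exploiting the regularity provided by Lemma~\ref{Lemmalambda}.

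Formally, the differentiated equation $\mathbb{A}_r\ddot\sigma + D^2\psi_\la(\sigma)[\dot\sigma]=E\ddot u$ tested with $\dot\sigma$ yields $\tfrac{1}{2}\tfrac{d}{dt}\|\dot\sigma\|_{\mathbb{A}_r}^2 \leq \int_\Om E\ddot u:\dot\sigma$, since $D^2\psi_\la(\sigma)[\dot\sigma]:\dot\sigma\geq 0$. The Kirchhoff--Love decomposition, the differentiated equilibrium $-\Div\bar{\dot\sigma}=\dot f$ and $\dddot u_3-\tfrac{1}{12}\Div\Div\hat{\dot\sigma}=\dot g$, together with the boundary conditions $u^\la = w$ on $\Gd$ (which for the $\la$-solutions also gives $\nabla u_3^\la = \nabla w_3$ on $\gd$) and $\sigma^\la\in\Theta(\gn,0,0)$, lead, after integration by parts in space in the spirit of Corollary~\ref{formulazzaintparti}, to
\begin{equation*}
\int_\Om E\ddot u:\dot\sigma = \int_\om(\ddot{\bar u}-\ddot{\bar w})\cdot\dot f + \int_\om E\ddot{\bar w}:\bar{\dot\sigma} - \int_\om (\ddot u_3 - \ddot w_3)\dddot u_3 + \int_\om(\ddot u_3-\ddot w_3)\dot g -\tfrac{1}{12}\int_\om D^2\ddot w_3:\hat{\dot\sigma}.
\end{equation*}
Writing $\dddot u_3 = \tfrac{d}{dt}(\ddot u_3-\ddot w_3) + \dddot w_3$ and integrating over $[0,t]$ produces, thanks to the sign in front of $\dddot u_3$, the positive quantity $\tfrac{1}{2}\|\ddot u_3(t)-\ddot w_3(t)\|_{L^2}^2$ on the left-hand side, coupled with $\tfrac{1}{2}\|\dot\sigma(t)\|_{\mathbb{A}_r}^2$.

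The main obstacle is the term $\int_\om (\ddot{\bar u}-\ddot{\bar w})\cdot\dot f$: since the horizontal part of the problem carries no inertia, no a~priori pointwise-in-time bound on $\ddot{\bar u}^\la$ is directly available. I would handle it via the safe-load: the relation $-\Div\dot{\bar\varrho} = \dot f$ together with the Neumann-type trace condition on $\gn$ gives $\int_\om (\ddot{\bar u}-\ddot{\bar w})\cdot\dot f = \int_\om E(\ddot{\bar u}-\ddot{\bar w}):\dot{\bar\varrho}$, and integration by parts in time transfers the second time derivative off $\bar u$ onto $\varrho$, using $\ddot\varrho\in L^\infty([0,T];L^\infty(\Om))$ from \eqref{regularityvarrho}. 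The resulting interior integral pairs $E(\dot{\bar u}^\la - \dot{\bar w})$ with $\ddot{\bar\varrho}$; inserting the flow rule $E\dot{\bar u}^\la = \mathbb{A}_r\bar{\dot\sigma}^\la + \overline{D\psi_\la(\sigma^\la)}$ one bounds the $\bar{\dot\sigma}^\la$-contribution by $\|\bar{\dot\sigma}^\la\|_{L^2(L^2)}\|\ddot{\bar\varrho}\|_{L^2(L^2)}$ and absorbs it via Young's inequality, while the $\overline{D\psi_\la(\sigma^\la)}$-contribution is estimated by $\|\overline{D\psi_\la(\sigma^\la)}\|_{L^{N'}(L^{N'})}\|\ddot{\bar\varrho}\|_{L^N(L^N)}$, uniformly in $(\la,N)$ via \eqref{firstlambdaestimate}. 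The endpoint-in-time boundary terms are treated analogously. All remaining contributions on the right-hand side pair quantities in $L^\infty([0,T];L^2)$ (namely $\dot\sigma^\la$ and $\ddot u_3^\la - \ddot w_3$) with data in $L^1([0,T];L^2)$ coming from \eqref{regularityw} and \eqref{regularityg}, so Gronwall's lemma yields a uniform-in-$(\la,N)$ bound on $\|\dot\sigma^\la\|_{L^\infty([0,T];L^2)}^2 + \|\ddot u_3^\la - \ddot w_3\|_{L^\infty([0,T];L^2)}^2$, whence \eqref{secondlambdaestimate} follows by the triangle inequality with $\ddot w_3 \in L^1([0,T];L^2(\om))$.

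For the second statement \eqref{secondlambdaestimate*}, the hypothesis that $f$ is time-independent forces $\dot f \equiv 0$ and the problematic term disappears altogether. The coupled energy inequality is then driven only by the data terms above, so the Gronwall argument produces directly the pointwise-in-time $L^\infty$ bounds on $\dot\sigma^\la$ and $\ddot u_3^\la - \ddot w_3$ claimed in \eqref{secondlambdaestimate*}.
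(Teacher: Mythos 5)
Your plan follows the same structural idea as the paper's proof: differentiate the flow rule once in time, test against $\dot\sigma^\la$, kill the dissipation contribution by monotonicity of $D\psi_\la$, integrate by parts in space using the equations of motion, and handle the $\dot f$-contribution via the safe-load $\varrho$ and an integration by parts in time. The difference in setting (continuous time-difference quotients versus the paper's discrete implicit-Euler scheme, from which \eqref{secondlambdaestimate}--\eqref{secondlambdaestimate*} are obtained as limits of the discrete estimates \eqref{secondprioriestimate} and \eqref{thirdprioriestimate}) is legitimate and not a problem.

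There is, however, a genuine gap in your handling of the time boundary term, and it is precisely the point that forces the paper to distinguish the two cases. After moving $-\Div\dot{\bar\varrho}=\dot f$ and integrating by parts in time you get, besides the interior integral pairing $E(\dot{\bar u}^\la-\dot{\bar w})$ with $\ddot{\bar\varrho}$, the endpoint term $\langle E(\dot{\bar u}^\la(t)-\dot{\bar w}(t)),\dot{\bar\varrho}(t)\rangle$. Inserting $E\dot{\bar u}^\la=\mathbb{A}_r\bar{\dot\sigma}^\la+\overline{D\psi_\la(\sigma^\la)}$, the piece $\langle\mathbb{A}_r\bar{\dot\sigma}^\la(t),\dot{\bar\varrho}(t)\rangle$ is absorbed by the left-hand side via Young, but the piece $\langle\overline{D\psi_\la(\sigma^\la(t))},\dot{\bar\varrho}(t)\rangle$ requires a pointwise-in-time bound on $\|D\psi_\la(\sigma^\la(t))\|_{L^1}$. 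Estimate \eqref{firstlambdaestimate} only gives $\|D\psi_\la(\sigma^\la)\|_{L^{N'}(L^{N'})}\leq C$, i.e., an integral-in-time bound, not a pointwise one; a $\sup_t$ control of $\Psi_\la(\sigma^\la(t))$ is only established later (in Theorem~\ref{TheoremNHoff2}) and only under the hypothesis that $f$ is time-independent. So the phrase ``the endpoint-in-time boundary terms are treated analogously'' is not justified, and Gronwall does not deliver the $L^\infty$-in-time bound you claim for general $f$; if it did, the paper's extra hypothesis for \eqref{secondlambdaestimate*} would be superfluous. The correct fix is what the paper does: integrate the resulting inequality once more in $t$ over $[0,T]$, which converts the problematic term into $\|D\psi_\la(\sigma^\la)\|_{L^1(L^1)}\leq C$ at the price of downgrading the left-hand side to an $L^2$-in-time quantity, yielding exactly \eqref{secondlambdaestimate}. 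Your treatment of the case $f$ constant (where $\dot f\equiv 0$ and the offending term disappears, so Gronwall applies directly to give $L^\infty$-in-time) is correct and matches the paper.
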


\begin{remark}
 From estimate \eqref{firstlambdaestimate} it is possible to show that 
 $$\|D\psi_\la(\sigma^\la)\|_{L^1(L^1)}\leq C,$$
 for a constant $C>0$ independent of $\la$ and $N$. Coupling this with the estimate for $\dot\sigma^\la$ in \eqref{secondlambdaestimate} we infer
 that 
 \begin{align}
 \|u^\la\|_{W^{1,1}(BD)}\leq C, 
 \end{align}
 for a constant $C>0$ independent of $\la$ and $N$. Using the fact that $u^\la$ is a Kirchhoff--Love function we conclude
 \begin{align}\label{sanvalentino}
  \|u^\la\|_{W^{1,1}(L^2)}\leq C.
 \end{align}

\end{remark}
We will prove the two lemmas in a unique proof.

\begin{proof}

We proceed in six steps and we use a standard time discretization technique, with the aid of an implicit Euler scheme.

\medbreak
\noindent\textit{Step 1: Time discretization.} For every integer $k>0$ we consider a partition of the time interval $[0,T]$ into $k$ subintervals of equal length $\delta_k\doteq \frac{T}{k}$, i.e.,    $$t_k^i \doteq i \delta_k  \mbox{ for every } i=0, \dots, k.$$ 
We define 
\begin{align}\label{in.data}
 &(u_k^{-1},\sigma_k^{-1},p_k^{-1}) \doteq (u_0-\delta_k v_0,\sigma_0-\delta_k \mathbb C_r (E v_0-D\psi_\la(\sigma_0)),p_0-\delta_kD\psi_\la(\sigma_0)),\nonumber\\
&u_k^{-2}\doteq u_k^{-1}-\delta_k v_0,\nonumber\\
 &(u_k^0,\sigma_k^0,p_k^0)=(u_0, \sigma_0, p_0).
\end{align}
For $i= 1,\dots,k$ the triplet
$(u_k^i,\sigma_k^i,p_k^i)$ is defined recursively as the solution of the minimum problem 
\begin{equation} \label{minimumproblem}  
\min_{(u, \sigma,p) \in \mathcal{A}_{reg}(w_k^i) } \mathcal{F}_i(u,\sigma,p),
\end{equation} 
where 
\begin{align*}  
\mathcal{F}_i(u,\sigma,p) \doteq  \frac{1}{2}   \Big{\Vert} \frac{ u_3-2 (u_3)_k^{i-1}+ (u_3)_k^{i-2}}{\delta_k} \Big{\Vert}_{L^2}^2+\frac{1}{2}\langle \mathbb{A}_r \sigma, \sigma \rangle +\delta_k \Psi_\la^* \left( \frac{p-p_k^{i-1}}{\delta_k} \right) -\langle \mathcal L_k^i,u\rangle, 
\end{align*}
and
\begin{align*}
\mathcal{A}_{reg}(w)  \doteq & \lbrace (u,\sigma,p) \in (H^{1}(\Om; \R^3) \cap KL(\Om)) \times L^2(\Om; \Mtwo) \times L^{2}(\Om; \Mtwo): \\  &    Eu=\mathbb{A}_r \sigma+p \mbox{ in } \Om, \quad u=w \mbox{ on } \Gd \rbrace.   
\end{align*}
  In \eqref{minimumproblem} we have defined $w_k^i:=w(t_k^i)$ for $i=0,\dots,k$, and 
\begin{align}
 \langle \mathcal L_k^i,u\rangle=\langle f_k^i,\bar u\rangle +\langle g_k^i,u_3\rangle,
\end{align}
where $f_k^i:=f(t_k^i)$, $g_k^i:=g(t_k^i)$.
We also set $w^{-1}_k:=2w^{0}_k-w^{1}_k$ and $w^{-2}_k:=2w^{-1}_k-w^{0}_k$.

It follows from Proposition \ref{propoconjugate} that $\Psi_\la^*$ is coercive with respect to the $L^2-$norm and has more than linear growth. This fact, together with \eqref{Acoercivity}, implies that $\mathcal{F}_i$ is coercive, lower semicontinuous and strictly convex. Then  Korn inequality in $H^1$ ensures the existence and uniqueness of a solution for the problem \eqref{minimumproblem}. 

\medbreak
\noindent\textit{Step 2: Euler-Lagrange equations.} For all $ i =0,\dots,k $ we claim that the solution 
$$(u_k^i, \sigma_k^i, p_k^i) \in \mathcal{A}_{reg} (w_k^i)$$ 
of \eqref{minimumproblem}  satisfies the following system:
\begin{equation} \label{discrete problem}
\begin{cases}
\frac{Eu_k^i-E u_k^{i-1}}{\delta_k}=\frac{e_k^i-e_k^{i-1}}{\delta_k}+\frac{p_k^i-p_k^{i-1}}{\delta_k} \mbox{ in } \Om, \\ 
-\Div \bar{\sigma}_k^i= f^i_k, \quad \frac{(u_3)_k^i-2(u_3)_k^{i-1}+(u_3)_k^{i-2}}{\delta_k^2}-\frac{1}{12} \Div \Div \hat{\sigma}_k^i= g^i_k \mbox{ in } \om, \\
u_k^i=w_k^i \mbox{ on } \Gd, \\
\sigma_k^i \in \Theta(\gn, 0,0), \\
\frac{p_k^i-p_k^{i-1}}{\delta_k}=D \psi_\la (\sigma_k^i),
\end{cases}
\end{equation}
where $e_k^i \doteq \mathbb{A}_r \sigma_k^i$. 

The system \eqref{discrete problem} is satisfied by definition for the index $i=0$. Moreover, by admissibility, $ E u_k^i=e_k^i+p_k^i$, $ E u_k^{i-1}=e_k^{i-1}+p_k^{i-1}$ and the first line in \eqref{discrete problem} follows.

Let $\eps \in (-1,1)$ and let $\varphi \in H^1(\Om; \R^3) \cap KL(\Om)$ with $\varphi=0$ on $\Gd$. Clearly $(u_k^i+\eps \varphi, \sigma_k^i+\eps \mathbb{C}_r E \varphi, p_k^i) \in \mathcal{A}_{reg} (w_k^i) $, so that by minimality of $(u_k^i, \sigma_k^i, p_k^i)$ we can differentiate the energy $\mathcal F_i(u_k^i+\eps \varphi, \sigma_k^i+\eps \mathbb{C}_r E \varphi, p_k^i)$ in $\eps=0$ obtaining 
\begin{equation} \label{variationalmotion}
\langle \frac{(u_3)_k^i-2(u_3)_k^{i-1}+(u_3)_k^{i-2}}{\delta_k^2}, \varphi_3 \rangle +\langle \sigma_k^i, E \varphi \rangle=\langle \mathcal L_k^i,\varphi \rangle.
\end{equation}
This, by arbitrariness of $\varphi$, entails
\begin{equation*} 
\begin{cases}
-\Div \bar{\sigma}_k^i= f^i_k, \quad 
\frac{(u_3)_k^i-2(u_3)_k^{i-1}+(u_3)_k^{i-2}}{\delta_k^2}-\frac{1}{12} \Div \Div \hat{\sigma}_k^i= g^i_k \mbox{ in } \om, \\
u_k^i=w_k^i \mbox{ on } \Gd, \\
\sigma_k^i \in \Theta(\gn, 0,0),
\end{cases}
\end{equation*}
where the Neumann boundary conditions derive from \eqref{traccia0mom} and \eqref{traccia1mom}.
It remains to prove the last condition in  \eqref{discrete problem}.
Let $\eps \in (-1,1)$ and let $\eta \in L^2(\Om;\Mtwo)$. We have 
$$(u_k^i,\sigma_k^i-\eps \mathbb{C}_r(\eta-p_k^i+p_k^{i-1}),p_k^i+\eps (\eta-p_k^i+p_k^{i-1})) \in \mathcal{A}_{reg} (w_k^i),$$ 
so that  we can compare the values of $\mathcal F_i$ at this point and  at the  minimum point $(u_k^i, \sigma_k^i, p_k^i)$, obtaining
\begin{align*}
&\frac{\eps}{\delta_k} \langle \mathbb{A}_r\sigma_k^i,  \mathbb{C}_r(\eta-p_k^i+p_k^{i-1})\rangle -\frac{\eps^2}{2\delta_k} \langle \mathbb{A}_r( \eta-p_k^i+p_k^{i-1}),  \mathbb{C}_r(\eta-p_k^i+p_k^{i-1}) \rangle  \\
&\leq  \Psi_\la^* \left( \frac{p_k^i-p_k^{i-1}+ \eps(\eta-p_k^i+p_k^{i-1}) }{\delta_k} \right)-  \Psi_\la^* \left( \frac{p_k^i-p_k^{i-1} }{\delta_k} \right)\\
&\leq \eps\Big( \Psi_\la^* \left( \frac{\eta }{\delta_k} \right)-\Psi_\la^* \left( \frac{p_k^i-p_k^{i-1} }{\delta_k} \right)\Big),
\end{align*}
the last inequality following by convexity.
From this,  dividing by $\eps$ and letting $\eps \to 0^+$, we get
\begin{equation} \label{differentialinclusion}
\langle \sigma_k^i, \frac{\eta-p_k^i+p_k^{i-1}}{\delta_k} \rangle \leq \Psi_\la^* \left( \frac{\eta }{\delta_k} \right)-\Psi_\la^* \left( \frac{p_k^i-p_k^{i-1} }{\delta_k} \right), 
\end{equation}
where we also used that
$$ \langle \mathbb{A}_r \sigma, \mathbb{C}_r \eta \rangle=\langle \sigma, \eta \rangle \mbox{ for every } \sigma, \eta \in \Mtwo. $$
By arbitrariness of $\eta$ expression \eqref{differentialinclusion} leads to
$$ \sigma_k^i : \frac{\eta-p_k^i+p_k^{i-1}}{\delta_k}  \leq \psi_\la^* \left( \frac{\eta }{\delta_k} \right)-\psi_\la^* \left( \frac{p_k^i-p_k^{i-1} }{\delta_k} \right)\mbox{ a.e.  in } \Om,$$
that is
$$ \sigma_k^i \in \partial  \psi_\la^* \left( \frac{p_k^i-p_k^{i-1}}{\delta_k} \right) \mbox{ a.e.  in } \Om. $$
This fact together with Lemma \ref{14giugno_a} and Lemma \ref{14giugno_b} yields
\begin{equation} \label{NHinversediscreteflowrule}
\sigma_k^i=D \psi_\la^* \left( \frac{p_k^i-p_k^{i-1}}{\delta_k} \right) \mbox{ a.e. in } \Om,
\end{equation}
which is equivalent to the last condition in \eqref{discrete problem}.

\medbreak
\noindent\textit{Step 3: Compactness estimates.} 

For every $i=-2, \dots, k$ we set
$$ \om_k^i \doteq \frac{w_k^{i+1}-w_k^i}{\delta_k}, \quad (v_3)_k^i \doteq \frac{(u_3)_k^{i+1}-(u_3)_k^i}{\delta_k}.  $$
We define two types of interpolations. The piecewise constant interpolations $(u_k,e_k,p_k):[0,T]\rightarrow (H^{1}(\Om; \R^3) \cap KL(\Om)) \times L^2(\Om; \Mtwo) \times L^{2}(\Om; \Mtwo)$ are  given by
$$ (u_k(0),e_k(0),p_k(0)) \doteq (u_0,e_0,p_0) $$
$$ u_k(t) \doteq u_k^{i+1} \quad \sigma_k(t) \doteq e_k^{i+1} \quad p_k(t) \doteq p_k^{i+1} \mbox{ for } t \in (t_k^{i},t_k^{i+1}], $$
and similarly are defined $\om_k$ and $(v_3)_k$. In the same way we denote by $g_k$ and $f_k$ the piecewise constant function defined as
$$ f_k(t) \doteq f_k^{i+1},\;\;  g_k(t) \doteq g_k^{i+1}\mbox{ for } t \in (t_k^{i},t_k^{i+1}],$$
and similarly for $\varrho_k$.
Setting $f_k^{-1}:=f^0_k$, and analogously $g_k^{-1}$ and $\varrho_k^{-1}$, we see that the expression above makes sense also for $i=-1$.

The piecewise affine interpolations are instead
$$ \tilde{u}_k(t)=u_k^{i}+\frac{t-t_k^{i}}{\delta_k}(u_k^{i+1}-u_k^{i}) \mbox{ for } t \in [t_k^{i},t_k^{i+1}], $$
and analogous expressions for $\tilde w_k$, $\tilde{\sigma}_k$, $\tilde{p}_k$, $ (\tilde v_3)_k$, and $(\tilde \omega_3)_k$, $\tilde g_k$, $\tilde f_k$, and $\tilde\varrho_k$. Notice that, thanks to \eqref{in.data}, setting $t_k^{-1}=-\delta_k$, all the functions introduced so far are naturally defined on the interval $[-\delta_k,T]$ and it holds
$ (v_3)_k=(\dot{\tilde{u}}_3)_k$ and $ (\omega_3)_k=(\dot{\tilde{w}}_3)_k$ on $[-\delta_k,T]$. 

Let us show that there exists a constant $C>0$ independent of $k$, $\la$, and $N$, such that
\begin{align} 
&\Vert (\dot{\tilde{u}}_3)_k  \Vert_{L^\infty(L^2)}^2+\Vert \sigma_k  \Vert_{L^\infty(L^2)}^2+\Vert D \psi_\la ( \sigma_k)  \Vert_{L^{N'}(L^{N'})}^{N'} \leq C,\label{primastimapriori}
\end{align}
and 
\begin{align} 
&\Vert \dot{\tilde{\sigma}}_k \Vert_{L^2 (L^2)}^2+ \Vert (\dot{\tilde{v}}_3)_k  \Vert_{L^2 (L^2)}^2  \leq C \label{secondprioriestimate}.
\end{align}
To prove \eqref{primastimapriori} we test \eqref{variationalmotion} by $\varphi=u_k^i-u_k^{i-1}-w_k^i+w_k^{i-1}$. Then, owing to the first and last conditions in \eqref{discrete problem}, we infer
\begin{align}\label{numero}
& \frac{1}{2} \Big{\Vert} \frac{(u_3)_k^i-(u_3)_k^{i-1}}{\delta_k} \Big{\Vert}_{L^2}^2 -\frac{1}{2} \Big{\Vert} \frac{(u_3)_k^{i-1}-(u_3)_k^{i-2}}{\delta_k} \Big{\Vert}_{L^2}^2+\frac{1}{2} \langle \mathbb{A}_r \sigma_k^i, \sigma_k^i \rangle \nonumber\\
&- \frac{1}{2} \langle \mathbb{A}_r \sigma_k^{i-1}, \sigma_k^{i-1} \rangle+ \delta_k \langle  \sigma_k^i, D \psi_\la (\sigma_k^i) \rangle\nonumber \\ 
&   \leq \langle \frac{(u_3)_k^i-2(u_3)_k^{i-1}+(u_3)_k^{i-2}}{\delta_k^2}, (w_3)_k^i-(w_3)_k^{i-1} \rangle+  \langle \sigma_k^i,  E w_k^i-E w_k^{i-1} \rangle\nonumber\\
&+\langle g^i_k,(u_3)_k^i-(u_3)_k^{i-1}-(w_3)_k^i+(w_3)_k^{i-1}\rangle +\langle f^i_k,\bar u_k^i-\bar u_k^{i-1}-\bar w_k^i+\bar w_k^{i-1} \rangle. 
\end{align}
We now write 
\begin{align*}
 &\langle f^i_k,\bar u_k^i-\bar u_k^{i-1}-\bar w_k^i+\bar w_k^{i-1}\rangle=\langle \bar\varrho_k^i,E\bar u_k^i-E\bar u_k^{i-1}-E\bar w_k^i+E\bar w_k^{i-1}\rangle\\
 &=\langle \bar\varrho_k^i,e_k^i-e_k^{i-1}\rangle +\langle \bar\varrho_k^i,p_k^i-p_k^{i-1}\rangle -\langle \bar\varrho_k^i,E\bar w_k^i-E\bar w_k^{i-1}\rangle
\end{align*}
and, in turn,
\begin{align*}
 &\langle \bar\varrho_k^i,e_k^i-e_k^{i-1}\rangle=\langle \bar\varrho_k^i,e_k^i\rangle-\langle \bar\varrho_k^{i-1},e_k^{i-1}\rangle-\langle \bar\varrho_k^i-\bar\varrho_k^{i-1},e_k^{i-1}\rangle,\\
 &\langle \bar\varrho_k^i,p_k^i-p_k^{i-1}\rangle =\delta_k\langle \bar\varrho_k^i,D \psi_\la (\sigma^i_k)\rangle.
\end{align*}

For $j\geq2$, summing expression \eqref{numero} on $i=1,\dots, j$ and rearranging terms we get
\begin{align*}
& \frac{1}{2} \Big{\Vert} \frac{(u_3)_k^j-(u_3)_k^{j-1}}{\delta_k} \Big{\Vert}_{L^2}^2+\frac{1}{2} \langle \mathbb{A}_r \sigma_k^j, \sigma_k^j \rangle+ \frac{\delta_k}{\al_0^{N-1}} \sum_{i=1}^j  \int_\Om  ( \vert \sigma_k^i\vert_r^{N-2} \wedge \la^{N-2}) \vert \sigma_k^i \vert_r^2 dx \\
& \leq \frac{1}{2} \Vert ( v_0)_3 \Vert_{L^2}^{2}+\frac{1}{2} \langle \mathbb{A}_r \sigma_0, \sigma_0 \rangle+ \langle \frac{(u_3)_k^j-(u_3)_k^{j-1}}{\delta_k}, \frac{(w_3)_k^j-(w_3)_k^{j-1}}{\delta_k} \rangle-\langle(v_0)_3,(\omega_0)_3\rangle \\
&-\sum_{i=1}^{j} \langle (u_3)_k^{i-1}-(u_3)_k^{i-2},  \frac{(w_3)_k^i-2(w_3)_k^{i-1}+(w_3)_k^{i-2}}{\delta_k^2}  \rangle +\sum_{i=1}^j\langle (u_3)_k^{i}-(u_3)_k^{i-1},g_k^i\rangle \\
&+\langle \bar\varrho_k^j,e_k^j\rangle-\langle \bar\varrho_k^{0},e_k^{0}\rangle-\sum_{i=1}^j\langle \bar\varrho_k^i-\bar\varrho_k^{i-1},e_k^{i-1}\rangle+\delta_k\sum_{i=1}^j\langle \bar\varrho_k^i,D \psi_\la (\sigma^i_k)\rangle\\
&+\sum_{i=1}^j  \langle\sigma_k^i, E w_k^i-E w_k^{i-1}\rangle -\sum_{i=1}^j \langle g^i_k,(w_3)_k^i-(w_3)_k^{i-1}\rangle-\sum_{i=1}^j\langle \bar\varrho_k^i,E\bar w_k^i-E\bar w_k^{i-1}\rangle, 
\end{align*}
where we have used that $w^{-2}_k=w^{-1}_k-\delta_k(\omega_0)_3$.
From \eqref{DpsilambdaNN-1} it follows that
$$ \int_\Om \vert D \psi_\la (\sigma_k(t,x)) \vert^{N'}dx \leq \frac{1}{\al_0^N} \int_\Om \left( \vert \sigma_k(t,x) \vert_r^{N-2} \wedge \la^{N-2} \right) \vert \sigma_k (t,x) \vert_r^2 dx. $$
Consequently,  we rewrite the previous expression as
\begin{align*}
& \frac{1}{2\alpha_0} \Big{\Vert} (\dot{\tilde u}_3)_k(t_k^{j-1})\Big{\Vert}_{L^2}^2+\frac{1}{2\alpha_0} \langle \mathbb{A}_r \sigma_k^j, \sigma_k^j \rangle+\int_0^{t_k^{j-1}} \int_\Om  \vert D \psi_\la (\sigma_k(t,x)) \vert^{N'} dx dt \\
& \leq \frac{1}{2\alpha_0} \Vert ( v_0)_3 \Vert_{L^2}^{2}+\frac{1}{2\alpha_0} \langle \mathbb{A}_r \sigma_0, \sigma_0 \rangle+ \frac{1}{\alpha_0}\langle (\dot{\tilde u}_3)_k(t_k^{j-1}),(\dot{\tilde w}_3)_k(t_k^{j-1}) \rangle-\frac{1}{\alpha_0}\langle(v_0)_3,(\omega_0)_3\rangle  \\
&-\frac{1}{\alpha_0}\int_0^{t_k^{j-2}}\langle (\dot{\tilde u}_3)_k(t),  (\dot{\tilde \omega}_3)_k(t-\delta_k)  \rangle dt-\frac{1}{\alpha_0}\int_0^{t_k^j}\langle(\dot{\tilde u}_3)_k(t),g_k(t)\rangle dt  \\
&+\frac{1}{\alpha_0}\langle \mathbb A_r\bar\varrho_k^j,\sigma_k^j\rangle-\frac{1}{\alpha_0}\langle \mathbb A_r\bar\varrho_k^{0},\sigma_k^{0}\rangle-\frac{1}{\alpha_0}\int_0^{t_k^j}\langle \mathbb A_r\dot{\bar\varrho}_k(t),\sigma_k(t)\rangle dt+\int_0^{t_k^j}\langle \frac{\bar\varrho_k(t)}{\alpha_0},D \psi_\la (\sigma_k(t))\rangle dt\\
 &+\frac{1}{\alpha_0}\int_0^{t_k^j} \langle \sigma_k(t),  E \dot{\tilde w}_k(t-\delta_k)\rangle dt +\frac{1}{\alpha_0}\int_0^{t_k^j} \langle g_k(t),(\dot{\tilde w}_3)(t-\delta_k)\rangle dt-\frac{1}{\alpha_0}\int_0^{t_k^j}\langle \bar\varrho_k(t),\partial_tE\bar{\tilde w}_k(t-\delta_k)\rangle dt . 
\end{align*}
Standard applications of H\"older and Young inequalities, together with the Gronwall Lemma gives the following estimates 
\begin{align}
 \Vert (\dot{\tilde{u}}_3)_k  \Vert_{L^\infty(L^2)}^2+\Vert \sigma_k  \Vert_{L^\infty(L^2)}^2+  \int_{\delta_k}^T \int_\Om  \vert D \psi_\la (\sigma_k (t,x)) \vert^{N'} dxdt\leq C,
\end{align}
where the constant $C>0$ depends on the following quantities
\begin{align*}
 \|(v_0)_3\|_{L^2}, \;\|\sigma_0\|_{L^2},\;\|\ddot{w}_3\|_{L^1(L^2)},\;\;\|\dot{w}_3\|_{L^\infty(L^2)},\;\;\|g\|_{L^1(L^2)},\;\|\bar{\varrho}\|_{W^{1,1}(L^2)},\;\|E\dot{ w}\|_{L^1(L^2)},
\end{align*}
and is independent of  $N$, $k$, and $\la$. 
To obtain the previous estimate we have treated the term $\int_0^{t_k^j}\langle \bar\varrho_k,D \psi_\la (\sigma_k)\rangle dt$ as follows.
We have used the fact, ensured by the safe-load condition \eqref{regularityvarrho*}, that $|\frac{\bar\varrho_k}{\alpha_0}|\leq|\frac{\bar\varrho_k}{\alpha_0}|_r\leq(1-\gamma)$ so that 
\begin{align}
 &\int_0^{t_k^j}\langle \frac{\bar\varrho_k}{\alpha_0},D \psi_\la (\sigma_k)\rangle dt\leq(1-\gamma)\int_0^{t_k^j}\int_\Om|D \psi_\la (\sigma_k)|dx dt\nonumber\\&\leq\frac{(1-\gamma)}{N'}\int_0^{t_k^{j-1}} \int_\Om  \vert D \psi_\la (\sigma_k(t,x)) \vert^{N'} dx dt+\frac{(1-\gamma)|\Om|T}{N}\nonumber\\
 &\leq (1-\gamma)\int_0^{t_k^{j-1}} \int_\Om  \vert D \psi_\la (\sigma_k(t,x)) \vert^{N'} dx dt+C.
\end{align}

Now let us prove \eqref{secondprioriestimate}. Subtracting the first expression in \eqref{discrete problem} at step $i$ with the one at step $i-1$,  dividing by $\delta_k$, and then testing the result with $\sigma_k^i-\sigma_k^{i-1}$, we infer 
\begin{align} 
&\langle \mathbb{A}_r \frac{\sigma_k^i-2\sigma_k^{i-1}+\sigma_k^{i-2}}{\delta_k^2}, \sigma_k^i-\sigma_k^{i-1} \rangle+\langle  \frac{p_k^i-2p_k^{i-1}+p_k^{i-2}}{\delta_k^2}, \sigma_k^i-\sigma_k^{i-1} \rangle  \notag \\ \label{kinederivata}
&=\langle \frac{E u_k^i-2 E u_k^{i-1}+E u_k^{i-2}}{\delta_k^2}, \sigma_k^i-\sigma_k^{i-1} \rangle
\end{align} 
The first term in the left-hand side of \eqref{kinederivata} can be estimated from below by
$$ \frac{1}{2} \langle \mathbb{A}_r \frac{\sigma_k^{i}-\sigma_k^{i-1}}{\delta_k}, \frac{\sigma_k^{i}-\sigma_k^{i-1}}{\delta_k} \rangle- \frac{1}{2} \langle \mathbb{A}_r \frac{\sigma_k^{i-1}-\sigma_k^{i-2}}{\delta_k}, \frac{\sigma_k^{i-1}-\sigma_k^{i-2}}{\delta_k} \rangle. $$
Using the last condition in \eqref{discrete problem}, we note that the second term in the left-hand side of \eqref{kinederivata}  is equal to
$$ \langle  \frac{D \psi_\la (\sigma_k^i)-D \psi_\la (\sigma_k^{i-1})}{\delta_k}, \sigma_k^i-\sigma_k^{i-1} \rangle, $$
which is nonnegative, since  the differential of a convex function is a monotone operator.

From  \eqref{variationalmotion} we can write the right-hand side of \eqref{kinederivata} as 
\begin{align*}
&\langle \sigma_k^i-\sigma_k^{i-1}, \frac{E \om_k^{i-1}-E \om_k^{i-2}}{\delta_k} \rangle+\langle \frac{E\bar u_k^i-2 E\bar u_k^{i-1}+E\bar u_k^{i-2}}{\delta_k^2}-\frac{E\bar w_k^i-2 E\bar w_k^{i-1}+E\bar w_k^{i-2}}{\delta_k^2}, \bar\sigma_k^i-\bar\sigma_k^{i-1} \rangle \\
&-\frac{1}{12} \langle \hat{\sigma}_k^i-\hat{\sigma}_k^{i-1}, \frac{ D^2 (v_3)_k^{i-1}-D^2 (v_3)_k^{i-2}}{\delta_k}- \frac{ D^2 (\om_3)_k^{i-1}-D^2 (\om_3)_k^{i-2}}{\delta_k} \rangle \\
&=\langle \sigma_k^i-\sigma_k^{i-1}, \frac{E \om_k^{i-1}-E \om_k^{i-2}}{\delta_k} \rangle+\langle \frac{E\bar u_k^i-2 E\bar u_k^{i-1}+E\bar u_k^{i-2}}{\delta_k^2}-\frac{E\bar w_k^i-2 E\bar w_k^{i-1}+E\bar w_k^{i-2}}{\delta_k^2}, \bar\varrho_k^i-\bar\varrho_k^{i-1} \rangle  \\
&- \langle \frac{(v_3)_k^{i-1}-2(v_3)_k^{i-2}+(v_3)_k^{i-3}-(\om_3)_k^{i-1}+2(\om_3)_k^{i-2}-(\om_3)_k^{i-3}}{\delta_k}, \\
& \frac{  (v_3)_k^{i-1}- (v_3)_k^{i-2}}{\delta_k}- \frac{  (\om_3)_k^{i-1}- (\om_3)_k^{i-2}}{\delta_k} \rangle \\
&- \langle \frac{(\om_3)_k^{i-1}-2(\om_3)_k^{i-2}+(\om_3)_k^{i-3}}{\delta_k}-\frac{g_k^i-g_k^{i-1}}{\delta_k}, 
\frac{  (v_3)_k^{i-1}- (v_3)_k^{i-2}}{\delta_k}- \frac{  (\om_3)_k^{i-1}- (\om_3)_k^{i-2}}{\delta_k} \rangle.
\end{align*} 
Let us first consider the case that $f$ is independent of $t$. This implies that the second term of the right-hand side is null. Using this expression, let $j\geq2$ and sum \eqref{kinederivata} on  $i=1, \dots, j$. We get
\begin{align}\label{f_ind_time}
&\Big{\Vert} \frac{\sigma_k^j-\sigma_k^{j-1}}{\delta_k} \Big{\Vert}_{L^2}^2+ \frac{1}{2} \Big{\Vert} \frac{  (v_3)_k^{j-1}- (v_3)_k^{j-2}-(\om_3)_k^{j-1}+ (\om_3)_k^{j-2}}{\delta_k} \Big{\Vert}_{L^2}^{2}\nonumber \\
&\leq C+ \sum_{i=1}^j \langle \frac{\sigma_k^i-\sigma_k^{i-1}}{\delta_k}  ,  \frac{E \om_k^{i-1}-E \om_k^{i-2}}{\delta_k} \rangle\nonumber \\
&- \sum_{i=1}^j  \langle \frac{(\om_3)_k^{i-1}-2(\om_3)_k^{i-2}+(\om_3)_k^{i-3}}{\delta_k}-\frac{g_k^i-g_k^{i-1}}{\delta_k}, 
\frac{  (v_3)_k^{i-1}- (v_3)_k^{i-2}}{\delta_k}- \frac{  (\om_3)_k^{i-1}- (\om_3)_k^{i-2}}{\delta_k} \rangle,
\end{align}
where we have used $(v_3)_k^{0}=(v_3)_k^{-1}$ and $(\omega_3)_k^{0}=(\omega_3)_k^{-1}$, and $C=\| \frac{\sigma_k^0-\sigma_k^{-1}}{\delta_k} \|_{L^2}^2=\|\mathbb C_r (Ev_0-D\psi_\la(\sigma_0))\|_{L^2}^2$.
This leads to the estimate
\begin{align} 
&\Vert \dot{\tilde{\sigma}}_k \Vert_{L^\infty (L^2)}^2+ \Vert (\dot{\tilde{v}}_3)_k  \Vert_{L^\infty (L^2)}^2  \leq C \label{thirdprioriestimate},
\end{align}
where the constant is independent of $k$, $\la$, and $N$, and depends on 
\begin{align}
\|E\ddot{w}\|_{L^1(L^2)},\;\;\|\dddot w_3 \|_{L^1(L^2)},\;\;\|\dot g\|_{L^1(L^2)}.
\end{align}
Let us now consider the general case in which $f$ might depend on time $t$. This means that in the right-hand side of \eqref{f_ind_time} there is the additional term 
\begin{align}
 &\sum_{i=1}^j\langle \frac{E\bar u_k^i-2 E\bar u_k^{i-1}+E\bar u_k^{i-2}}{\delta_k^2}-\frac{E\bar w_k^i-2 E\bar w_k^{i-1}+E\bar w_k^{i-2}}{\delta_k^2}, \bar\varrho_k^i-\bar\varrho_k^{i-1} \rangle=\nonumber\\
 &=\langle E{\bar v}_k^{j-1},\frac{\bar\varrho_k^j-\bar\varrho_k^{j-1}}{\delta_k}\rangle-\langle E{\bar v}_k^{0},\dot{\bar\varrho}(0)\rangle-\sum_{i=1}^j\langle E{\bar v}_k^{i-2},\ddot{\bar\varrho}_k^i\rangle-\sum_{i=1}^j\langle\frac{E\bar\omega^{i-1}_k-E\bar\omega^{i-2}_k}{\delta_k},\bar\varrho_k^i-\bar\varrho_k^{i-1}\rangle\nonumber\\
 &=\langle \dot{\tilde p}_k(t_k^{j-1}),\frac{\bar\varrho_k^j-\bar\varrho_k^{j-1}}{\delta_k}\rangle+\langle \dot{\tilde \sigma}_k^{j}(t_k^{j-1}),\mathbb A_r\frac{\bar\varrho_k^j-\bar\varrho_k^{j-1}}{\delta_k}\rangle-\langle E{\bar v}_k^{0},\dot{\bar\varrho}(0)\rangle-\sum_{i=1}^j\langle \dot{\tilde p}_k^{i-2}+\dot{\tilde e}_k^{i-2},\ddot{\bar\varrho}_k^i\rangle\nonumber\\
 &-\sum_{i=1}^j\langle\frac{E\bar\omega^{i-1}_k-E\bar\omega^{i-2}_k}{\delta_k},\bar\varrho_k^i-\bar\varrho_k^{i-1}\rangle\nonumber\\
 &\leq C\|\dot{\tilde p}_k(t_k^{j-1})\|_{L^1}+\frac{1}{2}\|\dot{\tilde \sigma}_k(t_k^{j-1})\|_{L^2}^2 +C +C\int_{0}^T\|\dot{\tilde \sigma}_k\|_{L^2}ds, 
\end{align}
where we have denoted $\ddot{\bar\varrho}_k^i:=\frac{\bar \varrho_k^i-2 \bar \varrho_k^{i-1}+\bar \varrho_k^{i-2}}{\delta_k^2}\in L^{\infty}([0,T]\times \Om)$ and $\dot{\bar\varrho}(0):=\frac{\bar\varrho_k^0-\bar\varrho_k^{-1}}{\delta_k}$. Here the constant $C$ depends on the norms
\begin{align}
 \|\dot{\bar \varrho}\|_{L^\infty(L^\infty)},\;\;\|\ddot{\bar \varrho}\|_{L^\infty(L^\infty)},\;\;\|E\ddot w\|_{L^1(L^2)}.
\end{align}
Coupling this last inequality with \eqref{f_ind_time} we arrive at
\begin{align*}
&\frac{1}{2}\Big{\Vert} \dot{\tilde \sigma}_k(t) \Big{\Vert}_{L^2}^2+ \frac{1}{2} \Big{\Vert}  (\dot{\tilde v}_3)_k(t)- (\dot{\tilde\omega}_3)_k(t) \Big{\Vert}_{L^2}^{2}\\
&\;\;\;\leq C+C \Big(\|\dot{\tilde p}_k(t)\|_{L^1}+\int_{0}^Ta_1(s)\|\dot{\tilde \sigma}_k\|_{L^2}ds+\int_0^Ta_2(s)\|  (\dot{\tilde v}_3)_k- (\dot{\tilde\omega}_3)_k\|_{L^2}ds \Big),
\end{align*}
for all $t\in[0,T]$, where $a_1$ and $a_2$ belong to $L^1([0,T])$ and are independent of $k$, $\la$, and $N$.
Integrating in the variable $t\in[0,T]$ and using Young inequality we conclude \eqref{secondprioriestimate}, thanks to \eqref{primastimapriori}.
Finally, writing $E\tilde u_k=\tilde e_k+\tilde p_k$, from the fact that $p_0:=Eu_0-e_0\in L^2(\Om; \Mtwo)$, \eqref{discrete problem}, \eqref{primastimapriori}, \eqref{secondprioriestimate},  and the Korn inequality, it follows that 
\begin{equation} \label{thirdestimate}
\Vert \tilde{u}_k \Vert_{W^{1,N'}(W^{1,N'})} \leq C,
\end{equation}
where $C$ is a constant that does not depend on $k$, $\lambda$, and $N$.

\medbreak
\noindent\textit{Step 4: Existence.}
 From \eqref{primastimapriori}, \eqref{secondprioriestimate}, and \eqref{thirdestimate} we deduce that there exist
\begin{equation}
\begin{split}
& u^\la \in W^{1,N'}([0,T];W^{1,N'}(\Omega; \R^3) \cap KL (\Om)) ,  \\
& \sigma^\la \in L^{\infty}([0,T]; L^2(\Om; \Mtwo)), \quad \tilde{\sigma}^\la \in  H^1([0,T]; L^2(\Om; \Mtwo)), \\
& v_3^\la \in H^1([0,T]; L^2(\omega)),
\end{split}
\end{equation}
 such that, up to subsequences,
\begin{align}
& \tilde{u}_k \wto u^\la \mbox{ weakly in } W^{1,N'}([0,T];W^{1,N'}(\Omega; \R^3)),  \label{convuk} \\
& (\tilde{u}_3)_k \wto u_3^\la \mbox{ weakly* in } W^{1,\infty}([0,T];L^2(\omega)), \label{convu3k} \\
& (\tilde{v}_3)_k \wto v_3^\la \mbox{ weakly in } H^1([0,T];L^2(\omega)), \label{convv3k} \\
& {\sigma}_k \wto {\sigma}^\la \mbox{ weakly* in } L^{\infty}([0,T];L^2(\Omega; \Mtwo)), \label{convsigmak} \\ 
& \tilde{\sigma}_k \wto \tilde{\sigma}^\la \mbox{ weakly in } H^1([0,T];L^2(\Omega; \Mtwo)), \label{convmintytrick1} \\ 
& \tilde{p}_k \wto p^\la \mbox{ weakly in } W^{1,N'}([0,T];L^{N'}(\Omega; \Mtwo)). \label{convpk}
\end{align}
Using the estimates in \eqref{secondprioriestimate} we have 
\begin{align}
& \int_0^T\Vert (\tilde{v}_3)_k(t)-(\dot{\tilde{u}}_3)_k(t)  \Vert_{L^2}dt=\sum_{i=1}^k\int_{t_k^{i-1}}^{t_k^{i}}\Vert (\tilde{v}_3)_k(t)-({\tilde{v}}_3)_k(t_k^{i-1})  \Vert_{L^2}dt \nonumber\\\label{v3ku3kstessolimite}
&\leq\delta_k^{1/2} \int_0^T\Vert (\dot{\tilde{v}}_3)_k(t) \Vert_{L^2}dt \leq C \delta_k^{1/2},  \\ \label{sigmasigmatildestessolimite}
& \int_0^T\Vert \tilde{\sigma}_k(t)-\sigma_k(t)  \Vert_{L^2}dt\leq \delta_k^{1/2}\int_0^T \Vert \dot{\tilde{\sigma}}_k(t) \Vert_{L^2}dt \leq C \delta_k^{1/2}.
\end{align}
This proves that $\sigma^\la=\tilde{\sigma}^\la$ and $v_3^\la=\dot{u}_3^\la$.
In particular $$ u_3^\la \in H^2([0,T];L^2(\om)).$$ 
Using that $p_0\in L^2(\Om; \Mtwo)$, estimate \eqref{Dpsilambdalipschitz}, and the first condition in \eqref{discrete problem}, we infer 
$$u^\la \in H^1([0,T]; H^1(\Om; \R^3) \cap KL (\Om)), \quad p^\la \in H^1([0,T]; L^2(\Om; \Mtwo)) ,$$ 
and
\begin{equation} \label{convformintytrick2}
\tilde{p}_k \wto p^\la \quad \mbox{ weakly in } H^1([0,T]; L^2(\Om; \Mtwo)).
\end{equation} 
Now it is convenient to introduce the notation
\begin{align}\label{hat-func}
\eta_k(t):=\tilde \omega_k(t-\delta_k),\;\;\;\;\;\xi_k(t):=(\tilde v_3)_k(t-\delta_k),\mbox{ for } t \in [0,T]. 
\end{align}
By \eqref{hat-func}, the estimate
\begin{align*}
&\int_0^T\Vert (\tilde{v}_3)_k(t)-\xi_k(t)  \Vert_{L^2}dt=\int_0^T\Vert \int_{t-\delta_k}^t(\dot{\tilde{v}}_3)_k(s)ds  \Vert_{L^2}dt\leq\delta_k^{1/2} C\int_0^T  \Vert (\dot{\tilde{v}}_3)_k \Vert_{L^2(L^2)} dt = C \delta_k^{1/2}, 
\end{align*}
implies, thanks to \eqref{convv3k}, that 
\begin{eqnarray}
 & \xi_k \wto \dot u_3^\la \mbox{ weakly in } H^1([0,T];L^2(\Omega)). \label{convvhat} 
\end{eqnarray}
Let us show that $(u,\sigma)$ satisfy \eqref{NHLambdaproblem}. The definitions of $\sigma_k$ and $\xi_k$  allow us to rewrite \eqref{variationalmotion} as
\begin{equation*}
\int_{0}^T \langle\dot\xi_k (t), \varphi_3 \rangle\psi(t) dt+\int_{0}^T\langle \sigma_k (t), E \varphi\rangle \psi (t) dt=\int_0^T\langle \mathcal L_{k}(t),\varphi\rangle\psi(t) dt,
\end{equation*}
for every $\varphi \in H^1(\Om;\R^3) \cap KL(\Om)$ with $\varphi=0$ on $\Gd$ and every $\psi\in C([0,T];\R)$. Thanks to \eqref{convv3k}, \eqref{convsigmak}, and \eqref{v3ku3kstessolimite}, we can pass to the limit in this equality as $k \to +\infty$ and get
\begin{equation} \label{variationalcontinuousmotion}
\int_{0}^T \langle \ddot{u}_3^\la (t),  \varphi_3 \rangle \psi (t) dt+\int_{0}^T \langle \sigma^\la (t), E \varphi \rangle \psi (t) dt=\int_0^T\langle \mathcal L(t),\varphi\rangle\psi(t) dt,.
\end{equation}
Thanks to the arbitrariness of $\varphi$ and $\psi$ this proves the second condition in \eqref{NHLambdaproblem}  with the corresponding Dirichlet and Neumann boundary conditions, owing to \eqref{traccia0mom} and \eqref{traccia1mom}. To conclude that the first equation in \eqref{NHLambdaproblem} is satisfied we have to check that
\begin{equation} \label{NHflowrule}
\dot{p}^\la(t)=D \psi_\la (\sigma^\la(t)) \mbox{ a.e.  in } \Om,
\end{equation}
for a.e. $t \in [0,T]$. 

First observe that \eqref{convsigmak} and  \eqref{convformintytrick2} together imply that
\begin{align}
&\sigma_k \wto \sigma^\la \quad \mbox{ weakly in } L^2([0,T]; L^2(\Om; \Mtwo)),\label{convsigmak_minty} \\
&\dot{\tilde{p}}_k \wto \dot p^\la \quad \mbox{ weakly in } L^2([0,T]; L^2(\Om; \Mtwo)).\label{convpk_minty}
\end{align}
We want to apply Proposition \ref{mintytrick} with $ A=D \Psi_\la$ and $X=X'=L^2([0,T]; L^2(\Om; \Mtwo))$. The first three hypotheses of Proposition \ref{mintytrick} hold true, owing to the last condition in \eqref{discrete problem}, \eqref{convsigmak_minty} and \eqref{convpk_minty}. To conclude  \eqref{NHflowrule}, we have to prove that
\begin{equation}
\limsup_{k \to +\infty} \int_0^T \langle \dot{\tilde{p}}_k(t), \sigma_k(t) \rangle dt \leq \int_0^T \langle \dot{p}^\la(t), \sigma^\la(t) \rangle dt.
\end{equation}
First of all, as a consequence of the first condition in \eqref{discrete problem}, we have that
\begin{align*}
\int_0^T \langle \dot{\tilde{p}}_k(t), \sigma_k(t) \rangle dt= \int_0^T \langle E \dot{\tilde{u}}_k(t), \sigma_k(t) \rangle dt-\int_0^T \langle  \dot{\tilde{e}}_k(t), \sigma_k(t) \rangle dt.
\end{align*}
Let us consider the first term on the right hand-side. From the second equation in \eqref{discrete problem} it follows that 
\begin{align} \label{sigmakEuk}
& \int_0^T \langle E \dot{\tilde{u}}_k(t), \sigma_k(t) \rangle dt=\nonumber\\
&=\int_0^T \langle E \dot{\tilde{w}}_k(t), \sigma_k(t) \rangle dt+ \int_0^T\langle E\bar v_k(t)-E\bar \omega_k(t),\bar \sigma_k(t)\rangle dt-\frac{1}{12} \int_0^T \langle D^2 (\dot{\tilde{u}}_3)_k(t)-D^2 (\dot{\tilde{w}}_3)_k(t), \hat{\sigma}_k(t) \rangle dt \nonumber\\
&= +\int_0^T \langle E \dot{\tilde{w}}_k(t), \sigma_k(t) \rangle dt+\int_0^T\langle \bar v_k(t)-\bar \omega_k(t),f_k(t)\rangle dt+\int_0^T \langle (\dot{\tilde{u}}_3)_k(t)-(\dot{\tilde{w}}_3)_k(t), g_k(t) \rangle dt\nonumber\\
& -\int_0^T \langle \xi_k(t)-\eta_k (t),\dot\eta_k (t)   \rangle dt - \frac{1}{2}   \Vert  \xi_k(T)-\eta_k (T) \Vert_{L^2}^{2}+\frac{1}{2}   \Vert  \xi_k(0)- \eta_k(0) \Vert_{L^2}^{2}\nonumber\\
& +\int_{0}^{T} \langle  \xi_k(t)-\eta_k (t)-({\dot{\tilde{u}}}_3)_k(t)+ (\dot{\tilde{w}}_3)_k(t),(\dot{\tilde{v}}_3)_k(t) \rangle dt.
\end{align}
Thanks to \eqref{convvhat}, we deduce that 
 $$ \xi_k(t) \wto \dot{u}_3^\la(t) \quad \mbox{ weakly  in }  L^2(\Om) \mbox{ for every } t \in [0,T]. $$  
From this we infer
\begin{align*}
 & \limsup_{k \to +\infty} \Big( - \frac{1}{2}   \Vert  \xi_k(T)-\eta_k (T) \Vert_{L^2}^{2}+\frac{1}{2}   \Vert  \xi_k(0)- \eta_k(0) \Vert_{L^2}^{2}  \Big) \\
& \leq - \frac{1}{2}   \Vert  \dot{{u}}_3^\la(T)- \dot{{w}}_3^\la(T) \Vert_{L^2}^{2} + \frac{1}{2}   \Vert  (v_0)_3- (\dot{{w}}_3)(0) \Vert_{L^2}^{2},
\end{align*}
where we have used that $\xi_k(0)=(v_0)_3$. Moreover
\begin{align*}
&\limsup_{k \to +\infty} -\int_{0}^{T} \langle \xi_k(t)-\eta_k (t),\dot\eta_k (t)   \rangle dt \\
&=\lim_{k \to +\infty} -\int_{0}^{T} \langle \xi_k(t)-\eta_k (t),\dot\eta_k (t)   \rangle dt = - \int_0^T \langle \dot{u}_3^\la(t)-\dot{w}_3 (t), \ddot{w}_3(t) \rangle dt,
\end{align*}
while, as a consequence of \eqref{v3ku3kstessolimite} and \eqref{convvhat}, the last term in the right-hand side of  \eqref{sigmakEuk} tends to $0$ as $k \to +\infty$. 
Finally, from \eqref{convmintytrick1} and \eqref{sigmasigmatildestessolimite},  we get
\begin{equation*}
\tilde{\sigma}_k (t) \wto \sigma^\la (t) \mbox{ weakly in } L^2(\Om; \Mtwo) \mbox{ for every } t \in [0,T].
\end{equation*}
Then
\begin{align*}
&\limsup_{k \to +\infty} \int_0^T \langle \sigma_k(t), \dot{\tilde{e}}_k (t) \rangle dt \\
&= \limsup_{k \to +\infty} \Big\{ -\frac{1}{2} \langle \mathbb{A}_r \tilde{\sigma}_k (T),\tilde{\sigma}_k (T) \rangle +\frac{1}{2} \langle \mathbb{A}_r {\sigma}_0, {\sigma}_0 \rangle -\int_0^T \langle \sigma_k(t)-\tilde{\sigma}_k (t), \dot{\tilde{e}}_k (t) \rangle dt  \Big\} \\
& \leq -\frac{1}{2} \langle \mathbb{A}_r {\sigma}^\la (T),{\sigma}^\la(T) \rangle +\frac{1}{2} \langle \mathbb{A}_r {\sigma}_0, {\sigma}_0 \rangle,
\end{align*} 
where in the inequality we have used \eqref{sigmasigmatildestessolimite}. All these considerations lead to
\begin{align*}
&\limsup_{k \to +\infty}  \int_0^T \langle  \dot{\tilde{p}}_k(t), \sigma_k(t) \rangle dt
\leq   - \frac{1}{2}   \Vert  \dot{{u}}_3^\la(T)- (\dot{{w}}_3)(T) \Vert_{L^2}^{2} + \frac{1}{2}   \Vert  \dot{{u}}_3^\la(0)- \dot{{w}}_3(0) \Vert_{L^2}^{2} \\
&+\int_0^T \langle \sigma^\la (t), E \dot{w}(t) \rangle dt+\int_0^T \langle \ddot{w}_3(t), \dot{w}_3 (t)-\dot{u}_3^\la (t) \rangle dt -\frac{1}{2} \langle \mathbb{A}_r {\sigma}^\la (T),{\sigma}^\la(T) \rangle +\frac{1}{2} \langle \mathbb{A}_r {\sigma}_0, {\sigma}_0 \rangle \\
&+\int_0^T\langle \dot{\bar u}^\la(t)-\dot{\bar w}(t),f(t)\rangle dt+\int_0^T \langle \dot{{u}}^\la_3(t)-\dot{\tilde{w}}_3(t), g(t) \rangle dt= \int_0^T \langle  \dot{p}^\la(t), \sigma^\la(t) \rangle dt.
\end{align*}
Note that the last equality in the previous formula derives from an integration by parts and from \eqref{variationalcontinuousmotion}. 

It remains to check that the initial conditions hold. The definition of the interpolate approximating functions gives
$$ (\tilde{u}_k(0),\tilde{\sigma}_k(0))=(u_0,\sigma_0), \qquad \xi_k(0)=(v_0)_3.  $$
Thus the initial conditions follow from the previous equalities together with \eqref{convuk},\eqref{convmintytrick1} and \eqref{convvhat}.
This proves the existence of solutions for \eqref{NHLambdaproblem} and concludes Step 4.

\medbreak
\noindent\textit{Step 5: Uniqueness.}  Let $(u^\la, \sigma^\la)$ and $(v^\la,\tau^\la)$ be two solutions with the same initial data $\sigma^\la(0)=\tau^\la(0)$, $u^\la(0)=v^\la(0)$. By the first equation in \eqref{NHLambdaproblem} we get
\begin{align*}
&\langle \mathbb{A}_r \dot{\sigma}^\la(t)-\mathbb{A}_r\dot{\tau}^\la(t), \sigma^\la(t)-\tau^\la(t) \rangle+\langle D \psi_\la( \sigma^\la (t))-D \psi_\la (\tau^\la (t)),\sigma^\la (t)-\tau^\la (t) \rangle \\
&=\langle E \dot{u}^\la (t)-E \dot{v}^\la (t),\sigma^\la (t)-\tau^\la (t) \rangle.
\end{align*} 
Integrating by parts the right-hand side and using the convexity of $D\psi_\la$ we obtain
$$\langle \mathbb{A}_r \dot{\sigma}^\la(t)-\mathbb{A}_r\dot{\tau}^\la(t), \sigma^\la(t)-\tau^\la(t) \rangle+\frac{1}{12} \langle \mathbb{A}_r \dot{\hat {\sigma}}^\la(t)-\mathbb{A}_r\dot{\hat{\tau}}^\la(t), D^2 u_3^\la(t)-D^2 v_3^\la(t) \rangle  \leq 0.$$
We integrate in the time interval $[0,T]$ and recalling \eqref{Acoercivity} we get
$$ \alpha_\mathbb{A} \Vert \sigma^\la(t)-\tau^\la(t)  \Vert_{L^2}^2+\frac{1}{2} \Vert \dot{u}_3^\la (t)-\dot{v}_3^\la (t) \Vert_{L^2}^2 \leq 0. $$
Then $\sigma^\la (t)=\tau^\la(t)$ and $u_3^\la (t)=v_3^\la (t)$ for every $t \in [0,T]$. Thanks to the first condition in \eqref{NHLambdaproblem} we get  $Eu^\la (t)=Ev^\la (t)$. Since $u^\la (t)=v^\la (t)$ on $\Gd$, by Korn inequality we conclude that $u^\la(t)=v^\la (t)$ for every $t \in [0,T]$.

\medbreak
\noindent\textit{Step 6: Conclusion. }
The estimates \eqref{firstlambdaestimate} and \eqref{secondlambdaestimate} are obtained by simply using \eqref{convuk}-\eqref{convpk} to pass to the limit as $k \to +\infty$ in \eqref{primastimapriori}, \eqref{secondprioriestimate}. Similarly, starting from \eqref{thirdprioriestimate} we infer \eqref{secondlambdaestimate*}.
\end{proof}

Now we are ready to prove the existence and uniqueness of solutions for an approximating problem of Norton--Hoff type.

\begin{theorem} \label{TheoremNHoff}
Assume that all the hypotheses of Lemma \ref{Lemmalambda} are satisfied.
 Then for every $N \in \mathbb{N}$, $N \geq 4$, the problem
\begin{equation}
\begin{cases} \label{NHproblem}
\mathbb{A}_r \dot{\sigma} (t)+D \phi_N(\sigma(t))= E \dot{u} (t) \mbox{ in } \Om, \\
-\Div \bar{\sigma}(t)=f(t), \quad \ddot{u}_3(t)-\frac{1}{12} \Div \Div \hat{\sigma}(t)=g(t) \mbox{ in } \om, \\
\sigma(t) \in \Theta (\gn,0,0), \\
u(t)=w(t) \mbox{ on } \Gd,
\end{cases}
\end{equation}
has a unique solution $(\sigma^N,u^N)$ with
\begin{align*}
&\sigma^N \in  H^1([0,T];L^2(\Om; \Mtwo)) \cap L^N ([0,T]; L^N (\Om; \Mtwo)), \\
&u^N \in W^{1,N'}([0,T]; W^{1,N'}(\Om; \R^3) \cap KL(\Om)),\\
&u_3^N\in  H^2([0,T];L^2(\om)),
\end{align*}
and $(u(0),\sigma(0))=(u_0,\sigma_0) $ and $\dot{u}_3(0)=(v_0)_3$.
Moreover the following estimates hold:
\begin{align} \label{stimasigmaW1infty}
&\Vert \sigma^N \Vert_{ H^1(L^2)} \leq C, \\ \label{stimasigmalinftylN}
&\Vert \sigma^N \Vert_{L^N(L^N)} \leq C \al_0^{N-1}, \\
&\Vert u_3^N \Vert_{ H^2 (L^2)} \leq C,\label{4.48}\\
& \Vert u^N \Vert_{ H^1(BD)} \leq C, \label{stimauN} 
\end{align}
where $C>0$ is a constant independent of $N$.
\end{theorem}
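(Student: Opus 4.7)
The plan is to obtain $(\sigma^N,u^N)$ as the limit, as $\la\to+\infty$, of the solutions $(\sigma^\la,u^\la)$ provided by Lemma \ref{Lemmalambda}. The key observation is that the bounds \eqref{firstlambdaestimate} and \eqref{secondlambdaestimate} are uniform in $\la$ (and in $N$), so standard weak compactness gives, up to a subsequence not relabelled, $\sigma^\la\wto\sigma^N$ weakly in $H^1([0,T];L^2(\Om;\Mtwo))$, $u_3^\la\wto u_3^N$ weakly in $H^2([0,T];L^2(\om))$, $u^\la\wto u^N$ weakly in $W^{1,N'}([0,T];W^{1,N'}(\Om;\R^3)\cap KL(\Om))$, and $D\psi_\la(\sigma^\la)\wto\dot p^N$ weakly in $L^{N'}([0,T]\times\Om;\Mtwo)$. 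The linearity of the equilibrium equations, of the boundary conditions $\sigma^N(t)\in\Theta(\gn,0,0)$ and $u^N(t)=w(t)$ on $\Gd$, of the kinematic admissibility $E\dot u^N=\mathbb{A}_r\dot\sigma^N+\dot p^N$, and of the initial conditions allow one to pass all of them directly to the limit.

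The main work is the identification of the flow rule $\dot p^N=D\phi_N(\sigma^N)$, which I would carry out by applying Proposition \ref{mintytrick} to the maximal monotone operator $A=D\Phi_N$ on $L^N([0,T]\times\Om;\Mtwo)$. The explicit formulas for $D\psi_\la$ and $D\phi_N$ give $|D\psi_\la(\xi)|\leq|D\phi_N(\xi)|$ for every $\xi$, and $D\psi_\la(\xi)\to D\phi_N(\xi)$ pointwise; thus, by dominated convergence, $D\psi_\la(\tau)\to D\phi_N(\tau)$ strongly in $L^{N'}$ for every $\tau\in L^\infty([0,T]\times\Om;\Mtwo)$. Combined with the weak convergences above, the monotonicity of $D\psi_\la$ (via the inequality $\int\langle D\psi_\la(\sigma^\la)-D\psi_\la(\tau),\sigma^\la-\tau\rangle\geq0$) reduces the Minty hypothesis to showing
\[
\limsup_{\la\to+\infty}\int_0^T\langle D\psi_\la(\sigma^\la),\sigma^\la\rangle\,dt \;\leq\; \int_0^T\langle\dot p^N,\sigma^N\rangle\,dt.
\]

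Proving this $\limsup$ inequality is the step I expect to be the main obstacle, and I would establish it by closely mimicking Step 4 of the proof of Lemma \ref{Lemmalambda}. Testing $\mathbb{A}_r\dot\sigma^\la+D\psi_\la(\sigma^\la)=E\dot u^\la$ against $\sigma^\la$ and integrating in time yields
\[
\int_0^T\langle D\psi_\la(\sigma^\la),\sigma^\la\rangle\,dt = \int_0^T\langle E\dot u^\la,\sigma^\la\rangle\,dt -\tfrac12\langle\mathbb{A}_r\sigma^\la(T),\sigma^\la(T)\rangle +\tfrac12\langle\mathbb{A}_r\sigma_0,\sigma_0\rangle;
\]
the equilibrium conditions $-\Div\bar\sigma^\la=f$ and $\ddot u_3^\la-\tfrac{1}{12}\Div\Div\hat\sigma^\la=g$, together with integrations by parts in space and time as in Lemma \ref{Lemmalambda}, rewrite the mixed term $\int_0^T\langle E\dot u^\la,\sigma^\la\rangle\,dt$ as a combination of inertial, boundary, external-force and quadratic contributions whose $\limsup$ is controlled through weak lower semicontinuity of the positive quadratic form $\sigma\mapsto\tfrac12\langle\mathbb{A}_r\sigma,\sigma\rangle$ and the various weak convergences already obtained. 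Matching the resulting expression with the identity $\int_0^T\langle\dot p^N,\sigma^N\rangle\,dt=\int_0^T\langle E\dot u^N-\mathbb{A}_r\dot\sigma^N,\sigma^N\rangle\,dt$, valid at the limit by kinematic admissibility and the limiting equilibrium, delivers the required inequality.

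Uniqueness follows verbatim from Step 5 of the proof of Lemma \ref{Lemmalambda}, replacing $D\psi_\la$ by $D\phi_N$ and using its monotonicity, together with \eqref{Acoercivity} and Korn's inequality. The bounds \eqref{stimasigmaW1infty}, \eqref{stimauN} and the control on $u_3^N$ in $H^2([0,T];L^2(\om))$ are inherited from \eqref{firstlambdaestimate}--\eqref{secondlambdaestimate} by weak lower semicontinuity of norms; finally, once the flow rule is established, \eqref{stimasigmalinftylN} follows from the identity $|D\phi_N(\sigma^N)|^{N'}=|\sigma^N|_r^N/\al_0^N$ and the $L^{N'}$ bound on $\dot p^N$ inherited from \eqref{firstlambdaestimate}.
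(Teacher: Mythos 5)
Your overall strategy — extract weak limits from the $\lambda$-approximations, pass the linear structure to the limit, identify $\dot p^N=D\phi_N(\sigma^N)$ by monotonicity of $D\psi_\la$ plus the fact that $D\psi_\la(\tau)=D\phi_N(\tau)$ for $\la>\|\tau\|_{L^\infty}$, and prove the $\limsup$ inequality by integration by parts as in Step 4 of Lemma \ref{Lemmalambda} — is the same as the paper's. So is the uniqueness argument, and the derivation of \eqref{stimasigmaW1infty} and \eqref{4.48} by lower semicontinuity of norms from \eqref{firstlambdaestimate}--\eqref{secondlambdaestimate}. Your route to \eqref{stimasigmalinftylN} differs from the paper's (the paper uses the truncation $\tau^\la=\chi_{\{|\sigma^\la|_r\le\la\}}\sigma^\la$ and \eqref{stimatroncata} \emph{before} identifying the flow rule, whereas you propose to read off the $L^N$ bound from the established flow rule and the $L^{N'}$ bound on $\dot p^N$); your version gives $\int|\sigma^N|_r^N\le C\al_0^N$ rather than $C\al_0^{N-1}$, which is slightly weaker but still adequate for the application in Theorem \ref{mainresult}.

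The genuine gap is your treatment of \eqref{stimauN}, i.e.\ $\|u^N\|_{H^1(BD)}\le C$ with $C$ independent of $N$. You claim this is inherited from \eqref{firstlambdaestimate}--\eqref{secondlambdaestimate} by lower semicontinuity, but those estimates do not contain such a bound. The bound on $D\psi_\la(\sigma^\la)$ in \eqref{firstlambdaestimate} is only in $L^{N'}(L^{N'})$ with $N'=N/(N-1)\to1$, which (as the Remark after Lemma \ref{Lemmalambda} makes explicit) yields at best $\|u^\la\|_{W^{1,1}(BD)}\le C$, and \eqref{thirdestimate} gives control only in $W^{1,N'}(W^{1,N'})$. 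Neither is an $L^2$-in-time $BD$ bound uniform in $N$. The paper proves \eqref{stimauN} by a separate argument (its Step 3): it tests the kinematic compatibility equation $\mathbb{A}_r\dot\sigma^N+D\phi_N(\sigma^N)=E\dot u^N$ against $\sigma^N(t)-\varrho(t)$, integrates by parts using the equations of motion, and invokes the uniform safe-load condition \eqref{regularityvarrho*} — namely $|\varrho|_r\le\al_0(1-\gamma)$ — to extract a coercive term and conclude
\[
\int_\Om|D\phi_N(\sigma^N(t))|\,dx\le C\bigl(\|\ddot u_3^N(t)\|_{L^2}+\|\dot\sigma^N(t)\|_{L^2}+1\bigr),
\]
so that $(D\phi_N(\sigma^N))_N$ is bounded in $L^2([0,T];L^1(\Om;\Mtwo))$ uniformly in $N$. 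Combined with \eqref{stimasigmaW1infty} this bounds $E\dot u^N$ in $L^2([0,T];M_b)$, and the Korn--Poincar\'e inequality in $BD(\Om)$ then yields \eqref{stimauN}. You need to supply this argument; without it the stated $H^1(BD)$ bound, and hence the weak$^*$ convergence $u^N\wto u$ in $H^1([0,T];BD)$ used later in Theorem \ref{mainresult}, is not justified.
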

\begin{proof} \textit{Step 1: passage to the limit as $\la\rightarrow+\infty$.} Let $(u^\la, \sigma^\la)$ as in the thesis of Lemma \ref{Lemmalambda}. Since  \eqref{firstlambdaestimate} and \eqref{secondlambdaestimate} are uniform estimates with respect to $\la$, we can pass to the limit as   $\la \to +\infty$ and deduce the existence of
$$ (\sigma^N, u^N ) \in H^1([0,T]; L^2(\Om; \Mtwo)) \times W^{1,{N'}}([0,T]; W^{1,{N'}}(\Om; \R^3) \cap KL (\Om)) $$
with
 $$ u_3^N \in H^2([0,T]; L^2(\om)) $$
 such that, up to subsequences,
 \begin{align} \label{convsigmalasigmaN}
&\sigma^\la \wto \sigma^N \mbox{ weakly in }  H^1([0,T]; L^2(\Om; \Mtwo)), \\ \label{convulauN}
&u^\la\wto u^N \mbox{ weakly in } W^{1,N'}([0,T]; W^{1,N'}(\Om; \R^3)),\\
&u_3^\la \wto u_3^N \mbox{ weakly in }  H^2([0,T]; L^2(\om)).\label{convu_3infinito}
 \end{align} 
In particular   \eqref{stimasigmaW1infty} and \eqref{4.48} hold true thanks to \eqref{secondlambdaestimate}. 
 By \eqref{firstlambdaestimate} there exists 
 $$\beta^N \in L^{{N'}}([0,T]; L^{{N'}}(\Om; \Mtwo)), $$ 
satisfying, up to a subsequence,
\begin{equation} \label{convDpsilambdaN}
D \psi_\lambda (\sigma^\la) \wto \beta^N \quad \mbox{ weakly in } L^{{N'}}([0,T]; L^{{N'}}(\Om; \Mtwo)). 
\end{equation}
Passing to the limit in the first equation of \eqref{NHLambdaproblem} we get 
\begin{align}\label{+}
\mathbb{ A}_r \dot{\sigma}^N (t)+ \beta^N (t)= E \dot{u}^N (t) \mbox{ a.e. in } \Om, 
\end{align}
for a.e. $t \in [0,T]$.
By \eqref{convsigmalasigmaN} and \eqref{convulauN} we can pass to the limit in \eqref{variationalcontinuousmotion} as $\la$ tends to $+\infty$ and infer that 
\begin{equation} \label{variationalmotionN}
\int_{0}^T \langle \ddot{u}_3^N(t), \varphi_3\rangle \psi(t) dt+\int_{0}^T \langle \sigma^N (t), E \varphi\rangle \psi(t) dt=\int_0^T\langle \mathcal L(t),\varphi(t)\rangle \psi(t)dt,
\end{equation}
for every $\varphi \in H^1(\Om; \R^3) \cap KL(\Om)$ with $\varphi=0$ on $\Gd$, and every $\psi\in C^0([0,T])$. Therefore the equation of motions holds true  together with corresponding boundary conditions, as a consequence of \eqref{traccia0mom} and \eqref{traccia1mom}.

 Let us now show the estimate \eqref{stimasigmalinftylN}.
First of all, we multiply the equation $E\dot u^\lambda=\mathbb A_r\dot\sigma^\lambda+D\psi_\lambda(\sigma^\lambda)$ by $\sigma^\lambda$ and integrate in $[0,T]\times \Omega$; integrating by parts the term $\int_0^T\langle E\dot u^\la,\sigma^\la\rangle dt$ and exploiting \eqref{firstlambdaestimate} and the estimates on $w$ and the external forces, we arrive at
\begin{equation} \label{stimatroncata}
\frac{1}{\al_0^{N-1}} \int_0^T \int_\Om \left( \vert \sigma^\la (t,x) \vert_r^N \wedge \la^{N-2} \vert \sigma^\la (t,x) \vert_r^2  \right) dx dt \leq C,
\end{equation}
where $C$ is a positive constant independent of $\la$ and $N$. Set $\tau^\la:= \chi_{\lbrace \vert \sigma^\la \vert_r \leq \la \rbrace} \sigma^\la$. From \eqref{stimatroncata} it follows that 
\begin{equation} \label{stimatauNNtroncata}
\frac{1}{\al_0^{N-1}} \int_0^T \int_\Om \vert \tau^\la (t,x) \vert_r^N dx dt \leq C.
\end{equation} 
Using \eqref{stimatroncata} again, we infer
\begin{align*}
\int_0^T \int_\Om \vert \sigma^\la (t,x)-\tau^\la (t,x) \vert_r^2 dx dt=\int_0^T \int_{ \lbrace \vert \sigma^\la \vert_r \geq \la \rbrace} \vert \sigma^\la (t,x) \vert_r^2 dx dt \leq \frac{C \al_0^{N-1}}{\la^{N-2}},
\end{align*}
where the equality derives from the definition of $\tau^\la$. Hence
$$ \sigma^\la-\tau^\la \to 0 \mbox{ strongly in } L^2([0,T];L^2(\Om; \Mtwo)), $$
and then
\begin{equation}\label{348}
\tau^\la \wto \sigma^N \mbox{ weakly in } L^2([0,T];L^2(\Om; \Mtwo)).
\end{equation}
This, together with \eqref{convsigmalasigmaN} and \eqref{stimatauNNtroncata}, implies that $\sigma^N \in L^N([0,T]; L^N(\Om;\Mtwo))$ and   
\begin{equation*}
\tau^\la \wto \sigma^N \mbox{ weakly in } L^N([0,T];L^N(\Om; \Mtwo)).
\end{equation*}
Morever, by \eqref{stimatauNNtroncata}, the inequality in \eqref{stimasigmalinftylN} is achieved.

\medbreak
\noindent\textit{Step 2: identification of $\beta^N$.} To conclude that $(\sigma^N, u^N)$ satisfies \eqref{NHproblem}, we have to show that for a.e. $t \in [0,T]$
\begin{equation} \label{gammaNdphiN}
\beta^N (t)=D \phi_N (t) \mbox{ a.e. in }  \Om.
\end{equation}
First we show that
\begin{equation} \label{limsupDpsilaineq}
\limsup_{\la \to +\infty} \int_0^T \langle D \psi_\la (\sigma^\la (t)), \sigma^\la (t) \rangle dt \leq \int_0^T \langle \beta^N(t), \sigma^N (t) \rangle dt.
\end{equation}
Indeed, owing to  \eqref{convsigmalasigmaN}, \eqref{convulauN}, \eqref{convu_3infinito},  and \eqref{convDpsilambdaN}, we obtain
\begin{align*}
&\limsup_{\la \to +\infty} \int_0^T \langle D \psi_\la (\sigma^\la (t)), \sigma^\la (t) \rangle dt \\
&\leq\limsup_{\la \to +\infty} - \int_0^T \langle \mathbb{A}_r \dot{\sigma}^\la (t),\sigma^\la (t) \rangle dt+\limsup_{\la \to +\infty} \int_0^T \langle E \dot{u}^\la (t),  \sigma^\la (t)\rangle dt \\
& \leq \limsup_{\la \to +\infty} \Big( -\frac{1}{2} \langle \mathbb{A}_r \sigma^\la (T), \sigma^\la (T) \rangle+\frac{1}{2} \langle \mathbb{A}_r \sigma_0 , \sigma_0 \rangle\Big)+ \limsup_{\la \to +\infty} \Big(  -\frac{1}{2} \Vert \dot{u}_3^\la (T)-\dot{w}_3 (T) \Vert_{L^2}^{2}\\
& \;\;+\frac{1}{2} \Vert (v_0)_3-\dot{w}_3 (0) \Vert_{L^2}^{2}-\int_0^T \langle \ddot{w}_3 (t),\dot{u}_3^\la(t)-\dot{w}_3(t) \rangle dt +\int_0^T \langle \sigma^\la (t), E \dot{w}(t) \rangle dt   \Big)\\
&\;\;+ \int_0^T\langle \dot u_3^\la(t)-\dot w_3(t),g(t)\rangle dt+\int_0^T\langle \dot{\bar u}^\la(t)-\dot{\bar w}(t),f(t)\rangle dt\\
& \leq   -\frac{1}{2} \langle \mathbb{A}_r \sigma^N (T), \sigma^N (T) \rangle+\frac{1}{2} \langle \mathbb{A}_r \sigma_0 , \sigma_0 \rangle+\int_0^T \langle \sigma^N (t), E \dot{w}(t) \rangle dt   \\
&  \;\; -\frac{1}{2} \Vert \dot{u}_3^N (T)-\dot{w}_3 (T) \Vert_{L^2}^{2}+\frac{1}{2} \Vert (v_0)_3-\dot{w}_3 (0) \Vert_{L^2}^{2}-\int_0^T \langle \ddot{w}_3 (t),\dot{u}_3^N(t)-\dot{w}_3(t) \rangle dt \\
&\;\;+ \int_0^T\langle \dot u_3^N(t)-\dot w_3(t),g(t)\rangle dt+\int_0^T\langle \dot{\bar u}^N(t)-\dot{\bar w}(t),f(t)\rangle dt= \int_0^T \langle \beta^N(t), \sigma^N (t) \rangle dt,
\end{align*}
where in the last equality we  have used the second equation in \eqref{NHproblem} and \eqref{+}. We have proved \eqref{limsupDpsilaineq}. Let us fix $\tau \in L^\infty([0,T] \times \Om; \Mtwo)$. By convexity of $ \psi_\la$ we have
$$\int_0^T \langle D \psi_\la (\sigma^\la (t))-D \psi_\la (\tau(t)), \sigma^\la (t)-\tau (t) \rangle dt \geq 0,$$
hence using that $D \psi_\la (\tau (t))= D\phi_N (\tau (t))$ for $\la > \Vert \tau \Vert_{L^\infty}$, convergences \eqref{convsigmalasigmaN}   and \eqref{convDpsilambdaN} ensure
\begin{equation*}
\liminf_{\la \to +\infty} \int_0^T \langle D \psi_\la (\sigma^\la (t)), \sigma^\la (t)\rangle dt \geq \int_0^T \langle \beta^N (t), \tau (t) \rangle dt+\int_0^T \langle D \phi_N (\tau (t)), \sigma^N (t)-\tau (t) \rangle dt.
\end{equation*}
Combining this inequality with \eqref{limsupDpsilaineq} we get 
\begin{equation*}
\int_0^T \langle \beta^N (t)- D \phi_N (\tau (t)), \sigma^N (t)-\tau (t)\rangle dt \geq 0
\end{equation*}  
for every $\tau \in L^\infty ((0,T) \times \Om; \Mtwo)$. Since $D\phi_N$ is a maximal monotone operator and $\tau$ is arbitrary, \eqref{gammaNdphiN} is satisfied.

\medbreak
\noindent\textit{Step 3: Estimate \eqref{stimauN}.}
Following the lines of the proof in \cite[Theorem 4.2]{DavMor2} we test the kinematic compatibility at time $t$ by $\sigma^N(t) -\varrho(t)$. Integrating by parts we arrive at
\begin{align}
 &\langle D \phi_N(\sigma^N(t)),\sigma^N (t) -\varrho(t)\rangle\nonumber\\
 &=-\langle\dot u_3^N(t)-\dot w_3(t),\ddot u_3^N(t)\rangle+\langle E\dot w(t),\sigma^N(t)-\varrho(t)\rangle-\langle\mathbb A_r\dot \sigma^N(t),\sigma^N(t)-\varrho(t)\rangle\nonumber\\
 &\leq C(\|\ddot u_3^N(t)\|_{L^2}+\|\dot \sigma^N(t)\|_{L^2}+1),
\end{align}
where we have used \eqref{stimasigmaW1infty} and \eqref{4.48}.
Arguing as in \cite[Theorem 4.2]{DavMor2} we finally infer
\begin{align}
 \int_\Om|D\phi_N(\sigma^N(t))|dx\leq C(\|\ddot u_3^N(t)\|_{L^2}+\|\dot \sigma^N(t)\|_{L^2}+1),
\end{align}
which from estimates \eqref{stimasigmaW1infty} and \eqref{4.48} implies that $(D\phi_N(\sigma^N(t)))_N$ is uniformly bounded in $L^2([0,T];L^1(\Om;\Mtwo))$. This together with \eqref{stimasigmaW1infty} concludes the proof of \eqref{stimauN} by 
 Korn--Poincar\'e inequality in $BD(\Om)$. 

\medbreak
\noindent\textit{Step 4: uniqueness.} The proof of uniqueness is very similar to the one in Step 5 of Lemma \ref{Lemmalambda} that can be easily adapted.

\end{proof}

\begin{theorem} \label{TheoremNHoff2}
In the hypotheses of Theorem \ref{TheoremNHoff}, assume in addition that the force $f$ is independent of time $t$. Then the solution $(\sigma^N,u^N)$ also satisfies
\begin{align*}
&\sigma^N \in W^{1,\infty}([0,T];L^2(\Om; \Mtwo)) \cap L^\infty ([0,T]; L^N (\Om; \Mtwo)), \\
&u_3^N\in W^{2,\infty}([0,T];L^2(\om)).
\end{align*}
and the following estimates:
\begin{align} \label{stimasigmaW1inftybis}
&\Vert \sigma^N \Vert_{W^{1,\infty}(L^2)} \leq C, \\ \label{stimasigmalinftylNbis}
&\Vert \sigma^N \Vert_{L^\infty(L^N)} \leq C N \al_0^{N-1}  \\ \label{stimauNbis}
& \Vert u^N \Vert_{W^{1,\infty}(BD)} \leq C, \quad \Vert u_3^N \Vert_{W^{2,\infty}(L^2)} \leq C,
\end{align}
where $C>0$ is a constant independent of $N$.
\end{theorem}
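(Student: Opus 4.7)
My plan is to inherit from Lemma \ref{Lemmalambda} the uniform-in-$\la$ estimate \eqref{secondlambdaestimate*}, which becomes available precisely because $f$ is time-independent, and to carry it through the double limit $k\to+\infty$, $\la\to+\infty$ that produced Theorem \ref{TheoremNHoff}. Together with the convergences established there, \eqref{secondlambdaestimate*} lets me extract further subsequences with $\sigma^\la\to \sigma^N$ weakly$^*$ in $W^{1,\infty}([0,T]; L^2(\Om;\Mtwo))$ and $u_3^\la\to u_3^N$ weakly$^*$ in $W^{2,\infty}([0,T]; L^2(\om))$, so that \eqref{stimasigmaW1inftybis} and the $W^{2,\infty}(L^2)$ part of \eqref{stimauNbis} follow by weak$^*$ lower semicontinuity.

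For the pointwise-in-time bound \eqref{stimasigmalinftylNbis} I would test the constitutive equation $\mathbb A_r\dot\sigma^\la + D\psi_\la(\sigma^\la)=E\dot u^\la$ against $\dot\sigma^\la$ in $\Om$ and integrate in $[0,t]$; using the convexity inequality $D\psi_\la(\sigma^\la):\dot\sigma^\la\geq \frac{d}{dt}\psi_\la(\sigma^\la)$ together with $|\sigma_0|_r\leq \al_0$ (which gives $\Psi_\la(\sigma_0)\leq \al_0|\Om|/N$), I would obtain
\[
\Psi_\la(\sigma^\la(t)) \leq \frac{\al_0|\Om|}{N} + \int_0^t \langle E\dot u^\la, \dot\sigma^\la\rangle\, ds.
\]
To bound the last integral I would work at the implicit Euler level and use a time-differenced form of the variational identity \eqref{variationalmotion}: writing that identity at two consecutive steps $i-1$ and $i$, testing each with the same function $\dot u_k^i - \dot w_k^i$ (which vanishes on $\Gd$), and subtracting produces an expression for $\langle \dot\sigma_k^i, E\dot u_k^i\rangle$ whose summation over $i$, after a discrete summation-by-parts, is controlled by $\|\dot\sigma^\la\|_{L^\infty(L^2)}$, $\|\ddot u_3^\la\|_{L^\infty(L^2)}$ and the regularity of $w$ and $g$. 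Here the hypothesis $\dot f=0$ is crucial, as it eliminates the horizontal contribution $\langle \dot f, \dot{\bar u}-\dot{\bar w}\rangle$ that would otherwise be uncontrolled. Passing to the limit $k\to+\infty$ yields $\Psi_\la(\sigma^\la(t))\leq C$ uniformly in $t,\la,N$, and adapting the $L^2$-truncation already used for \eqref{stimasigmalinftylN} then transfers the bound to $\int_\Om |\sigma^N(t)|_r^N\,dx \leq CN\al_0^{N-1}$ for every $t$.

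Finally, for the $W^{1,\infty}(BD)$ part of \eqref{stimauNbis} I would mirror Step 3 of the proof of Theorem \ref{TheoremNHoff} pointwise in time: testing the kinematic admissibility at time $t$ against $\sigma^N(t)-\varrho(t)$ yields
\[
\int_\Om |D\phi_N(\sigma^N(t))|\, dx \leq C(\|\ddot u_3^N(t)\|_{L^2} + \|\dot\sigma^N(t)\|_{L^2} + 1),
\]
uniformly bounded in $t$ and $N$ by the first step; combined with $E\dot u^N=\mathbb A_r\dot\sigma^N + D\phi_N(\sigma^N)$, the Dirichlet condition on $\Gd$ and the Korn--Poincar\'e inequality in $BD$, this yields $\|\dot u^N\|_{L^\infty(BD)}\leq C$. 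The principal obstacle is that the natural continuous version of the second step formally involves the third time derivative of $u_3^\la$, which is not a priori in a standard Bochner space (since $u_3^\la$ is only in $W^{2,\infty}(L^2)$); performing the whole computation at the Euler-discretised level and passing to the limits $k,\la\to+\infty$ only at the end is the cleanest way to circumvent this issue.
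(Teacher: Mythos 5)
Your proposal matches the paper's proof in substance: you inherit \eqref{secondlambdaestimate*} from the $\la$-level to get \eqref{stimasigmaW1inftybis} and the $W^{2,\infty}(L^2)$ bound on $u_3^N$ by weak$^*$ lower semicontinuity; you obtain the uniform-in-time bound on $\Psi_\la(\sigma^\la)$ by testing the constitutive relation against $\dot\sigma^\la$ at the Euler-discretised level and using the discrete motion equation (the paper phrases this as multiplying the discrete constitutive identity by $\sigma_k^i-\sigma_k^{i-1}$ and "integrating by parts", which is the same manipulation as your time-differenced variational identity), exploiting $\dot f=0$ to kill the $\langle E\dot{\bar v}-E\dot{\bar\omega},\dot{\bar\varrho}\rangle$ contribution, and then truncating as in \eqref{stimasigmalinftylN}; and you get \eqref{stimauNbis} by the same pointwise-in-time duality test against $\sigma^N(t)-\varrho(t)$ followed by Korn--Poincar\'e. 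Your remark about carrying out the argument entirely at the discrete level to avoid invoking $\dddot{u}_3^\la$ is exactly the care the paper takes as well.
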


\begin{remark}
Since $u^N$ is a Kirchhoff--Love displacement,  the first estimate in \eqref{stimauNbis} together with the continuous immersion of $BD(\om)$ into $L^2(\om; \R^2)$ and of $BH(\om)$ into $H^1(\om)$ implies that
\begin{equation} \label{estimateUNkirchoff}
\Vert {u}^N \Vert_{W^{1,\infty}(L^2)}+\Vert u_3^N \Vert_{W^{1,\infty}(H^1)} \leq C,
\end{equation}
where $C$ is a positive constant independent of $N$.
\end{remark}

\begin{proof}
The proof follows the same lines of that of Theorem \ref{TheoremNHoff}, with the following changes. In step one we observe that, thanks to estimate \eqref{secondlambdaestimate*} we conclude estimate \eqref{stimasigmaW1inftybis} and the second one in \eqref{stimauNbis}, which in particular imply the additional regularity 
$$ \sigma^N \in W^{1,\infty}([0,T]; L^2(\Om; \Mtwo)), $$
and
 $$ u_3^N \in W^{2,\infty}([0,T]; L^2(\om)). $$
We now prove \eqref{stimasigmalinftylNbis}.   Let us multiply the second equation in \eqref{discrete problem} by $\sigma_k^i-\sigma_k^{i-1}$. This yields
$$ \langle \mathbb{A}_r \frac{\sigma_k^i-\sigma_k^{i-1}}{\delta_k}, \sigma_k^i-\sigma_k^{i-1} \rangle+ \langle D \psi_\la  (\sigma_k^i), \sigma_k^i-\sigma_k^{i-1} \rangle= \langle  \frac{E u_k^i-E u_k^{i-1}}{\delta_k}, \sigma_k^i-\sigma_k^{i-1} \rangle. $$
It is clear that the first term on the left-hand side is positive, while
$$ \langle D \psi_\la  (\sigma_k^i), \sigma_k^i-\sigma_k^{i-1} \rangle \geq  \Psi_\la (\sigma_k^i) - \Psi_\la (\sigma_k^{i-1})  $$
as a consequence of the convexity of $\psi_\la$. Therefore, integrating by parts, we get
\begin{align*}
&\Psi_\la (\sigma_k^i) -\Psi_\la  (\sigma_k^{i-1})\leq  \langle \sigma_k^i-\sigma_k^{i-1}, \frac{E w_k^i- E w_k^{i-1}}{\delta_k} \rangle\\
&+ \langle E\bar v_k^{i-1}-E\bar\omega_k^{i-1},\bar \varrho_k^i-\bar \varrho_k^{i-1}\rangle+\langle ( v_3)_k^{i-1}-( \om_3)_k^{i-1},g_k^i-g_k^{i-1} \rangle\\
&\;\;- \langle \frac{(v_3)_k^{i-1}-2(v_3)_k^{i-2}+(v_3)_k^{i-3}}{\delta_k}, ( v_3)_k^{i-1}-( \om_3)_k^{i-1} \rangle.
\end{align*}
Let $j>1$ and sum on $i=1,\dots,j$. Rearranging terms we infer
\begin{align}\label{sum}
&\Psi_\la (\sigma_k^j) -\Psi_\la (\sigma_0) \leq \sum_{i=2}^j \langle{\sigma_k^i-\sigma_k^{i-1}}, E \om_k^{i-1} \rangle\nonumber\\
&+ \langle (v_3)_k^{j-1}-(\omega_3)_k^{j-1}, \frac{(v_3)_k^{j-1}-(v_3)_k^{j-2}}{\delta_k}-\frac{(\om_3)_k^{j-1}-(\om_3)_k^{j-2}}{\delta_k} \rangle\nonumber \\
&-\sum_{i=1}^{j} \langle \frac{(v_3)_k^{i-1}-(v_3)_k^{i-2}}{\delta_k}- \frac{(\om_3)_k^{i-1}-(\om_3)_k^{i-2}}{\delta_k}, (v_3)_k^{i-1}-(v_3)_k^{i-2}-(\om_3)_k^{i-1}+(\om_3)_k^{i-2}  \rangle \nonumber\\
&-\langle (v_3)_k^{0}-(\om_3)_k^{0}, \frac{(v_3)_k^{0}-(v_3)_k^{-1}}{\delta_k}-\frac{(\om_3)_k^{0}-(\om_3)_k^{-1}}{\delta_k} \rangle\nonumber\\
&- \sum_{i=1}^{j} \langle \frac{(\om_3)_k^{i-1}-2(\om_3)_k^{i-2}+(\om_3)_k^{i-3}}{\delta_k}, ( v_3)_k^{i-1}-( w_3)_k^{i-1} \rangle\nonumber\\
& +\sum_{i=1}^{j}\langle ( v_3)_k^{i-1}-( w_3)_k^{i-1},g_k^i-g_k^{i-1} \rangle+\sum_{i=1}^{j}\langle E\bar v_k^{i-1}-E\bar\omega_k^{i-1},\bar \varrho_k^i-\bar \varrho_k^{i-1}\rangle.
\end{align}
Hence, since $f$ is independent of time the last term is null and we can use  \eqref{secondlambdaestimate*} to estimate the right-hand side as
\begin{align*}
&\Psi_\la(\sigma_k(t))  \leq \Psi_\la (\sigma_0)+\Vert {\dot{\tilde{\sigma}}_k} \Vert_{L^2(L^2)}\Vert E \dot{w} \Vert_{L^2(L^2)}+\Vert (\tilde{v}_3)_k-\dot w_3 \Vert_{L^\infty(L^2)}\Vert (\dot{\tilde{ v}}_3)_k-\ddot w_3 \Vert_{L^\infty(L^2)} \\
& +\Vert (\dot{\tilde v}_3)_k-\ddot w_3 \Vert_{L^2(L^2)}^2 + \Vert \dddot{w}_3 \Vert_{L^1(L^2)} \Vert (\tilde{v}_3)_k-\dot w_3)_k \Vert_{L^\infty(L^2)}+\|(\tilde{v}_3)_k-\dot w_3 \|_{L^\infty(L^2)}\|\dot g\|_{L^1(L^2)}.
\end{align*}
By \eqref{primastimapriori},  \eqref{secondprioriestimate}, and \eqref{thirdprioriestimate}, we can pass  to the limit as $k \to +\infty$ and we get    
\begin{equation} \label{supsigmala}
\sup_{t \in [0,T]} \Psi_\la (\sigma^\la (t)) \leq C,
\end{equation}
for a constant $C$ independent of $N$.
Consequently 
\begin{equation} 
\frac{1}{ N \al_0^{N-1}} \sup_{t \in [0,T]} \int_\Om \vert \tau^\la (t,x) \vert_r^N dx  \leq C,
\end{equation}
where $\tau^\la$ has been first used in \eqref{stimatauNNtroncata}.
Therefore convergence \eqref{348} also takes place with respect to the weak star topology of the space $L^\infty([0,T];L^N(\Om; \Mtwo))$, so that $\sigma \in L^\infty([0,T];L^N(\Om; \Mtwo)) $ and  \eqref{stimasigmalinftylNbis} holds true.

Let us prove the first estimate in \eqref{stimauNbis}. We test the kinematic compatibility  by $\sigma^N$. We arrive at
\begin{align*} 
&\int_\Omega \vert D \phi_N (\sigma^N(t,x)) \vert dx\leq \|g(t)\|_{L^2}\|\dot u_3^N(t)-\dot w_3(t)\|_{L^2} \\
& + \Vert \sigma^N (t) \Vert_{L^2} ( \Vert \dot{\sigma}^N (t) \Vert_{L^2}+  \Vert E \dot{w} (t) \Vert_{L^2} )+\Vert \ddot{u}_3^N (t) \Vert_{L^2} \Vert \dot{u}_3^N(t)-\dot{w}_3(t) \Vert_{L^2}.
\end{align*}
We now observe that the estimates \eqref{stimasigmaW1inftybis} and the second in \eqref{stimauNbis} imply that the right-hand side is uniformly bounded. Hence we conclude
$$ \Vert D \phi_N (\sigma^N) \Vert_{L^\infty (L^1)} \leq C$$
for every $N \geq 4$, where $C$ is independent of $N$. Therefore 
$$ \Vert E \dot{u}^N \Vert_{L^\infty (L^1)} \leq C.$$
This yields the first estimate in \eqref{stimauNbis} as a consequence of Korn--Poincar\'e inequality in $BD(\Om)$. 
\end{proof}

Now we prove additional regularity for the solution of the Norton--Hoff problem.

\begin{proposition} \label{propositionregular}
 Let $N \in \mathbb{N}$ with $N \geq 4$. Assume that all the hypotheses of Lemma  \ref{Lemmalambda} are satisfied and in addition that $$\sigma_0 \in H^1(\Om; \Mtwo).$$ Then the stress component $\sigma^N$ and the velocity $\dot u_3^N$ satisfy the following estimates:
\begin{itemize}
\item for every open set $\om'$ compactly contained in $\om$ there exists a constant $C_1>0$ depending on $\om'$ but independent of $N$ such that
\begin{equation} \label{regestimatesigmaalN}
\sup_{t \in [0,T]} \Vert D_\alpha \sigma^N (t) \Vert_{L^2 ( \om ' \times \left( -\frac{1}{2},\frac{1}{2} \right))} \leq C_1,
\end{equation}   
\item for every open set $\Om'$ compactly contained in $\Om$ there exists a constant $C_2>0$ depending on $\Om'$ but independent of $N$ such that
\begin{equation} \label{regestimatesigma3N}
\sup_{t \in [0,T]} \Vert D_3 \sigma^N (t) \Vert_{L^2 ( \Om ' )} \leq C_2.
\end{equation}
\item for every open set $\om'$ compactly contained in $\om$ there exists a constant $C_3>0$ depending on $\om'$ but independent of $N$ such that
\begin{equation} \label{reg_Du3}
\sup_{t \in [0,T]} \Vert D_\alpha \dot u_3^N (t) \Vert_{L^2 ( \om ')} \leq C_3.
\end{equation}  
\end{itemize} 
\end{proposition}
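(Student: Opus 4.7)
For each direction I plan to apply a discrete spatial difference quotient to the flow rule
\[
\mathbb A_r\dot\sigma^N+D\phi_N(\sigma^N)=E\dot u^N,
\]
test against a cutoff times the difference quotient of $\sigma^N$ itself, and derive energy estimates uniform in the step $h$; sending $h\to 0$ will yield the claimed weak derivative bounds. The $\mathbb A_r$-term always produces a coercive time derivative, whose contribution at $t=0$ is controlled by the new assumption $\sigma_0\in H^1(\Om;\Mtwo)$; the nonlinear term $\tau_h D\phi_N(\sigma^N):\tau_h\sigma^N$ is nonnegative by monotonicity of $D\phi_N$ and so is simply discarded. Thus the whole argument is driven by estimating the right-hand side $\int_0^t\langle \zeta^2 E\tau_h\dot u^N,\tau_h\sigma^N\rangle\,ds$ using the equilibrium equations in (cf3) and the regularity assumptions on $w$, $\varrho$, $f$, $g$.

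\textbf{Tangential directions.} For $\al\in\{1,2\}$ I would take $\zeta\in C_c^\infty(\om)$ with $\zeta\equiv 1$ on $\om'$ and the horizontal difference quotient $\tau^\al_h$. Since this operation preserves the Kirchhoff--Love structure, the right-hand side splits via zero-th and first moments into a stretching integral on $\om$ and a bending integral on $\om$. In the stretching part, after one integration by parts in $x'$ and invoking $-\Div\bar\sigma^N=f=-\Div\bar\varrho$, the volume contribution cancels and only a commutator term in $\nabla\zeta\odot\tau^\al_h\dot{\bar u}^N$ survives, controlled through the bounds on $u^N$ from \eqref{stimauN} and the loc-$W^{2,\infty}$ regularity of $\varrho$. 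In the bending part, two integrations by parts combined with $\ddot u_3^N-\tfrac1{12}\Div\Div\hat\sigma^N=g$ produce the decisive term
\[
-\tfrac12\tfrac{d}{dt}\int_\om\zeta^2|\tau^\al_h\dot u_3^N|^2\,dx'+(\text{cutoff remainders}),
\]
whose time integral supplies a second coercive boundary term at $t$. After absorbing remainders using \eqref{regularityw}, \eqref{regularityvarrho}, \eqref{regularityg}, I obtain
\[
\int_\Om\zeta^2|\tau^\al_h\sigma^N(t)|_r^2\,dx+\int_\om\zeta^2|\tau^\al_h\dot u_3^N(t)|^2\,dx'\le C(\om'),
\]
uniformly in $h$ and $N$; letting $h\to 0$ yields simultaneously \eqref{regestimatesigmaalN} and \eqref{reg_Du3}.

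\textbf{Direction $x_3$.} Here I would pick $\zeta\in C_c^\infty(\Om)$ and use $\tau^3_h$. Because $u^N$ is Kirchhoff--Love, a direct computation gives $(\tau^3_h\dot u^N)_\al=-\partial_\al\dot u_3^N$ and $(\tau^3_h\dot u^N)_3=0$ for every $h\neq 0$, so $E\tau^3_h\dot u^N=-D^2\dot u_3^N$ is in fact independent of $x_3$. The right-hand side is then $-\int_0^t\int_\Om\zeta^2 D^2\dot u_3^N:\tau^3_h\sigma^N\,dx\,ds$. A discrete summation by parts, valid for $|h|$ small enough since $\zeta$ has compact support in $\Om$, rewrites this as $\int_0^t\int_\Om\tau^3_{-h}(\zeta^2)\,D^2\dot u_3^N:\sigma^N\,dx\,ds$; a subsequent integration by parts in $x_\al$ transfers one derivative from $\dot u_3^N$ onto $\sigma^N$, producing a bilinear form in $\nabla'\dot u_3^N$ and $\nabla'\sigma^N$ on the support of $\zeta$. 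These quantities are exactly the ones already controlled by \eqref{regestimatesigmaalN} and \eqref{reg_Du3} from the previous step, so the right-hand side is uniformly bounded in $h$, yielding \eqref{regestimatesigma3N} in the limit.

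\textbf{Main obstacle.} The hardest point is the bending part of the tangential step: the double integration by parts of $\int\zeta^2 D^2\tau^\al_h\dot u_3^N:\tau^\al_h\hat\sigma^N\,dx'$ generates several commutator terms involving $\nabla\zeta$ and $D^2\zeta$ paired with $\tau^\al_h\hat\sigma^N$ and $\tau^\al_h\dot u_3^N$, and these must be absorbed using only the low-regularity a priori bounds of Theorems \ref{TheoremNHoff}--\ref{TheoremNHoff2}. The safe-load condition is crucial here, since it lets me replace $\bar\sigma^N$ by the divergence-free $\bar\sigma^N-\bar\varrho$ whenever $\dot{\bar u}^N$ --- known only in the weak space $W^{1,N'}$ --- is paired with a full divergence, thereby avoiding any term that would require regularity of $\dot u^N$ not provided by Theorem \ref{TheoremNHoff}. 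Once the tangential step is closed, the $x_3$-step goes through essentially as a bootstrap.
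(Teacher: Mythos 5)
Your broad outline (difference quotients, testing against a cutoff times $D^h_\alpha\sigma$, exploiting the equations of motion and the safe-load decomposition $\bar\sigma-\bar\varrho$, and the correct observation that $D_3^h E u^N=-D^2 u_3^N$) matches the paper. However, there is a genuine gap in the key tangential step, and a secondary one concerning the level at which you linearize.

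\textbf{The monotonicity term cannot be simply discarded.} You propose to throw away the term $\int_0^t\langle D^h_\alpha D\phi_N(\sigma^N),\zeta^2 D^h_\alpha\sigma^N\rangle$ as nonnegative. But after the integration by parts and the substitution $E\dot u^N=\mathbb A_r\dot\sigma^N+D\phi_N(\sigma^N)$ inside the commutator terms (which is forced on you: $D^h_\alpha E\dot u^N$ cannot otherwise be controlled since $\dot u^N$ is known only in $BD$ and $W^{1,N'}$), you inevitably produce a cross term of the schematic form
\[
\int_0^t\!\!\int_\Om \nabla\zeta^2\,\bigl(\mathbb A_r\dot\sigma^N+D\phi_N(\sigma^N)\bigr)_{\alpha\gamma}\,D_\alpha(\sigma^N-\varrho)_{\beta\gamma}\,dx\,ds .
\]
The $D\phi_N(\sigma^N)$ factor lives only in $L^{N'}$, while $D_\alpha\sigma^N$ is exactly the quantity you are trying to bound in $L^2$; this pairing is not finite on the strength of the a~priori estimates of Theorems~\ref{TheoremNHoff}--\ref{TheoremNHoff2} alone. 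The paper's proof keeps the positive term, using the pointwise bound $D^2\psi_\la(\xi)\eta:\eta\ge\tfrac1{\al_0^{N-1}}(|\xi|_r^{N-2}\wedge\la^{N-2})|\eta|_r^2$, and then exploits the crucial algebraic cancellation
\[
|D\phi_N(\sigma)||D_\alpha\sigma|\lesssim\bigl(|\sigma|_r^N\bigr)^{1/2}\bigl(|\sigma|_r^{N-2}|D_\alpha\sigma|_r^2\bigr)^{1/2},
\]
so that, after Young's inequality, the second factor is absorbed into the retained coercive term and the first is controlled by the $L^N(L^N)$ bound \eqref{stimasigmalinftylN}. Without this absorption the Gronwall step does not close.

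\textbf{The choice of approximation level.} You run the argument directly on the Norton--Hoff solution $(\sigma^N,D\phi_N)$, but $D\phi_N$ is not globally Lipschitz, which is needed to justify that $D^h_\alpha\sigma^N(t)\in L^2_{\text{loc}}$ uniformly in $h$ before letting $h\to 0$. The paper avoids this by doing the entire tangential estimate at the $\la$-level, where $D\psi_\la$ is globally Lipschitz with constant $\la^{N-2}/\al_0^{N-1}$ (see \eqref{Dpsilambdalipschitz}): this first yields an $h$-uniform (but $\la$-dependent) $H^1_{\text{loc}}$ bound justifying the limit $h\to 0$, after which Gronwall gives the $(\la,N)$-uniform estimate, and finally the estimate passes to $\sigma^N$ by weak lower semicontinuity as $\la\to\infty$. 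Your proposal should be reworked to operate at the truncated level $\psi_\la$ for the tangential estimates and only then extract the bound for $\sigma^N$; the $x_3$ step can legitimately be done at the $N$-level, as in the paper, once the tangential bounds are available.
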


 Before proving the result let us introduce the difference quotient operator, that for $h\in\R$ is defined as
$$ D_\al^h v (x) \doteq \frac{v(x+h e_\al)-v(x)}{h}, \mbox{ for } \alpha\in \{1,2,3\}, $$
for a given function $v: \R^3 \to \R$. We recall some important properties of the difference quotient. If $f\in L^p(\Om;\R)$ and $g\in L^q(\Om;\R)$, $\psi\in C^0(\Om;\R)$ with either $f$ or $g$ with compact support in $\Om$, then for $h$ sufficiently small we have
\begin{align}
 &\langle D^h_\alpha f, g\rangle=-\langle f,D^{-h}_\al g\rangle,\nonumber\\
 &\langle D^{-h}_\alpha(fg),\psi\rangle=\langle D^{-h}_\al f T^{-h}_\alpha g,\psi\rangle+\langle fD^{-h}_\al g,\psi\rangle,\label{D2}
\end{align}
where $T^{-h}_\alpha g(x):=g(x-he_\al)$.

\begin{proof} \textit{Step 1.}
Let us first prove higher regularity for $\sigma^\la$, the approximating solution of $\sigma^N$ found in Lemma \ref{Lemmalambda}. Let $\varphi \in C_c^\infty (\om)$, let $\al\in\{1,2\}$, and $h\in (-1,1)$ be sufficiently small. We multiply the first equation in \eqref{NHLambdaproblem} by $D_\al^{-h}(\varphi^2 D_\al^h \sigma (t))$ obtaining
\begin{equation} \label{lambdakinemadmissibility}
\langle  D_\al^h \mathbb{A}_r \dot{\sigma}^\la (t), \varphi^2 D_\al^h \sigma^\la (t) \rangle + \langle  D_\al^h D \psi_\la (\sigma^\la (t)), \varphi^2 D_\al^h \sigma^\la (t)  \rangle=\langle  D_\al^h E \dot{u}_\la (t),   \varphi^2 D_\al^h \sigma^\la (t) \rangle.
\end{equation}
We integrate this equation in the time interval $[0,t]$. From \eqref{Acoercivity} it follows that
\begin{align}
&\int_0^t \langle  D_\al^h \mathbb{A}_r \dot{\sigma}^\la (s), \varphi^2 D_\al^h \sigma^\la (s) \rangle ds  = \frac{1}{2} \langle D_\al^h  \mathbb{A}_r  \sigma^\la (t), \varphi^2 D_\al^h  \sigma^\la (t) \rangle-\frac{1}{2} \langle D_\al^h  \mathbb{A}_r \sigma_0, \varphi^2 D_\al^h  \sigma_0 \rangle \notag \\ \label{15lug1}
& \geq \al_\mathbb{A} \Vert \varphi D_\al^h \sigma^\la (t) \Vert_{L^2}^2-\be_\mathbb{A} \Vert \varphi D_\al^h \sigma_0 \Vert_{L^2}^2.
\end{align}
The second term on the left-hand side of \eqref{lambdakinemadmissibility} can be rewritten as
\begin{align}\label{15lug2}
&\nonumber\int_0^t \langle  D_\al^h D \psi_\la (\sigma^\la (s)), \varphi^2 D_\al^h \sigma^\la (s)  \rangle ds \\
&= \int_0^t \int_0^1 \int_\Om \varphi^2(x')D^2 \psi_\la (\sigma^\la (s,x)+rh D_\al^h \sigma^\la (s,x))D_\al^h \sigma^\la (s,x):D_\al^h \sigma^\la (s,x) dx dr ds.
\end{align}
In particular it is positive, because
\begin{equation} \label{coercivityD^2lambda}
D^2 \psi_\la (\xi) \eta: \eta \geq \frac{1}{\al_0^{N-1}} ( \vert \xi \vert_r^{N-2} \wedge \la^{N-2})\vert \eta \vert_r^2,
\end{equation}
for all $\xi$, $\eta\in \Mtwo$.
Let us focus on the right-hand side of \eqref{lambdakinemadmissibility}. Since $u^\la(t) \in KL (\Om)$  we get 
\begin{align} \label{secondthirdterm}
\langle  D_\al^h E \dot{u}^\la (t),   \varphi^2 D_\al^h \sigma^\la (t) \rangle \nonumber=&\langle  D_\al^h E \dot{w} (t),   \varphi^2 D_\al^h \sigma^\la (t) \rangle+\langle  D_\al^h E ( \dot{\bar{u}}^\la (t)-\dot{\bar{w}}(t)),   \varphi^2 D_\al^h \bar{\sigma}^\la (t) \rangle \nonumber\\ 
&-\frac{1}{12} \langle  D_\al^h D^2 ( \dot{{u}}_3^\la (t)-\dot{w}_3 (t)),   \varphi^2 D_\al^h \hat{\sigma}^\la (t) \rangle. 
\end{align}
Integration by parts yields
\begin{align*}
&\langle  D_\al^h E ( \dot{\bar{u}}^\la (t)-\dot{\bar{w}}(t)),   \varphi^2 D_\al^h \bar{\sigma}^\la (t) \rangle=-\int_\om \partial_\be \varphi^2 (x') D_\al^h( \dot{\bar{u}}^\la -\dot{\bar{w}})_\gamma(t,x') D_\al^h \bar{\sigma}^\la_{\be \gamma} (t,x') dx' \nonumber\\
& +\int_\om \varphi^2 (x') D_\al^h( \dot{\bar{u}}^\la -\dot{\bar{w}})_\gamma(t,x') D_\al^h f_{\gamma} (t,x') dx'\nonumber\\
\end{align*}
where we implicitely assume sum over the indices $\beta$ and $\gamma$, whereas $\alpha$ is kept fixed. This last expression can be rewritten as
\begin{align}\label{comb1}
& =-\int_\om \int_0^1 \partial_\be \varphi^2 (x') \partial_\al ( \dot{\bar{u}}^\la -\dot{\bar{w}})_\gamma(t,x'+rhe_\al) D_\al^h (\bar{\sigma}_{\be \gamma}^\la-\bar\varrho_{\be \gamma}) (t,x') dr dx' \nonumber\\
& +\int_\omega\varphi^2 (x') D_\al^h( E\dot{\bar{u}}^\la -E\dot{\bar{w}})_{\be \gamma}(t,x') D_\al^h \bar\varrho_{\be \gamma} (t,x') dx'\nonumber \\
&=- 2 \int_\om \int_0^1 \partial_\be \varphi^2 (x') E ( \dot{\bar{u}}^\la -\dot{\bar{w}})_{\al \gamma}(t,x'+rhe_\al) D_\al^h (\bar{\sigma}_{\be \gamma}^\la-\bar\varrho_{\be \gamma}) (t,x') dr dx' \nonumber\\
&+ \int_\om \int_0^1 \partial_\be \varphi^2 (x') \partial_\gamma ( \dot{\bar{u}}^\la -\dot{\bar{w}})_\al (t,x'+rhe_\al) D_\al^h (\bar{\sigma}_{\be \gamma}^\la-\bar\varrho_{\be \gamma}) (t,x') dr dx' \nonumber\\
& +\int_\omega\varphi^2 (x') D_\al^h( E\dot{\bar{u}}^\la -E\dot{\bar{w}})_{\be \gamma}(t,x') D_\al^h \bar\varrho_{\be \gamma} (t,x') dx'\nonumber \\
&=- 2 \int_\om \int_0^1 \partial_\be \varphi^2 (x') E ( \dot{\bar{u}}^\la -\dot{\bar{w}})_{\al \gamma}(t,x'+rhe_\al) D_\al^h (\bar{\sigma}_{\be \gamma}^\la-\bar\varrho_{\be \gamma})(t,x') dr dx' \nonumber\\
&- \int_\om \int_0^1 \partial_{\be\gamma}^2 \varphi^2 (x')  ( \dot{\bar{u}}^\la -\dot{\bar{w}})_\al (t,x'+rhe_\al) D_\al^h (\bar{\sigma}_{\be \gamma}^\la-\bar\varrho_{\be \gamma}) (t,x') dr dx' \nonumber\\
& +\int_\omega\varphi^2 (x') D_\al^h( E\dot{\bar{u}}^\la -E\dot{\bar{w}})_{\be \gamma}(t,x') D_\al^h \bar\varrho_{\be \gamma} (t,x') dx'\nonumber \\
&=- 2 \int_\om \int_0^1 \partial_\be \varphi^2 (x') E ( \dot{\bar{u}}^\la -\dot{\bar{w}})_{\al \gamma}(t,x'+rhe_\al) D_\al^h (\bar{\sigma}_{\be \gamma}^\la-\bar\varrho_{\be \gamma})(t,x') dr dx' \nonumber\\
&+  \int_\om \int_0^1 D_\al^{-h} \partial_{\be\gamma}^2 \varphi^2 (x')  ( \dot{\bar{u}}^\la -\dot{\bar{w}})_\al (t,x'+rhe_\al)  (\bar{\sigma}_{\be \gamma}^\la-\bar\varrho_{\be \gamma})(t,x') dr dx' \nonumber\\
&+  \int_\om \int_0^1 \int_0^1 \partial_{\be\gamma}^2 \varphi^2 (x'-he_\alpha) \partial_\al ( \dot{\bar{u}}^\la -\dot{\bar{w}})_\al (t,x'+(r-q)he_\al)  (\bar{\sigma}_{\be \gamma}^\la-\bar\varrho_{\be \gamma}) (t,x') dq dr dx'\nonumber\\
& +\int_\omega\varphi^2 (x') D_\al^h( E\dot{\bar{u}}^\la -E\dot{\bar{w}})_{\be \gamma}(t,x') D_\al^h \bar\varrho_{\be \gamma} (t,x') dx'.
\end{align}
where in the second equality we have  used that $- \partial_\al u_\gamma=-2 (E u)_{\al \gamma} + \partial_\gamma u_\al,$ while in the last equality we have used \eqref{D2}.

As for the third term on the right-hand side of \eqref{secondthirdterm}, we write
\begin{align}\label{comb2}
&-\frac{1}{12} \langle  D_\al^h D^2 ( \dot{{u}}_3^\la (t)-\dot{w}_3 (t)),   \varphi^2 D_\al^h \hat{\sigma}^\la (t) \rangle= \nonumber \\
&=- \langle \varphi^2 D_\al^h ( \dot{u}^\la_3 (t)-\dot{w}_3(t)),D_\al^h ( \ddot{u}^\la_3 (t)-\ddot{w}_3(t)) \rangle - \langle \varphi^2 D_\al^h ( \dot{u}^\la_3 (t)-\dot{w}_3(t)),D_\al^h  \ddot{w}_3(t)\rangle \nonumber\\
& + \frac{1}{6} \int_\om \int_0^1 \partial_{\be} \varphi^2 (x') \partial_{\al\gamma}^2 ( \dot{u}_3^\la -\dot{w}_3)(t,x'+rhe_\alpha) D_\al^h   (\hat{\sigma}_{\be \gamma}^\la-\hat\varrho_{\be \gamma}) (t,x') dr dx' \nonumber\\
&-\frac{1}{12} \int_\om \int_0^1 D_\al^{-h} \partial_{\be \gamma}^2 \varphi^2 (x')  \partial_\al (\dot{u}^\lambda_3 -\dot{w}_3 )(t,x'+rh e_\al )(\hat{\sigma}_{\be \gamma}^\la-\hat\varrho_{\be \gamma})  (t,x')dr dx'  \nonumber\\
&-\frac{1}{12} \int_\om \int_0^1 \int_0^1 \partial_{\be \gamma}^2 \varphi^2 (x'-he_\al)  \partial^2_\al (\dot{u}_3^\lambda-\dot{w}_3) (t,x'+(r-q)h e_\al ))(\hat{\sigma}_{\be \gamma}^\la-\hat\varrho_{\be \gamma})  (t,x') dqdr dx'\nonumber\\
& -\frac{1}{12}\int_\om  D_\al^h \partial^2_{\be \gamma} ( \dot{{u}}_3^\la (t,x')-\dot{w}_3 (t,x'))  \varphi^2(x') D_\al^h \hat{\varrho}_{\be \gamma} (t,x')dx' .  
\end{align}
Combining \eqref{comb1} and \eqref{comb2} and using that  $u \in KL (\Om)$ we obtain
\begin{align}
&\langle  D_\al^h E ( \dot{\bar{u}}^\la (t)-\dot{\bar{w}}(t)),   \varphi^2 D_\al^h \bar{\sigma}^\la (t) \rangle-\frac{1}{12} \langle  D_\al^h D^2 ( \dot{{u}}_3^\la (t)-\dot{w}_3 (t)),   \varphi^2 D_\al^h \hat{\sigma}^\la (t) \rangle=\nonumber\\
&=- 2 \int_\Om \int_0^1 \partial_\be \varphi^2 (x') E ( \dot{{u}}^\la -\dot{{w}})_{\al \gamma}(t,x+rhe_\al) D_\al^h ({\sigma}^\la_{\be \gamma} -\varrho_{\be \gamma}) (t,x) dr dx \nonumber\\
&+  \int_\Om \int_0^1 D_\al^{-h} \partial_{\be\gamma}^2 \varphi^2 (x')  ( \dot{{u}}^\la -\dot{{w}})_\al (t,x+rhe_\al)  ({\sigma}^\la_{\be \gamma} -\varrho_{\be \gamma})(t,x) dr dx \nonumber\\
&+  \int_\Om \int_0^1 \int_0^1 \partial_{\be\gamma}^2 \varphi^2 (x'-he_\al) \partial_\al ( \dot{{u}}^\la -\dot{{w}})_\al (t,x+(r-q)he_\al)  ({\sigma}^\la_{\be \gamma} -\varrho_{\be \gamma}) (t,x) dq dr dx\nonumber\\
&- \langle \varphi^2 D_\al^h ( \dot{u}^\la_3 (t)-\dot{w}_3(t)),D_\al^h ( \ddot{u}^\la_3 (t)-\ddot{w}_3(t)) \rangle - \langle \varphi^2 D_\al^h ( \dot{u}^\la_3 (t)-\dot{w}_3(t),D_\al^h  \ddot{w}_3(t)) \rangle. \nonumber\\
& +\int_\Omega\varphi^2 (x') D_\al^h( E\dot{{u}}^\la -E\dot{{w}})_{\be \gamma}(t,x') D_\al^h \varrho_{\be \gamma} (t,x') dx'.
\end{align}
Using this, \eqref{lambdakinemadmissibility},  \eqref{15lug1}, \eqref{15lug2}, \eqref{secondthirdterm}, the computations carried on so far  lead to
\begin{align} \label{primastimaregol}
&\al_\mathbb{A} \Vert \varphi D_\al^h \sigma^\lambda(t) \Vert_{L^2}^2-\be_\mathbb{A} \Vert \varphi D_\al^h \sigma_0 \Vert_{L^2}^2 \nonumber\\
 &+\int_0^t \int_0^1  \varphi^2(x')D^2 \psi_\la (\sigma^\la (s)+rh D_\al^h \sigma^\la (s))D_\al^h \sigma^\la (s):D_\al^h \sigma^\la (s) dr ds \nonumber\\
 &+\frac{1}{2} \Vert \varphi D_\al^h \dot{u}^\lambda_3 (t)- \varphi D_\al^h \dot{w}_3(t)) \Vert_{L^2}^2- \frac{1}{2} \Vert \varphi D_\al^h (v_0)_3 (t)-\varphi D_\al^h \dot{w}_3(0) \Vert_{L^2}^2\nonumber \\
 &\leq \int_0^t \Big( \langle D_\al^h E \dot{w}(s), \varphi^2 D_\al^h \sigma^\la (s) \rangle- \langle \varphi^2 D_\al^h \ddot{w}_3 (s), D_\al^h ( \dot{u}^\lambda_3 (s)- \dot{w}_3 (s) ) \rangle \Big) ds\nonumber \\
&- 2 \int_0^t \int_\Om \int_0^1 \partial_\be \varphi^2 (x')  ( \mathbb{A}_r \dot{\sigma}^\la +D \psi_\la (\sigma^\la))_{\al \gamma}(s,x+rhe_\al) D_\al^h ({\sigma}_{\be \gamma}^\la- {\varrho}_{\be \gamma}) (s,x) dr dx ds \nonumber\\
&+ \int_0^t \int_\Om \int_0^1 D_\al^{-h} \partial_{\be\gamma}^2 \varphi^2 (x')  ( \dot{{u}}^\la -\dot{{w}})_\al (s,x+rhe_\al)  ({\sigma}_{\be \gamma}^\la- {\varrho}_{\be \gamma}) (s,x) dr dx ds \nonumber\\
&+ \int_0^t \int_\Om \int_0^1 \int_0^1 \partial_{\be\gamma}^2 \varphi^2 (x'-he_\al) ( \mathbb{A}_r \dot{\sigma}^\la +D \psi_\la (\sigma^\la))_{\al \al} (s,x+(r-q)he_\al)  ({\sigma}_{\be \gamma}^\la- {\varrho}_{\be \gamma}) (t,x) dq dr dx ds\nonumber \\
&+ 2 \int_0^t \int_\Om \int_0^1 \partial_\be \varphi^2 (x')  ( E \dot{w})_{\al \gamma}(s,x+rhe_\al) D_\al^h ({\sigma}_{\be \gamma}^\la- {\varrho}_{\be \gamma}) (s,x) dr dx ds \nonumber\\  
&- \int_0^t \int_\Om \int_0^1 \int_0^1 \partial_{\be\gamma}^2 \varphi^2 (x'-he_\al) \partial_\al \dot{w}_\al  (s,x+(r-q)he_\al)  ({\sigma}_{\be \gamma}^\la- {\varrho}_{\be \gamma}) (s,x) dq dr dx ds\nonumber\\
&- \int_0^t \int_\Om \int_0^1\partial_\alpha \varphi^2(x')( \mathbb{A}_r \dot{\sigma}^\la +D \psi_\la (\sigma^\la)-E\dot w)_{\beta \gamma}(s,x+rhe_\al)D^h_\al\varrho_{\be\gamma}(s,x)drdxds\nonumber\\
&- \int_0^t \int_\Om \int_0^1\varphi^2(x')( \mathbb{A}_r \dot{\sigma}^\la +D \psi_\la (\sigma^\la)-E\dot w)_{\beta \gamma}(s,x+rhe_\al)D^h_\al\partial_\alpha \varrho_{\be\gamma}(s,x)drdxds,
\end{align}
for all $t\in[0,T]$.
From \eqref{Dpsilambdalipschitz} it follows that
\begin{align*}
&\al_\mathbb{A} \Vert \varphi D_\al^h \sigma^\lambda(t) \Vert_{L^2}^2-\be_\mathbb{A} \Vert \varphi D_\al^h \sigma_0 \Vert_{L^2}^2 +\frac{1}{2} \Vert \varphi D_\al^h \dot{u}^\lambda_3 (t)- \varphi D_\al^h\dot{w}_3(t)) \Vert_{L^2}^2 \\
&- \frac{1}{2} \Vert \varphi D_\al^h (v_0)_3 (t)-\varphi D_\al^h \dot{w}_3(0)) \Vert_{L^2}^2 \\
& \leq C \Big( \int_0^t (\Vert \varphi D_\al^h \sigma^\la (s) \Vert_{L^2}+\Vert  \sigma^\la (s) \Vert_{L^2})(\Vert  \dot{\sigma}^\la (s) \Vert_{L^2} +\frac{\la^{N-2}}{\al_0^{N-1}} \Vert  \sigma^\la (s) \Vert_{L^2} ) ds \\
& +  \int_0^t (\Vert \varphi D_\al^h \varrho (s) \Vert_{L^2}+\Vert  \varrho (s) \Vert_{L^2})(\Vert  \dot{\sigma}^\la (s) \Vert_{L^2} +\frac{\la^{N-2}}{\al_0^{N-1}} \Vert  \sigma^\la (s) \Vert_{L^2} ) ds \\
&+\int_0^t(\|\varphi D_\al^h\sigma^\la(s)\|_{L^2}+\|\varphi D_\al^h\varrho(s)\|_{L^2})\|E\dot w(s)\|_{L^2}+\|D_\al^hE\dot w(s)\|_{L^2}\|\varphi D^h_\al \sigma^\la(s)\|_{L^2}ds\nonumber\\
&+\int_0^t ( \Vert \dot{u}^\la (s) \Vert_{L^2}+\Vert  E \dot{w} (s) \Vert_{L^2}+ \Vert  \dot{w} (s) \Vert_{L^2})(\Vert  \sigma^\la (s) \Vert_{L^2}+ \Vert  \varrho (s) \Vert_{L^2}) ds \\
& + \int_0^t \Vert \varphi D_\al^h(\dot{u}^\lambda_3 (s)- \dot{w}_3 (s)) \Vert_{L^2} \Vert D_\al^h \ddot{w}_3 (s) \Vert_{L^2}ds\\
& +  \int_0^t (\Vert \varphi D_\al^h \partial_\alpha\varrho (s) \Vert_{L^2}+\|\varphi D^h_\al \varrho^\la(s)\|_{L^2})(\Vert  \dot{\sigma}^\la (s) \Vert_{L^2} +\frac{\la^{N-2}}{\al_0^{N-1}} \Vert  \sigma^\la (s) \Vert_{L^2} +\|E\dot w(s)\|_{L^2}) ds \Big).
\end{align*}
Thanks to \eqref{firstlambdaestimate} and \eqref{secondlambdaestimate} we have $(\sigma^\la, \dot{u}^\la) \in H^1([0,T]; L^2(\Om;\Mtwo)) \times L^2([0,T]; L^2(\Om; \R^3))$. Using the regularity of the external data \eqref{regularityw} and \eqref{regularityvarrho}, the previous inequality implies that $ D_\al^h \sigma^\la (t) \in L^2_\mathrm{loc}( \om \times [ -\frac{1}{2},\frac{1}{2} ]; \Mtwo) $ for all $t \in [0,T]$, while we already know that  $D_\al^h u_3^\la (t) \in L^2_\mathrm{loc}( \om \times [ -\frac{1}{2},\frac{1}{2} ]; \Mtwo) $ for all $t \in [0,T]$ because $u(t) \in KL (\Om)$ and $BH(\om)$ embeds into $H^1(\om)$. Notice that these estimates are uniform in $t\in[0,T]$. 
Therefore we can pass to the limit in \eqref{primastimaregol} as $h$ tends to $0$, and thanks to \eqref{coercivityD^2lambda} we get
\begin{align} \label{secondastimaregol}
&\al_\mathbb{A} \Vert \varphi D_\al \sigma^\lambda(t) \Vert_{L^2}^2-\be_\mathbb{A} \Vert \varphi D_\al \sigma_0 \Vert_{L^2}^2+\frac{1}{\al_0^{N-1}}\int_0^t \int_\Om  \varphi^2( \vert \sigma^\la (s) \vert_r^{N-2} \wedge \la^{N-2} ) \vert D_\al \sigma^\la (s) \vert_r^2  dx ds \nonumber\\
 &+\frac{1}{2} \Vert \varphi D_\al  \dot{u}^\lambda_3 (t)- \varphi D_\al \dot{w}_3(t)) \Vert_{L^2}^2- \frac{1}{2} \Vert \varphi D_\al (v_0)_3- \varphi D_\al \dot{w}_3(0) \Vert_{L^2}^2 \nonumber\\
 &\leq \int_0^t \Big( \langle D_\al E \dot{w}(s), \varphi^2 D_\al \sigma^\la (s) \rangle- \langle \varphi^2 D_\al \ddot{w}_3 (s), D_\al (\dot{u}_3^\lambda (s)- \dot{w}_3 (s) ) \rangle \Big) ds \nonumber\\
&- 2 \int_0^t \int_\Om  \partial_\be \varphi^2 (x')  ( \mathbb{A}_r \dot{\sigma}^\la +D \psi_\la (\sigma^\la))_{\al \gamma}(s,x) D_\al ({\sigma}_{ \be \gamma}^\la-\varrho_{\be\gamma}) (s,x)  dx ds \nonumber\\
&+ \int_0^t \int_\Om   \partial^3_{\al \be\gamma} \varphi^2 (x')  ( \dot{{u}}^\la -\dot{{w}})_\al (s,x)  ({\sigma}_{ \be \gamma}^\la-\varrho_{\be\gamma}) (s,x) dx ds \nonumber\\
&+ \int_0^t \int_\Om  \partial^2_{\be\gamma} \varphi^2 (x') ( \mathbb{A}_r \dot{\sigma}^\la +D \psi_\la (\sigma^\la))_{\al \al} (s,x)  ({\sigma}_{ \be \gamma}^\la-\varrho_{\be\gamma}) (s,x)  dx ds \nonumber\\
&+ 2 \int_0^t \int_\Om  \partial_\be \varphi^2 (x')  ( E \dot{w})_{\al \gamma}(s,x) D_\al ({\sigma}_{ \be \gamma}^\la-\varrho_{\be\gamma}) (s,x)  dx ds \nonumber\\  
&- \int_0^t \int_\Om  \partial^2_{\be\gamma} \varphi^2 (x') \partial_\al \dot{w}_\al  (s,x)  ({\sigma}_{ \be \gamma}^\la-\varrho_{\be\gamma}) (s,x)  dx ds\nonumber\\
&- \int_0^t \int_\Om\partial_\alpha \varphi^2(x')( \mathbb{A}_r \dot{\sigma}^\la +D \psi_\la (\sigma^\la)-E\dot w)_{\beta \gamma}(s,x)D_\al\varrho_{\be\gamma}(s,x)dxds\nonumber\\
&- \int_0^t \int_\Om \varphi^2(x')( \mathbb{A}_r \dot{\sigma}^\la +D \psi_\la (\sigma^\la)-E\dot w)_{\beta \gamma}(s,x)D_\al\partial_\alpha \varrho_{\be\gamma}(s,x)dxds.
\end{align} 
The first, sixth, and seventh term in the right-hand side of \eqref{secondastimaregol} can be estimated using Cauchy-Schwartz and H\"older inequality. Similarly the second term is estimated by 
\begin{align}
 \frac{1}{2}\|\varphi D_\al\ddot w_3\|^2_{L^2(L^2)}+\frac{1}{2}\int_0^t\Vert \varphi D_\al  \dot{u}^\lambda_3 (s)- \varphi D_\al \dot{w}_3(s)) \Vert_{L^2}^2ds.
\end{align}
Now we consider the third term on the right-hand side of \eqref{secondastimaregol}.
Using the expression of $D \psi_\lambda$, we get
\begin{align*}
&\int_0^t \int_\Om \vert \partial_\be \varphi^2 (x')  ( \mathbb{A}_r \dot{\sigma}^\la +D \psi_\la (\sigma^\la))_{\al \gamma}(s,x) D_\al {\sigma}_{ \be \gamma}^\la (s,x) \vert dx ds \\
& \leq C \int_0^t \Vert \varphi D_\al \sigma^\la (s) \Vert_{L^2} \Vert \dot{\sigma}^\la (s) \Vert_{L^2}ds \\
&+ \frac{C}{\al_0^{N-1}} \int_0^t \int_\Om   ( \vert \sigma^\la (s,x) \vert_r^{N-2} \wedge \la^{N-2} ) \vert  \sigma^\la (s,x) \vert_r \vert \varphi(x') D_\al \sigma^\la (s,x) \vert dx ds \\
& \leq C \left( \int_0^t \Vert \varphi D_\al \sigma^\la (s) \Vert_{L^2}^{2} ds \right)^{1/2}+\frac{C}{\al_0^{\frac{N-1}{2}}}\left( \int_0^t \int_\Om | \vert \sigma^\la (s) \vert_r^{N-2} \wedge \la^{N-2} |\;|\varphi D_\al \sigma^\la (s)|^2 dx ds \right)^{1/2}\\
&\leq  C \int_0^t \Vert \varphi D_\al \sigma^\la (s) \Vert_{L^2}^{2} ds+C+\frac{1}{2\al_0^{N-1}}\int_0^t \int_\Om | \vert \sigma^\la (s) \vert_r^{N-2} \wedge \la^{N-2} |\; |\varphi D_\al \sigma^\la (s)|_r^2 dx ds,
\end{align*}
where we have used \eqref{firstlambdaestimate}, \eqref{secondlambdaestimate}, and \eqref{stimatroncata}. In such a way the last term is absorbed by the corresponding one in the left-hand side of \eqref{secondastimaregol}. The part of the third term in the right-hand side of \eqref{secondastimaregol} containing $D_\alpha\varrho$ is treated together with the last two terms.

The estimate for the fifth term is analogous and is based on the boundedness \eqref{stimatroncata}. Let us treat the fourth term. To this aim we recall that $\dot{u}^\la$ is uniformly bounded with respect to $\la$ and $N$ in $L^1([0,T];L^2(\Om))$ by \eqref{sanvalentino}.

Therefore
\begin{align*}
& \int_0^t \int_\Om   \partial^3_{\al \be\gamma} \varphi^2 (x')  ( \dot{{u}}^\la -\dot{{w}})_\al (s,x)  ({\sigma}_{\be \gamma}^\la (s,x)-{\varrho}_{\be \gamma} (s,x)) dx ds\\
&\leq C \| \dot{{u}}^\la -\dot{{w}}\|_{L^1(L^2)} \Vert {\sigma}^\la-\varrho  \Vert_{L^\infty(L^2)}\leq C.
\end{align*}
where $C$ is independent of $\la$ and $N$. We finally study the last two terms in \eqref{secondastimaregol}. Since $D\psi_\la(\sigma_\lambda)$ is uniformly bounded in $L^1([0,T];L^1(\Om;\Mtwo))$, it suffices to observe that $\varrho$ belongs to $L^\infty([0,T];W^{2,\infty}_\text{loc}(\om\times [-\frac12,\frac12];\Mtwo))$.

Combining all these estimates and the hypotheses we conclude
\begin{align*}
&\al_\mathbb{A} \Vert \varphi D_\al \sigma^\lambda(t) \Vert_{L^2}^2-\be_\mathbb{A} \Vert \varphi D_\al \sigma_0 \Vert_{L^2}^2+\frac{1}{2\al_0^{N-1}}\int_0^t \int_\Om  \varphi^2( \vert \sigma^\la (s) \vert_r^{N-2} \wedge \la^{N-2} ) \vert D_\al \sigma^\la (s) \vert_r^2  dx ds \nonumber\\
 &+\frac{1}{2} \Vert \varphi D_\al  \dot{u}^\lambda_3 (t)- \varphi D_\al \dot{w}_3(t)) \Vert_{L^2}^2- \frac{1}{2} \Vert \varphi D_\al (v_0)_3- \varphi D_\al \dot{w}_3(0) \Vert_{L^2}^2 \nonumber\\ 
 &\leq C\Big(1+\int_0^t \Vert \varphi D_\al \sigma^\la (s) \Vert_{L^2}^{2} ds+\int_0^t r(s)\big(\Vert \varphi D_\al \sigma^\la (s) \Vert_{L^2}+\Vert \varphi D_\al  \dot{u}^\lambda_3 (s)- \varphi D_\al \dot{w}_3(s)) \Vert_{L^2}\big)ds\Big),
\end{align*}
where the constant $C$, as well as $\|r\|_{L^1([0,T])}$, are independent of $\la$ and $N$. Therefore, by applying the Gronwall Lemma we infer that there is a constant $C>0$ independent of $\la$ and $N$ such that 
\begin{align}\label{estimateH1}
  \Vert \varphi D_\al \sigma^\lambda(t) \Vert_{L^2}^2+\| \varphi D_\al  \dot{u}^\lambda_3 (t)\|_{L^2}^2\leq C,
\end{align}
for all $t\in[0,T]$. Hence, by arbitrariness of $\varphi$, we conclude that 
for every open set $\om'$ compactly contained in $\om$ the sequences $(D_\al \sigma^\la)_\la$ and $ (D_\al  \dot{u}^\lambda_3)_\la$ are both uniformly bounded in $ L^\infty ([0,T]; L^2( \om' \times [-\frac{1}{2},\frac{1}{2}]))$.  Owing to \eqref{convsigmalasigmaN} and \eqref{convu_3infinito},
$D_\al \sigma^N$ and $D_\al \dot u_3^N$ belong to  $ L^\infty ([0,T]; L^2( \om' \times [-\frac{1}{2},\frac{1}{2}]))$, for every open set $\om' $ compactly contained in $\om$. 

\medbreak
\noindent\textit{Step 2.} 
By \eqref{convsigmalasigmaN} and \eqref{convu_3infinito} we know that 
\begin{align}
 \varphi\sigma^\la(t)\rightharpoonup \varphi\sigma^N(t)\text{ weakly in }L^2(\Om),\\
 \varphi \dot u_3^\la(t)\rightharpoonup \varphi\dot u_3^N(t)\text{ weakly in }L^2(\om),
\end{align}
for all $t\in[0,T]$. Estimate \eqref{estimateH1} entails that the previous convergences take place in the $H^1(\Om)$ and $H^1(\om)$ weak topologies, respectively. Therefore by lower-semicontinuity we conclude 
\begin{align}
  \Vert \varphi D_\al \sigma^N(t) \Vert_{L^2}^2+\| \varphi D_\al \dot{u}^N_3 (t)\|_{L^2}^2\leq C,
\end{align}
for all $t\in[0,T]$, with the constant $C>0$ independent of $N$. We have proved that $(D_\al \sigma^N)_N$  and $(D_\al \dot{u}^N_3)_N$ are uniformly bounded in $ L^\infty ([0,T]; L^2( \om' \times [-\frac{1}{2},\frac{1}{2}]))$ with respect to $N$.

\medbreak
\noindent\textit{Step 3.} Now we prove higher regularity with respect to the third coordinate $x_3$, namely \eqref{regestimatesigma3N}. Given $\varphi \in C_c^\infty (\Om)$, we test the first equation in \eqref{NHproblem} by $D_3^{-h} (\varphi^2 D_3^h \sigma^N (t))$. We get
\begin{equation*}
\langle  D_3^h \mathbb{A}_r \dot{\sigma}^N (t), \varphi^2 D_3^h \sigma^N (t) \rangle + \langle  D_3^h D \phi_N (\sigma^N (t)), \varphi^2 D_3^h \sigma^N (t)  \rangle=-\langle  D^2 \dot{u}_3^N (t),   \varphi^2 D_3^h \sigma^N (t) \rangle.
\end{equation*}
 The right-hand side can be integrated by parts as
\begin{align}
 &-\langle  D^2 \dot{u}_3^N,   \varphi^2 D_3^h \sigma^N \rangle=-\langle  \partial^2_{\beta\gamma} \dot{u}_3^N,  \frac{1}{12} \varphi^2  (\hat\sigma^N -\hat\varrho)_{\beta\gamma}\rangle-\langle  \partial^2_{\beta\gamma} \dot{u}_3^N,  \frac{1}{12} \varphi^2  \hat\varrho_{\beta\gamma}\rangle\nonumber\\
 &=-\langle\varphi^2(\dot u^N_3-\dot w_3),\ddot u_3^N-\ddot w_3\rangle-\langle\varphi^2\ddot w_3,(\dot u^N_3-\dot w_3)\rangle-\langle D^2\dot w_3,\varphi^2\frac{1}{12}(\hat \sigma^N-\hat\varrho)\rangle\nonumber\\
 &-\langle D^2\dot u_3^N,\varphi^2\frac{1}{12}\hat\varrho\rangle-\langle \dot u^N_3-\dot w_3,\partial^2_{\beta\gamma}\varphi^2\frac{1}{12}(\hat \sigma^N-\hat\varrho)_{\beta\gamma}\rangle\nonumber\\
 &-\langle \dot u^N_3-\dot w_3,\partial_{\beta}\varphi^2\frac{1}{6}\partial_\gamma(\hat \sigma^N-\hat\varrho)_{\beta\gamma}\rangle
\end{align}

Integrating this equation with respect to time, using \eqref{Acoercivity}, \eqref{4.48}, and the regularity of $w$ and $\varrho$, we infer
\begin{align} \label{quartastimaregol}
&\al_\mathbb{A} \Vert \varphi D_3^h \sigma^N(t) \Vert_{L^2}^2-\be_\mathbb{A} \Vert \varphi D_3^h \sigma_0 \Vert_{L^2}^2\nonumber \\
&+\int_0^t \int_0^1 \varphi^2 (x') D^2 \phi_N (\sigma^N (s,x)+rh D_3^h \sigma^N (s,x))D_3^h \sigma^N (s,x):D_3^h \sigma^N (s,x) dr dx ds\nonumber\\
 &+\frac{1}{2}\|\varphi(\dot u_3(t)-\dot w_3(t))\|^2_{L^2}-\frac{1}{2}\|\varphi(\dot u_3)_0-\dot w_3(0))\|^2_{L^2}\nonumber\\
&\leq C+C\int_0^t\|\sigma^N(s)\|_{L^2}^2+\|\varphi D\sigma^N(s)\|_{L^2}^2ds.
 \end{align}
 Using the estimate obtained in Step 2, the Gronwall Lemma implies that the right hand-side of \eqref{quartastimaregol} is uniformly bounded with respect to $N$. Hence we have
\begin{align*}
&\al_\mathbb{A} \Vert \varphi D_3^h \sigma^N(t) \Vert_{L^2}^2 \leq \be_\mathbb{A} \Vert \varphi D_3^h \sigma_0 \Vert_{L^2}^2 +C,
\end{align*}
with $C$ independent of $h$ and $N$. Passing to the limit as $h$ tends to $0$ and using the assumptions on $\sigma_0$ we get
\begin{align*}
&\al_\mathbb{A} \Vert \varphi D_3 \sigma^N(t) \Vert_{L^2}^2 \leq C,
\end{align*}
with $C$ independent of $N$. This concludes the proof.
\end{proof}

\section{Existence and regularity result}
We are now in a position to prove the main result of the paper, which states the existence of a solution of the dynamic evolution together with higher spatial regularity for the stress field $\sigma$ and time regularity for the velocity $\dot u_3$. 
\begin{theorem} \label{mainresult}
Assume that the hypotheses of Lemma \ref{Lemmalambda} are satisfied and that $$\sigma_0 \in H^1(\Om; \Mtwo).$$ Then there exists a triplet
$$ (u,e,p) \in H^1([0,T]; KL (\Om) \times L^2 (\Om; \Mtwo) \times M_b (\Om \cup \Gd; \Mtwo)), $$
with $$u_3\in H^2([0,T];L^2(\om)),$$
 satisfying the initial conditions $(u(0),e(0),p(0))=(u_0, e_0,p_0)$ and $\dot{u}_3(0)=(v_0)_3$ and the following:
\smallskip

\begin{itemize}

\item[(i)] {\em kinematic admissibility:} $(u(t),e(t),p(t)) \in \mathcal{A}_{KL}(w(t))$ for every $t \in [0,T]$;
\medskip

\item[(ii)] {\em constitutive law:} $\sigma(t):=\mathbb C_r e(t)$;
\medskip

\item[(iii)] {\em equations of motion:} for every $t\in [0,T]$
\begin{equation} \label{limit-equilibrium}
-\Div\,\bar{\sigma}(t)= f(t) \quad\mbox{ a.e. in }  \omega, 
\end{equation}
and for a.e.\ $t \in [0,T]$
\begin{equation} \label{tangentialeqmotion}
\begin{cases}
\ddot{u}_3(t)-\frac{1}{12} \Div\,\Div\,\hat{\sigma}(t)= g(t) \mbox{ a.e. in }  \omega, \\
\sigma(t) \in \Theta (\gn, 0,0 );\\
\end{cases}
\end{equation}
\medskip

\item[(iv)] {\em stress constraint:} $\sigma(t) \in \mathcal{K}_r (\Om)$ for every $t\in [0,T]$;
\medskip
\item[(v)] {\em flow rule:} for a.e.\ $t\in [0,T]$
\begin{equation}\label{flow}
\HH_r(\dot{p}(t))= \left\langle \sigma(t),\dot{p}(t)\right\rangle_r.
\end{equation}

\end{itemize}
Moreover $\sigma (t)$ and $u_3 (t)$ are unique, and the following estimates hold true:
\begin{itemize}
\item for every open set $\om'$ compactly contained in $\om$ there exists a positive constant $C_1=C_1(\om')$   such that
\begin{equation} \label{regestimatesigmaal}
\sup_{t \in [0,T]} \Vert D_\alpha \sigma (t) \Vert_{L^2 ( \om ' \times \left( -\frac{1}{2},\frac{1}{2} \right))} \leq C_1,
\end{equation}   
\item for every open set $\Om'$ compactly contained in $\Om$ there exists a positive constant $C_2=C_2(\Om')$   such that
\begin{equation} \label{regestimatesigma3}
\sup_{t \in [0,T]} \Vert D_3 \sigma (t) \Vert_{L^2 ( \Om ' )} \leq C_2.
\end{equation}
\item for every open set $\om'$ compactly contained in $\om$ there exists a positive constant $C_3=C_3(\om')$ such that
\begin{equation} \label{reg_Du3finale}
\sup_{t \in [0,T]} \Vert D_\alpha \dot u_3(t) \Vert_{L^2 ( \om ')} \leq C_3.
\end{equation}   
\end{itemize} 
Finally, if the external force $f$ is independent of time $t$  also the following additional regularity conditions are satisfied
\begin{align}\label{higherregularity}
 &\sigma \in W^{1,\infty}([0,T];L^2(\Om; \Mtwo)),\;\; u\in W^{1,\infty}([0,T],BD(\Om)),\nonumber\\
 &u_3\in W^{2,\infty}([0,T];L^2(\om)),
\end{align}
and in particular
\begin{align*}
 (u,e,p) \in Lip([0,T]; KL (\Om) \times L^2 (\Om; \Mtwo) \times M_b (\Om \cup \Gd; \Mtwo)).
\end{align*}

\end{theorem}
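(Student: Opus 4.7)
The plan is to pass to the limit $N \to \infty$ in the Norton--Hoff approximations $(u^N, \sigma^N)$ provided by Theorem \ref{TheoremNHoff}. Setting $e^N := \mathbb{A}_r \sigma^N$ and $p^N := Eu^N - e^N$, the uniform bounds \eqref{stimasigmaW1infty}--\eqref{stimauN} yield (up to a subsequence) weak convergences $\sigma^N \wto \sigma$ in $H^1([0,T]; L^2(\Om; \Mtwo))$, $u_3^N \wto u_3$ in $H^2([0,T]; L^2(\om))$, $u^N$ weakly-$*$ in $H^1([0,T]; BD(\Om))$, and consequently $p^N \wto p$ weakly-$*$ in $H^1([0,T]; M_b(\Om\cup\Gd;\Mtwo))$. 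Kinematic admissibility (i) and the equations of motion (iii) pass to the limit, using continuity of the trace operator and the fact that $\Theta(\gn,0,0)$ is closed under the relevant weak convergence. For the stress constraint (iv), I rely on \eqref{stimasigmalinftylN}: H\"older's inequality gives, for any fixed $M$, a uniform bound $\|\sigma^N\|_{L^M} \leq C^{1/N}\al_0^{(N-1)/N}|Q|^{1/M-1/N}$, which in the limit $N\to\infty$ yields $|\sigma(t,x)|_r \leq \al_0$ a.e.; continuity in time of $\sigma$ in $L^2$ then upgrades this to every $t \in [0,T]$.

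The principal obstacle is the flow rule (v), which requires combining an energy inequality with the stress--strain duality of Section \ref{preliminaries}. I plan to test the kinematic compatibility $\mathbb{A}_r \dot\sigma^N + D\phi_N(\sigma^N) = E\dot u^N$ against $\sigma^N - \varrho$, invoking Corollary \ref{formulazzaintparti} together with the safe-load condition \eqref{regularityvarrho*} to bound boundary and load terms. The crucial pointwise inequality $\phi_N(\sigma^N) + \phi_N^*(D\phi_N(\sigma^N)) = D\phi_N(\sigma^N):\sigma^N$ combined with the estimate $H_r(D\phi_N(\sigma^N)) \leq \al_0 |D\phi_N(\sigma^N)|_*$ and the fact that $(\phi_N)_N$ concentrates on $K_r$ as $N\to\infty$, lets one pass to the limit and obtain $\langle\sigma(t),\dot p(t)\rangle_r \geq \int H_r(\dot p(t)/|\dot p(t)|)d|\dot p(t)|$. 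The converse inequality $\langle \sigma(t),\dot p(t)\rangle_r \leq \int H_r(d\dot p(t)/d|\dot p(t)|)d|\dot p(t)|$ is immediate from $\sigma(t)\in K_r=\partial H_r(0)$ and the pointwise bound $\sigma:q \leq H_r(q)$, applied via the convex-function-of-measures machinery. This yields \eqref{flow} for a.e.\ $t$.

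Uniqueness of $\sigma$ and $u_3$ proceeds exactly as in Step 5 of Lemma \ref{Lemmalambda}: for two solutions $(\sigma_i,u_i)$ with identical data, one tests the difference of the kinematic compatibilities against $\sigma_1-\sigma_2$, uses monotonicity of the normal cone $\partial I_{K_r}$ at $\sigma_i(t)$ deriving from the flow rule, and exploits coercivity \eqref{Acoercivity} of $\mathbb{A}_r$ to conclude $\sigma_1\equiv\sigma_2$ and $\dot u_3^{(1)} \equiv \dot u_3^{(2)}$. The regularity estimates \eqref{regestimatesigmaal}--\eqref{reg_Du3finale} transfer from the corresponding $N$-uniform bounds \eqref{regestimatesigmaalN}--\eqref{reg_Du3} by lower semicontinuity of the $L^2$ norm under weak convergence. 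Finally, when $f$ is independent of $t$, the enhanced estimates of Theorem \ref{TheoremNHoff2} are $N$-uniform in $W^{1,\infty}$ and $W^{2,\infty}$ respectively, and passage to the weak-$*$ limit in the corresponding dual spaces yields \eqref{higherregularity}, whence the Lipschitz regularity of the triple $(u,e,p)$.
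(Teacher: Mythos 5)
Your overall scaffolding matches the paper's: pass to the limit $N\to\infty$ in the Norton--Hoff approximations, deduce (i)--(iv) from the estimates of Theorem \ref{TheoremNHoff}, establish (v), prove uniqueness by a monotonicity argument, and transfer the $N$-uniform spatial and temporal estimates by weak lower semicontinuity. Where you diverge is in the flow rule (v). The paper tests the Norton--Hoff identity against $\varphi(\theta-\sigma^N(t))$ for \emph{arbitrary} $\theta\in\mathcal K_r(\Om)\cap\Sigma(\Om)$ and nonnegative $\varphi\in C^2(\bar\om)$, uses convexity of $\phi_N$ to derive a variational inequality, obtains strong $L^2$ convergence of $\sigma^N$ and $\dot u_3^N$ by choosing $\theta=\sigma$, $\varphi\equiv1$, then passes to the limit with cutoffs $\varphi_\delta$ vanishing near $\gn$ and closes via \cite[Prop.~7.8]{DavMor} and arbitrariness of $\theta$. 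You instead sketch a Fenchel/energy-balance argument: combine the identity $\phi_N(\sigma^N)+\phi_N^*(D\phi_N(\sigma^N))=D\phi_N(\sigma^N):\sigma^N$ with $H_r=\al_0|\cdot|_*$, pass $\phi_N^*\to H_r$ in the limit to get $\int_0^T\langle\sigma,\dot p\rangle_r\,dt\geq\int_0^T\mathcal H_r(\dot p)\,dt$, and close with the converse duality inequality. Both routes are standard in perfect plasticity and both lean on the DavMor duality results; the variational-inequality route bypasses the need to control $\phi_N^*(D\phi_N(\sigma^N))$ in $L^1$ directly, while the energy route makes the limiting energy balance explicit.

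Two gaps in the sketch should be flagged. First, you say you ``test the kinematic compatibility against $\sigma^N-\varrho$'' and then invoke the Fenchel equality; but testing against $\sigma^N-\varrho$ yields (as in Step 3 of the proof of Theorem \ref{TheoremNHoff}) the $L^1$ bound on $D\phi_N(\sigma^N)$, whereas the Fenchel/energy balance comes from testing against $\sigma^N$ itself — these are two separate manipulations and you need both, plus the strong $L^2$ convergence of $\sigma^N(T)$ and $\dot u_3^N(T)$ (which the paper establishes by the $\theta=\sigma$ test) to pass the kinetic and elastic energy terms to the limit without loss. Second, calling the converse inequality $\langle\sigma(t),\dot p(t)\rangle_r\leq\mathcal H_r(\dot p(t))$ ``immediate'' understates the issue: because $\dot p(t)$ is a measure and $\langle\cdot,\cdot\rangle_r$ is defined indirectly through the moment decomposition and the integration-by-parts formula, this inequality is itself a nontrivial theorem for $\sigma\in\mathcal K_r(\Om)\cap\Sigma(\Om)$ and $p\in\Pi_{\Gd}(\Om)$; it must be quoted precisely from \cite{DavMor}, not derived by naive pointwise estimation. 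Finally, note that your uniqueness paragraph cannot proceed ``exactly as in Step 5 of Lemma \ref{Lemmalambda}'': in the limit the kinematic compatibility involves the measure $\dot p(t)$, so one has to subtract the equations of motion, multiply by $\dot u_3-\dot v_3$, and pass through Corollary \ref{formulazzaintparti} to reach the duality pairing $\langle\sigma-\tau,\dot p-\dot q\rangle$, whose sign is then controlled via the flow rule — this is what the paper does, and your mention of ``monotonicity of the normal cone'' points at the right idea but the route is necessarily adapted.
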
 

\begin{proof}
\textit{Step 1.} Let $(u^N,\sigma^N,p^N)$ be the solution of Theorem \ref{TheoremNHoff}. We want to pass to the limit as $N\rightarrow\infty$.
From \eqref{stimasigmaW1infty}, \eqref{4.48}, and \eqref{stimauN}, it follows that there exists
$$ (\sigma,u) \in H^1([0,T]; L^2(\Om; \Mtwo) \times  KL (\Om)) $$ 
with $u_3 \in H^2([0,T]; L^2(\om))$ such that, up to subsequences,
\begin{align} \label{sigmaNtosigma}
& \sigma^N \wto \sigma \quad \mbox{ weakly in } H^1([0,T]; L^2(\Om; \Mtwo)), \\
& u^N \wto u \quad \mbox{ weakly* in } H^1([0,T]; BD(\Om)),\label{5.9} \\ \label{u3Ntou3}
& \dot{u}_3^N \wto \dot{u}_3 \mbox{ weakly in } H^1([0,T]; L^2(\om)).
\end{align}
Condition (iii) then easily follows from \eqref{sigmaNtosigma} and \eqref{u3Ntou3}.
Moreover by \eqref{sigmaNtosigma} and \eqref{5.9}, for every $t \in [0,T]$, we have
\begin{align} 
& \sigma^N (t) \wto \sigma (t) \quad \mbox{ weakly in } L^2(\Om; \Mtwo),\label{5.11} \\
& u^N (t) \wto u (t) \quad \mbox{ weakly* in }  BD(\Om).\label{5.12}
\end{align}
Let us set $e(t) \doteq \mathbb A_r \sigma (t)$, $p(t) \doteq E u(t)-e(t)$ in $\Om$ and $p(t) \doteq (w(t)-u(t)) \odot \nu_{\partial \Om}$.
It is then clear that the initial conditions, and (i) and (ii) hold true. We now prove (iv). 
From  \eqref{stimasigmalinftylN}, for any Borel set $B\subset[0,T]\times\Om$ we infer by H\"older inequality
\begin{align}
 \int_B\frac{|\sigma^N|_r}{\alpha_0}dxdt\leq |B|^{\frac{N-1}{N}}\Big(\int_B\frac{|\sigma^N|_r^N}{\alpha_0^N}\Big)^{\frac{1}{N}}dxdt\leq \Big(\frac{C}{\alpha_0}\Big)^{\frac{1}{N}}|B|^{\frac{N-1}{N}}.
\end{align}
For every $\delta>0$ let $B_\delta$ be the Borel set defined as $B_\delta:=\{|\sigma|_r\geq \alpha_0(1+\delta)\}$. From the previous estimate we infer
\begin{align*}
 |B_\delta|(1+\delta)\leq\int_{B_\delta}\frac{|\sigma|_r}{\alpha_0}dxdt\leq \liminf_{N\rightarrow\infty}\int_{B_\delta}\frac{|\sigma^N|_r}{\alpha_0}dxdt\leq\liminf_{N\rightarrow\infty}\Big(\frac{C}{\alpha_0}\Big)^{\frac{1}{N}}|B_\delta|^{\frac{N-1}{N}}=|B_\delta|.
\end{align*}
Hence $|B_\delta|=0$ and by arbitrariness of $\delta$ we conclude (iv).

Let us now prove (v).
Let $\theta \in H^1([0,T]; L^2 (\Om; \Mtwo)) \cap L^\infty([0,T];L^N (\Om; \Mtwo))$ and let $\varphi \in C^2 (\bar{\om})$ with $\varphi \geq 0$. We test the first equality in \eqref{NHproblem} by $\varphi(\theta (t)-\sigma^N (t))$, namely
$$ \langle \mathbb{A}_r \dot{\sigma}^N (t)+ D \phi_N (\sigma^N (t))-E \dot{u}^N (t), \varphi(\theta (t)-\sigma^N (t)) \rangle=0. $$
By convexity of $\phi_N$ we infer
\begin{align} 
\langle \mathbb{A}_r \dot{\sigma}^N(t), \varphi (\sigma^N (t)-\theta(t)) \rangle+\int_\Om \varphi(x') \phi_N (\sigma^N (t,x)) dx \notag \\ \label{disug1flowrule}
 \leq \langle E \dot{u}^N (t), \varphi (\sigma^N (t)-\theta(t)) \rangle +\int_\Om \varphi(x') \phi_N (\theta (t,x)) dx,
\end{align}
for a.e. $t\in[0,T]$.
Now we choose $\theta=\sigma$ and $\varphi=1$ and we integrate on the time interval $[0,t]$, with $t\in[0,T]$. Since $\phi_N \geq 0$, rearranging terms and integrating by parts, we have
\begin{align}
&\frac{1}{2} \langle \mathbb{A}_r (\sigma^N (t)-\sigma (t), \sigma^N (t)-\sigma (t) \rangle+\frac{1}{2} \Vert \dot{u}_3^N (t)-\dot{u}_3 (t) \Vert_{L^2}^2 \nonumber\\
&\leq  \int_0^t \langle \sigma^N (s)-\sigma (s), E \dot{w}(s)-\mathbb{A}_r \dot{\sigma} (s) \rangle ds\nonumber\\
&\;\;+\int_0^t\langle \dot w_3-\dot u_3,\ddot u_3^N-\ddot u_3\rangle ds+\int_0^t \int_\Om \phi_N (\sigma (t,x)) dx dt. \label{strongconvformula}
\end{align}
From (iv) it follows that the last term on the right-hand side tends to $0$ as $N \to +\infty$.  Hence, taking the limit as $N \to +\infty$ we infer, thanks to \eqref{Acoercivity}, \eqref{sigmaNtosigma}, \eqref{u3Ntou3}, \eqref{5.11}, and \eqref{5.12},
\begin{align}
 &\sigma^N(t) \to \sigma(t) \quad\mbox{ strongly in }L^2(\Om; \Mtwo)\text{ for all }t\in[0,T],\\
 & \dot{u}_3^N(t) \to \dot{u}_3(t) \quad  \mbox{ strongly in } L^2(\om)\text{ for all }t\in[0,T].
\end{align}
Furthermore, integrating expression \eqref{strongconvformula} in time on $[0,T]$ we also conclude
\begin{align} \label{strongconvsigmaN}
& \sigma^N \to \sigma \quad  \mbox{ strongly in } L^2([0,T]; L^2(\Om; \Mtwo)), \\ \label{strongconvuN}
& \dot{u}_3^N \to \dot{u}_3 \quad  \mbox{ strongly in } L^2([0,T]; L^2(\om)).
\end{align}
We now go back to  \eqref{disug1flowrule} and choose an arbitrary $\theta \in \mathcal{K}_r (\Om) \cap \Sigma (\Om)$ independent of time and $\varphi\in C^2(\bar\omega)$ with $\varphi=0$ in a neighbourhood of $\gn$. Since $\varphi (\dot{u}^N(t)-\dot{w}(t))=0$ on $\partial \om \times \left( -\frac{1}{2},\frac{1}{2} \right)$, parts integration leads to
\begin{align*}
\langle E \dot{u}^N (t)&-E \dot{w} (t), \varphi (\sigma^N (t)-\theta) \rangle= \int_\om \varphi(x') (\dot{\bar{u}}^N (t,x')-\dot{\bar{w}}(t,x')) \cdot  (\Div \bar\theta(x') +f(t,x')) dx' \\
&-\int_\om \big(\nabla \varphi (x') \odot (\dot{{u}}^N (t,x')-\dot{{w}}(t,x'))\big): ( {\sigma}^N (t,x')-{\theta}(x'))dx'\\
&+\int_\om \varphi(x') (\dot{u}_3^N (t,x'))-\dot{w}_3 (t,x'))(g(t,x')- \ddot{u}_3^N(t,x')) dx'\\
&+\frac{1}{12} \int_\om \varphi(x') (\dot{u}_3^N (t,x'))-\dot{w}_3 (t,x'))\Div \Div \hat{\theta}(x') dx' \\
&+\frac{1}{12} \int_\om (\dot{u}_3^N (t,x')- \dot{w}_3 (t,x'))D^2 \varphi(x'): (\hat{\sigma}^N (t,x')-\hat{\theta}(x'))dx'.
% &+\frac{1}{6} \int_\om ( \nabla \dot{u}_3^N (t,x')- \nabla \dot{w}_3 (t,x')) \odot \nabla \varphi(x'): (\hat{\sigma}^N (t,x')-\hat{\theta}(x'))dx'.
\end{align*}
We substitute this expression into \eqref{disug1flowrule}, then we integrate with respect to time on an arbitrary time interval $[t_1,t_2]$,  and eventually we pass to the limit as $N \to+ \infty$. Notice that by \eqref{stimauN} we have $\dot u^N\rightharpoonup \dot u$ weakly in $L^2([0,T];L^2(\Om;\R^2))$, so that thanks to \eqref{u3Ntou3}, \eqref{strongconvsigmaN},  and \eqref{strongconvuN}, we arrive to
\begin{align*}
\int_{t_1}^{t_2}& \langle \mathbb{A}_r \dot{\sigma} (t)-E \dot{w}(t), \varphi (\sigma (t)- \theta) \rangle dt \\
&\leq \int_{t_1}^{t_2} \int_\om \varphi (\dot{\bar{u}} (t,x')-\dot{\bar{w}}(t,x')) \cdot  (\Div \bar\theta (x')- f(t,x')) dx'dt\\ &-\int_{t_1}^{t_2} \int_\om \big(\nabla \varphi(x') \odot (\dot{{u}} (t,x')-\dot{w}(t,x'))\big): ( {\sigma} (t,x')-{\theta}(x'))dx'dt\\
&+\int_{t_1}^{t_2}\int_\om \varphi(x') (\dot{u}_3^N (t,x'))-\dot{w}_3 (t,x'))(g(t,x')- \ddot{u}_3^N(t,x')) dx'dt\\
&+\frac{1}{12} \int_{t_1}^{t_2} \int_\om \varphi(x') (\dot{u}_3 (t,x')-\dot{w}_3 (t,x')) \Div \Div \hat{\theta}(x') dx'dt \\
&+\frac{1}{12} \int_{t_1}^{t_2} \int_\om (\dot{u}_3 (t,x')- \dot{w}_3 (t,x'))D^2 \varphi(x'): (\hat{\sigma} (t,x')-\hat{\theta}(x'))dx'dt.
% &+\frac{1}{6} \int_{t_1}^{t_2} \int_\om ( \nabla \dot{u}_3 (t,x')- \nabla \dot{w}_3 (t,x')) \odot \nabla \varphi(x'): (\hat{\sigma} (t,x')-\hat{\theta}(x'))dx'dt.
\end{align*}
 Exploiting the integration by parts formula Corollary \ref{formulazzaintparti} and the fact that $-\frac{1}{12}\Div\Div \hat \sigma=g-\ddot u_3$, $-\Div\bar \sigma=f$, we see that this last expression is equivalent to
\begin{equation} \label{disugflowrule2}
\int_{t_1}^{t_2} \int_{\Om \cup \Gd} \varphi d[(\theta-\sigma (t)): \dot{p}(t)]dt \leq 0,
\end{equation}
for every $\theta \in \mathcal{K}_r (\Om) \cap \Sigma (\Om)$ and every $\varphi \in C^2 (\bar{\om})$, $\varphi \geq 0$, with $\varphi=0$ in a neighbourhood of $\gn$. For every $\delta > 0$ let now $\varphi_\delta \in C^2 (\bar{\om})$ be such that $0 \leq \varphi_\delta \leq 1$, $\varphi_\delta=0$ on the set $\lbrace x' \in \bar{\om}: \mathrm{dist}(x',\gn) < \delta \rbrace$ and $\varphi_\delta=1$ on the set $\lbrace x' \in \bar{\om}: \mathrm{dist}(x',\gn) > 2 \delta \rbrace$. Using $\varphi_\delta$ as a test function in \eqref{disugflowrule2} and sending $\delta$ to zero, we obtain
\begin{equation*} 
\int_{t_1}^{t_2}  \langle \theta-\sigma (t), \dot{p}(t) \rangle dt \leq 0.
\end{equation*}
By the arbitrariness of $t_1$ and $t_2$ we can localize such expression in time and then using \cite[Proposition 7.8]{DavMor} we infer \eqref{flow}, by arbitrariness of $\theta$.

\medbreak
\noindent\textit{Step 2.} Conditions  \eqref{regestimatesigmaal}, \eqref{regestimatesigma3}, and \eqref{reg_Du3finale} follow from \eqref{regestimatesigmaalN}, \eqref{regestimatesigma3N}, \eqref{reg_Du3}, \eqref{sigmaNtosigma}, and \eqref{u3Ntou3}.
Furthermore, if the force $f$ is independent of time $t$, then we can use Theorem \ref{TheoremNHoff2} to deduce that \eqref{higherregularity} holds true.

\medbreak
\noindent\textit{Step 3.}
To conclude the proof of the Theorem it remains to show that $\sigma$ and $u_3$ are unique. The proof is very similar to the uniqueness part of Lemma \ref{Lemmalambda}, but we sketch the lines for completeness.
Let $(u, e, p)$ and $(v,\eta,q)$ be two solutions, and let $\sigma(t)\doteq \mathbb{C}_r e(t)$ and $\tau(t)\doteq\mathbb{C}_r\eta(t)$. Subtracting the two equations of motion for $u_3$ and $v_3$ leads to 
$$ 
\ddot{u}_3(t)- \ddot{v}_3(t)-\frac{1}{12} \Div\,\Div\, (\hat\sigma(t)- \hat\tau(t))=0 \quad \text{ in } \omega
$$
for a.e.\ $t\geq0$. Multiplying this equation by $\dot{u}_3(t)- \dot{v}_3(t)$ and integrating on $[0, T]\times \omega$ yields
\begin{align} 
& \int_0^T \int_\omega (\ddot{u}_3(t,x')- \ddot{v}_3(t,x'))(\dot{u}_3(t.x')-\dot{v}_3(t,x'))\,dx'\,dt \\ \label{uniqu3eq}
& -\frac{1}{12} \int_0^T \int_\omega \Div\,\Div\, (\hat\sigma(t,x')- \hat\tau(t,x'))(\dot{u}_3(t,x')- \dot{v}_3(t,x'))\,dx'\,dt=0.
\end{align}
Since $\dot{u}_3 (0)=\dot{v}_3 (0)$, we have 
\begin{equation}\label{200}
\int_0^T \int_\omega (\ddot{u}_3(t,x')- \ddot{v}_3(t,x'))(\dot{u}_3(t,x')-\dot{v}_3(t,x'))\,dx'\,dt 
=\frac{1}{2} \| \dot{u}_3 (T)- \dot{v}_3 (T) \|_{L^2}^2 .
\end{equation}
On the other hand, by Corollary \ref{formulazzaintparti}, \eqref{limit-equilibrium}, and \eqref{tangentialeqmotion}, we obtain
\begin{align} 
&-\frac{1}{12} \int_0^T \int_\omega \Div\,\Div\, (\hat\sigma(t,x')- \hat\tau(t,x'))(\dot{u}_3(t,x')- \dot{v}_3(t,x'))\,dx'\,dt
\notag \\ \label{dualityformula}
&= \int_0^T \int_\Omega (\sigma(t,x)-\tau(t,x)): (\dot{e}(t,x)-\dot{\eta}(t,x))\,dx\,dt 
+\int_0^T \langle \sigma(t)-\tau(t),\dot{p}(t)-\dot{q}(t) \rangle \,dt,
\end{align}
where we have also used that $(\dot u(t) -\dot v(t), \dot e(t)-\dot\eta(t), \dot p(t)-\dot q(t))\in \mathcal{A}_{KL} (0)$ for a.e. $t\geq0$.
Since $e(0)=\eta(0)$, we have 
\begin{equation}
\int_0^T \int_\Omega (\sigma(t,x)-\tau(t,x)): (\dot{e}(t,x)-\dot{\eta}(t,x))\,dx\,dt 
=\frac{1}{2} \langle \mathbb{A}_r( \sigma(T)-\tau(T)), \sigma(T)-\tau(T) \rangle. 
\end{equation}
Moreover, using (v) and the fact that $\tau(t)\in K_r$ a.e.\ in $\Omega$, we infer that 
$$  
\langle \sigma(t)-\tau(t),\dot{p}(t) \rangle \geq 0
$$ 
for a.e.\ $t\geq0$. Similarly,
$$  
\langle \tau(t)-\sigma(t),\dot{q}(t) \rangle \geq 0
$$
for a.e.\ $t\geq0$. Summing up the previous inequalities and integrating in time yields 
\begin{equation} \label{signofp}
\int_0^T \langle \sigma(t)-\tau(t),\dot{p}(t)-\dot{q}(t) \rangle \,dt \geq 0. 
\end{equation} 
Gathering \eqref{uniqu3eq}--\eqref{signofp} we deduce that
$$ 
\frac{1}{2} \| \dot{u}_3 (T)- \dot{v}_3 (T) \|_{L^2}^2 +\frac{1}{2} \langle \mathbb{A}_r (\sigma(T)-\tau(T)),\sigma(T)-\tau(T)\rangle\leq 0. 
$$
By \eqref{Acoercivity} we conclude that $\dot{u}_3=\dot{v}_3$, hence $u_3=v_3$, and that $\sigma=\tau$.

\end{proof}

We conclude with two observations.
\begin{remark}
As pointed out in \cite[Remark 5.3]{MaggianiMora}, one cannot expect in general the uniqueness of the horizontal components of the displacement $u$ and of the plastic strain $p$.
\end{remark}
\begin{remark}
 Let us emphasize that from the condition $u\in H^1([0,T]; KL (\Om))$ it easily follows that $\dot u_3\in L^2([0,T]; H^1 (\om))$. This does not imply the regularity stated in \eqref{reg_Du3finale} since the latter involves an $L^\infty$-norm in time. Notice that the fact that $\ddot u_3\in L^2([0,T]; L^2(\om))$ only implies that $\dot u_3\in L^\infty([0,T]; L^2 (\om))$ without any additional spatial regularity.
\end{remark}

\subsubsection*{Acknowledgements}
P.G. and R.S. are supported by FCT--Fundação para a Ciência e
Tecnologia, under the project UID/MAT/04561/2013. The authors have been also partially supported by the
Gruppo Nazionale per l’Analisi Matematica, la Probabilità e le loro Applicazioni (GNAMPA) of the Istituto Nazionale di Alta Matematica (INdAM). 
We are grateful to Maria Giovanna Mora for suggesting the problem and for the helpful discussions.

\end{document}